\newtheorem{theorem}{Theorem}
\newtheorem{proposition}{Proposition}
\newtheorem{lemma}[proposition]{Lemma}
\theoremstyle{remark}
\newtheorem{remark}{Remark}
\DeclarePairedDelimiter{\norm}{\lVert}{\rVert}
\newcommand{\psld}[2]{\left( #1,#2 \right)_{2}}
\newcommand{\dual}[2]{\left\langle #1,#2 \right\rangle}
\newcommand{\eps}{\varepsilon}
\newcommand{\N}{\mathbb{N}}
\newcommand{\R}{\mathbb{R}}
\DeclareMathOperator{\ModOp}{Mod_{op}}
\DeclareMathOperator{\Mod}{Mod}
\renewcommand{\Re}{\mathrm{Re}}
\renewcommand{\Im}{\mathrm{Im}}
\newcommand{\be}{\begin{equation}}
\newcommand{\ee}{\end{equation}}
\newcommand{\bea}{\begin{eqnarray}}
\newcommand{\eea}{\end{eqnarray}}
\newcommand{\bee}{\begin{eqnarray*}}
\newcommand{\eee}{\end{eqnarray*}}
\def\pa{\partial}
\def\l{\lambda}
\def\fref{\eqref}
\def\lsl{\frac{\lambda_s}{\lambda}}
\begin{document}

\title[Minimal mass blow-up for double power NLS]{Minimal mass blow up solutions for a double power nonlinear Schr\"odinger equation}

\subjclass[2010]{35Q55 (35B35 35B44)}
\keywords{Nonlinear Schrödinger equations,blow-up}

\date\today

\author{Stefan Le Coz}
\author{Yvan Martel}
\author{Pierre Raphaël}

\address{Institut de Math\'ematiques de Toulouse,
\newline\indent
Universit\'e Paul Sabatier
\newline\indent
118 route de Narbonne, 31062 Toulouse Cedex 9
\newline\indent
France}
\email{slecoz@math.univ-toulouse.fr}

\address{
Centre de Mathématiques Laurent Schwartz
\newline\indent
\'Ecole Polytechnique
\newline\indent
91 128 Palaiseau cedex
\newline\indent
France}
\email{yvan.martel@polytechnique.edu}

\address{Laboratoire J.A. Dieudonné
\newline\indent
Université de Nice Sophia Antipolis, et Institut Universitaire
de France
\newline\indent
Parc Valrose, 
06108 Nice Cedex 02
\newline\indent
France}
\email{praphael@unice.fr}
  
\subjclass[2010]{35Q55 (35B44)}

\keywords{blow-up,  nonlinear Schrödinger equation, double power nonlinearity, minimal mass, critical exponent}
\begin{abstract}
We consider a nonlinear Schrödinger equation with double power nonlinearity $$i\pa_tu +\Delta u+|u|^{\frac 4d}u+\epsilon |u|^{p-1}u=0, \ \ \epsilon\in\{-1,0,1\}, \ \ 1<p<\frac 4d$$ in $\R^d$ ($d=1,2,3$). Classical variational arguments ensure that $H^1(\R^d)$ data with $\|u_0\|_{2}<\|Q\|_{2}$  lead to global in time solutions, where $Q$ is the ground state of the mass critical problem ($\epsilon=0$).  We are interested by the threshold dynamic $\|u_0\|_{2}=\|Q\|_{2}$ and in particular by the existence of finite time blow up minimal solutions. For $\epsilon=0$, such an object exists thanks to the explicit conformal symmetry,  and is in fact unique from the seminal work \cite{Me93}. For $\epsilon=-1$, simple variational arguments ensure that minimal mass data lead to global in time solutions. We investigate in this paper the case $\epsilon=1$, exhibiting a new class of minimal blow up solutions with blow up rates deeply affected by the double power nonlinearity. The analysis adapts the recent approach \cite{
RaSz11} for 
the construction of minimal blow up elements.
\end{abstract}

\maketitle
 

\section{Introduction}

We consider the following double power nonlinear Schrödinger equation in $\R^d$ 
\begin{equation}
\label{eq:nlseps}
{\rm (NLS)}\ \ \left\{\begin{array}{ll} i\pa_tu+\Delta u+|u|^{\frac{4}{d}}u+\epsilon |u|^{p-1}u=0,\\ u_{|t=0}=u_0,\end{array}\right. \ \ 1<p<1+\frac 4d, \ \ \epsilon\in \{-1,0,1\}.
\end{equation}
This model corresponds to a subcritical perturbation of the classical mass critical problem $\epsilon=0$ which  rules out the scaling symmetry of the problem. It is well-known (see e.g \cite{Ca03} and the references therein) that for any  $u_0\in H^1(\R^d)$, there exists a unique maximal solution $u\in\mathcal{C}((-T_\star,T^\star),H^1(\R^d))$ $\cap$ $\mathcal{C}^1((-T_\star,T^\star),H^{-1}(\R^d))$ of \eqref{eq:nlseps}. Moreover, the mass (i.e. $L^2$ norm) and energy $E$ of the solution are conserved by the flow where:
\begin{equation*}
E(u)=\frac12\norm{\nabla u}_2^2-\frac{1}{2+\frac 4d}\norm{u}_{2+\frac 4d}^{2+\frac 4d}-\epsilon \frac{1}{p+1}\norm{u}_{p+1}^{p+1}.
\end{equation*}
Moreover, there holds the blow up criterion: 
\be
\label{blowoucrit}
 T^\star<+\infty\ \ \mbox{implies}\ \ \lim_{t\uparrow T^\star}\norm{\nabla u(t)}_2=+\infty.
 \ee
In this paper, we are interested in the derivation of a sharp global existence criterion for \eqref{eq:nlseps} in connection with the existence of {\it minimal mass blow up solutions} of \eqref{eq:nlseps}.

\subsection{The mass critical problem} Let us briefly recall the structure of the mass critical problem $\epsilon=0$. In this case, the scaling symmetry $$u_\l(t,x)=\l^{\frac d2}u(\l^2 t, \l x)$$ acts on the set of solutions and leaves the mass invariant $$\|u_\l(t,\cdot)\|_{2}=\|u(\l^2t, \cdot)\|_{2}.$$ From variational argument \cite{We83}, the unique (\cite{BL83,Kw89}) up to symmetry ground state solution to 
\[
-\Delta Q+Q-|Q|^{\frac4d}Q=0, \ \ Q\in H^1(\R^d), \ \ Q>0, \ \ \mbox{$Q$ radial}
\]
attains the best constant in the Gagliardo-Nirenberg inequality $$\|u\|^{2+\frac 4d}_{{2+\frac 4d}}\leq C\|u\|_{2}^{\frac 4d}\|\nabla u\|_{2}^2,$$
so that
\be
\label{gagenergy}
\forall u\in H^1(\R^d), \ \ E_{\rm crit}=\frac12\norm{\nabla u}_2^2-\frac{1}{2+\frac 4d}\norm{u}_{2+\frac 4d}^{2+\frac 4d}\geq \frac 12\|\nabla u\|_{2}^2\left[1-\left(\frac{\|u\|_{2}}{\|Q\|_{2}}\right)^{\frac 4d}\right].
\ee
Together with the conservation of mass and energy and the blow up criterion \eqref{blowoucrit}, this implies the global existence of all solutions with data $\|u_0\|_{2}<\|Q\|_{2}$. In fact, there holds scattering, see \cite{dodson} and references therein.\\
At the threshold $\|u_0\|_{2}=\|Q\|_{2}$, the pseudo-conformal symmetry
\be
\label{pseduocno}
\frac{1}{|t|^{\frac d2}}u\left(\frac 1t,\frac{x}{t}\right)e^{i\frac{|x|^2}{4t}}
\ee applied to the solitary wave solution $u(t,x)=Q(x)e^{it}$  yields the existence of the following explicit minimal  blow up solution
\be\label{st}
S(t,x) = \frac{1}{|t|^{\frac d2}}Q\left(\frac {x}{|t|} \right)e^{-i\frac{|x|^2}{4|t|}}e^{\frac {i}{|t|}},\quad
\|S(t)\|_2= \|Q\|_{2}, \quad \|\nabla S(t)\|_{2}\mathop{\sim}_{t\sim 0^-}\frac 1{|t|}.
\ee
From \cite{Me93}, minimal blow up elements are {\it classified} in $H^1(\R^d)$ in the following sense $$\|u(t)\|_{2}=\|Q\|_{2}\ \ \mbox{and}\ \ T^*<+\infty\ \ \mbox{imply}\ \ u\equiv S$$ up to the symmetries of the flow. Note that the minimal blow up dynamic \eqref{st} can be extended to the super critical mass case $\|u_0\|_{2}>\|Q\|_{2}$ (see  \cite{BW}) and that it corresponds to an unstable threshold dynamics between global in time scattering solutions and finite time blow up solutions in the {\it stable} blow up regime 
\be
\label{logloglaw}
\|\nabla u(t)\|_{2} \mathop{\sim}_{t\sim T^*}\sqrt{\frac{\log|\log|T^*-t||}{T^*-t}}.
\ee
We refer to \cite{MeRaSz13} and references therein for an overview of the existing literature for the $L^2$  critical blow up problem. 

\subsection{\texorpdfstring{The case $\epsilon=-1$}{The case epsilon=-1}} 
Let us now consider the case of a defocusing perturbation. First, there are no solitary waves with subcritical mass $\|u_0\|_{2}<\|Q\|_{2}$ from a standard Pohozaev integration by parts argument. At the threshold, we claim:

\begin{lemma}[Global existence at threshold for $\epsilon=-1$]
\label{lemmathreshodl}
Let $\epsilon=-1$.
Let $u_0\in H^1(\R^d)$ with  $\|u_0\|_{2}=\|Q\|_{2}$, then  the solution of \eqref{eq:nlseps} is global and bounded in $H^1(\R^d)$.
\end{lemma}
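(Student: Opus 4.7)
The plan is to argue by contradiction, exploiting two structural features of the defocusing case $\epsilon=-1$: the extra nonlinear term contributes \emph{non-negatively} to the energy, and it scales with a strictly smaller power of the natural length scale than the mass-critical term. Concretely, suppose the solution is not globally bounded in $H^1(\R^d)$; by \eqref{blowoucrit} applied in case of finite-time blow-up, and directly otherwise, there exists a sequence of times $t_n$ with $\mu_n:=\norm{\nabla u(t_n)}_2\to+\infty$.

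Rescale by setting $v_n(x):=\mu_n^{-d/2}u\bigl(t_n,x/\mu_n\bigr)$, so that $\norm{v_n}_2=\norm{u_0}_2=\norm{Q}_2$ and $\norm{\nabla v_n}_2=1$. A direct scaling computation rewrites the conserved energy as
\begin{equation*}
E(u_0)\;=\;E(u(t_n))\;=\;\mu_n^{2}\,E_{\rm crit}(v_n)\;+\;\frac{1}{p+1}\,\mu_n^{d(p-1)/2}\,\norm{v_n}_{p+1}^{p+1}.
\end{equation*}
Because $\norm{v_n}_2=\norm{Q}_2$, the sharp Gagliardo-Nirenberg inequality \eqref{gagenergy} yields $E_{\rm crit}(v_n)\geq 0$, while the last term is non-negative since $\epsilon=-1$. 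Boundedness of the left-hand side together with $\mu_n\to\infty$ forces $E_{\rm crit}(v_n)\to 0$, and in particular
\begin{equation*}
\norm{v_n}_{2+\frac{4}{d}}^{2+\frac{4}{d}}\;\longrightarrow\;1+\tfrac{2}{d}\;>\;0.
\end{equation*}

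The crux is then a lower bound $\norm{v_n}_{p+1}\geq\delta>0$. Since $2<p+1<2+\frac{4}{d}$, one can pick an exponent $q\in(2+\frac{4}{d},\infty]$ with $H^1(\R^d)\hookrightarrow L^q(\R^d)$ (in $d=1$ take $q=\infty$; in $d=2$ any finite $q$ above $4$; in $d=3$ take $q=6$). Hölder interpolation gives
\begin{equation*}
\norm{v_n}_{2+\frac{4}{d}}\;\leq\;\norm{v_n}_{p+1}^{\alpha}\,\norm{v_n}_{q}^{1-\alpha}
\end{equation*}
for some $\alpha\in(0,1)$; the factor $\norm{v_n}_q$ is uniformly bounded by Sobolev from $\norm{v_n}_2+\norm{\nabla v_n}_2\leq 1+\norm{Q}_2$, while the left-hand side is bounded below by the previous step, so $\liminf_n\norm{v_n}_{p+1}>0$.

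Plugging this back into the energy identity gives $E(u_0)\geq\frac{\delta^{p+1}}{p+1}\,\mu_n^{d(p-1)/2}\to+\infty$ because $p>1$ implies $d(p-1)/2>0$, contradicting energy conservation. Hence $\norm{\nabla u(t)}_2$ stays bounded on the maximal interval, and \eqref{blowoucrit} forces $T_\star=T^\star=+\infty$. The step I expect to require the most care is the non-vanishing lower bound on $\norm{v_n}_{p+1}$; the subcritical assumption $p<1+\frac{4}{d}$ is used twice here, first through the strict inequality $d(p-1)/2<2$ that makes the subcritical term subdominant in the scaling identity, and second through $p+1<2+\frac{4}{d}$ which permits the interpolation against a Sobolev-admissible higher exponent $q$.
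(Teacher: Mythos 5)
Your argument is correct, and it reaches the contradiction by a genuinely more elementary route than the paper's. Both proofs share the same skeleton: argue by contradiction along a sequence with $\mu_n=\|\nabla u(t_n)\|_2\to\infty$, rescale to a sequence $v_n$ with critical mass and fixed gradient size, use the sharp Gagliardo--Nirenberg bound \eqref{gagenergy} at the mass threshold to force $E_{\rm crit}(v_n)\to 0$, and then contradict the a priori bound $\frac{1}{p+1}\|u(t)\|_{p+1}^{p+1}\le E_0$ coming from energy conservation and the favorable sign of the defocusing term, since $\|u(t_n)\|_{p+1}^{p+1}=\mu_n^{d(p-1)/2}\|v_n\|_{p+1}^{p+1}$ diverges once $\|v_n\|_{p+1}$ is bounded below. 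Where you differ is in how that non-vanishing is obtained: the paper invokes the standard concentration-compactness/variational characterization of $Q$ (as in \cite{MeRa05,We83}) to get $v_n(\cdot-x_n)e^{i\gamma_n}\to Q$ in $H^1(\R^d)$, hence $\|v_n\|_{p+1}\to\|Q\|_{p+1}>0$, whereas you only extract $\|v_n\|_{2+\frac4d}\gtrsim 1$ from $E_{\rm crit}(v_n)\to 0$ together with $\|\nabla v_n\|_2=1$, and then lower-bound $\|v_n\|_{p+1}$ by H\"older interpolation against a Sobolev-admissible exponent $q>2+\frac 4d$ (available for $d\le 3$ as you note, and in fact for all $d\ge 3$ with $q=\frac{2d}{d-2}$). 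Your route avoids the compactness machinery and the translation/phase symmetries entirely and does not need to identify the limiting profile, which is all the lemma requires; the paper's route yields the stronger structural fact that the rescaled solution converges to $Q$, in the spirit of the profile analysis in the rest of the paper, but that extra information is not used here. One small remark: your claim that subcriticality enters through $\frac{d(p-1)}{2}<2$ making the perturbative term ``subdominant'' is not actually needed --- the final contradiction only uses $\frac{d(p-1)}{2}>0$; subcriticality is genuinely used only through $p+1<2+\frac 4d$ in the interpolation step.
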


The proof follows from standard concentration compactness argument, see Appendix \ref{appendixa}. The global existence criterion of Lemma \ref{lemmathreshodl} is sharp in the sense that for all $\alpha^*>0$, we can build an $H^1(\R^d)$ finite time blow up solution to \eqref{eq:nls} with $\|u_0\|_{2}=\|Q\|_{2}+\alpha^*$ and blow up speed given by the log-log law \eqref{logloglaw}. This is  a consequence of the strong structural stability of the log log regime and the proof would follow the lines of \cite{PlRa07,Ra05,RDuke}.

\subsection{\texorpdfstring{The case $\epsilon=1$}{The case epsilon=1}} We now turn to the case $\epsilon=1$ for the rest of the paper, i.e. we consider the model
\begin{equation}
\label{eq:nls}
 i\pa_tu+\Delta u+|u|^{\frac{4}{d}}u+  |u|^{p-1}u=0 \quad \hbox{where}\quad 1<p<1+\frac 4d.
\end{equation}
 First, from mass and energy conservation,    using \eqref{gagenergy} and \eqref{GNbis}, $H^1(\R^d)$ solutions with $\|u_0\|_{2}<\|Q\|_{2}$ are global and bounded in $H^1(\R^d)$. However,   large time scattering is not true in general, even for small $L^2$ solutions, since there exist arbitrarily small solitary waves.

\begin{lemma}[Small solitary waves]
\label{smallsolitary}
For all $M\in (0,\|Q\|_{2})$, there exists $\omega(M)>0$ and a Schwartz  radially symmetric solution of
 $$\Delta Q_M-\omega(M) Q_M+Q_M^{1+\frac 4d}+Q_M^p=0, \ \ \|Q_M\|_{2}=M.$$
\end{lemma}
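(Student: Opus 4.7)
The plan is to obtain $Q_M$ as a minimizer of the energy $E$ under the mass constraint $\|u\|_{2}=M$, restricting by Schwarz symmetrization to non-negative radial functions, and to identify $\omega(M)>0$ as the associated Lagrange multiplier. Set $e(M) := \inf\{E(u) : u\in H^1(\R^d),\ \|u\|_{2}=M\}$.

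First I would check that $-\infty<e(M)<0$. The lower bound uses the sharp Gagliardo--Nirenberg inequality \eqref{gagenergy} (whose kinetic factor $1-(M/\|Q\|_{2})^{4/d}$ is positive since $M<\|Q\|_{2}$), combined with the subcritical bound $\|u\|_{p+1}^{p+1} \leq C(M)\|\nabla u\|_{2}^{d(p-1)/2}$ (where $d(p-1)/2<2$), absorbed via Young's inequality. The strict negativity comes from testing $E$ on the mass-preserving dilation $u_\lambda(x)=\lambda^{d/2}u_0(\lambda x)$ and letting $\lambda\downarrow 0$: the $L^{p+1}$ contribution, scaling like $\lambda^{d(p-1)/2}$, dominates the $\lambda^{2}$ contributions of the kinetic and mass-critical terms, with negative sign.

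Next, I would extract a minimizer by concentration-compactness in $H^1_{\mathrm{rad}}(\R^d)$. A non-negative radial minimizing sequence $(u_n)$ is bounded in $H^1$ by the coercivity above and, after extraction, $u_n\rightharpoonup Q_M$ weakly in $H^1$, while Strauss' compact embedding $H^1_{\mathrm{rad}}(\R^d)\hookrightarrow L^q(\R^d)$ ($2<q<\tfrac{2d}{d-2}$, any $q>2$ if $d=1,2$) gives strong convergence in $L^{2+4/d}\cap L^{p+1}$. Vanishing is ruled out since $E(Q_M)\leq e(M)<0$, and the heart of the matter is to show $\|Q_M\|_{2}=M$: if $M'=\|Q_M\|_{2}<M$, dilating to $v=(M/M')Q_M$ (which satisfies $\|v\|_{2}=M$) and using that $p+1,\,2+4/d > 2$ together with $E(Q_M)<0$ yields
\[
E(v) < (M/M')^2 E(Q_M) < E(Q_M) \leq e(M),
\]
contradicting $E(v)\geq e(M)$. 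This dichotomy exclusion is the main technical point, but it goes through cleanly precisely because $e(M)$ is strictly negative and because both nonlinear terms are supercritical relative to the $L^2$-preserving dilation. Strong $H^1$ convergence then follows from equality of norms.

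Finally, $Q_M$ weakly solves $\Delta Q_M-\omega Q_M+Q_M^{1+4/d}+Q_M^{p}=0$ for some Lagrange multiplier $\omega=\omega(M)\in\R$. To check $\omega>0$, I would combine the identity obtained by multiplying by $Q_M$ with the Pohozaev identity (multiplication by $x\cdot\nabla Q_M$), then solve the resulting $2\times 2$ linear system for $\omega$:
\[
\omega\|Q_M\|_{2}^{2} = \frac{2}{d+2}\|Q_M\|_{2+4/d}^{2+4/d} + \frac{2(p+1)-d(p-1)}{2(p+1)}\|Q_M\|_{p+1}^{p+1}.
\]
Both coefficients on the right are positive for $d\in\{1,2,3\}$ and $1<p<1+4/d$ (the inequality $d(p-1)<2(p+1)$ being automatic), hence $\omega>0$. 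Standard elliptic bootstrap then upgrades $Q_M$ to $C^\infty(\R^d)$, while $\omega>0$ delivers the exponential decay $|Q_M(x)|\lesssim e^{-\sqrt{\omega}|x|}$ via the usual Agmon-type estimate, so $Q_M$ is Schwartz as claimed.
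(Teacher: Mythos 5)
Your proposal follows essentially the same route as the paper's Appendix B: constrained minimization of $E$ on the sphere $\|u\|_2=M$ in the radial class, with $-\infty<I_M<0$ obtained exactly as in the paper, compactness from the radial Sobolev embedding, and the sign of the Lagrange multiplier from a Pohozaev-type identity (your explicit formula for $\omega\|Q_M\|_2^2$ is correct, and both coefficients are indeed positive since $d(p-1)<4<2(p+1)$). Your exclusion of mass loss via $v=(M/M')Q_M$ and $E(v)\leq (M/M')^2E(Q_M)<E(Q_M)$ is a clean and valid variant of the paper's argument, which instead establishes strict monotonicity of $M\mapsto I_M$ through the scaling $u_\lambda=\lambda^{2/(p-1)}u(\lambda\cdot)$; both rest on the same negativity of the infimum. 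The one inaccuracy is your claim that $H^1_{\mathrm{rad}}(\R^d)\hookrightarrow L^q(\R^d)$ compactly "for any $q>2$ if $d=1,2$": the Strauss compactness genuinely fails for $d=1$ (even $H^1$ functions on $\R$ gain no decay from radiality), so in that dimension you need a full concentration--compactness argument with strict subadditivity of $M \mapsto I_M$ to rule out dichotomy — which is exactly the caveat the paper itself makes by restricting its written proof to $d\geq 2$. For $d\geq 2$ your argument is complete.
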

The proof follows from classical variational methods, see Appendix \ref{appendixb}.

The main result of this paper is the existence of a minimal mass blow up solution for \eqref{eq:nls}, in contrast with the defocusing case $\epsilon=-1$.

\begin{theorem}[Existence of a minimal blow up element]
\label{thm:1}
Let $d=1,2,3$ and $1<p<1+\frac{4}{d}$. Then for all energy level $E_0\in \R$, there exist $t_0<0$ and a radially symmetric Cauchy data $u(t_0)\in H^1(\R^d)$ with $$\|u(t_0)\|_{2}=\|Q\|_{2}, \ \ E(u(t_0))=E_0,$$ such that the corresponding solution $u(t)$ of \eqref{eq:nls} blows up at time $T^*=0$ with speed:
\be
\label{blowupspeed}
\|\nabla u(t)\|_{2}=\frac{C(p)+o_{t\uparrow 0}(1)}{|t|^{\sigma}}
\ee
for some universal constants $$ \quad \sigma=\frac{4}{4+d(p-1)}\in \left(\tfrac 12 ,1\right), \ \ C(p)>0.$$
\end{theorem}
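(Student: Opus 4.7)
The plan is to adapt the backward construction of minimal-mass blow-up elements of \cite{RaSz11} to the new self-similar scaling produced by the subcritical perturbation $|u|^{p-1}u$. I seek a modulated decomposition
\[
u(t,x)=\frac{1}{\l(t)^{d/2}}\bigl(Q_{b(t)}+\eps\bigr)\!\left(t,\frac{x-x(t)}{\l(t)}\right)e^{i\gamma(t)},
\]
where $Q_b$ is an approximate blow-up profile, $\eps$ is a small error, and the modulation parameters $(\l,b,\gamma,x)$ are fixed by orthogonality. For a sequence $t_n\uparrow 0$, Cauchy data built from $Q_{b(t_n)}$ will be prescribed, \eqref{eq:nls} solved backward to a fixed time $T_0<0$ independent of $n$, and a minimal blow-up solution extracted by $H^1$-compactness.

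The profile $Q_b$ will be constructed as a formal expansion in the two small parameters $b$ and $\l^\alpha$, with $\alpha=2-\frac{d(p-1)}{2}>0$ being the exponent by which the subcritical nonlinearity is suppressed under the critical rescaling. The leading critical corrector $T_1$ is the standard pseudoconformal one, solving $L_+T_1=-\Lambda Q$, while a new corrector $R$ of size $\l^\alpha$ satisfies $L_+R=-Q^p$ and absorbs the rescaled subcritical term; both equations are solvable in the radial class by the standard spectral theory of $L_+$. Projecting \eqref{eq:nls} on the critical symmetry modes after inserting $Q_{b(t)}$ yields the formal modulation ODE system
\be\label{formallaw}
\frac{\l_s}{\l}+b=0,\qquad b_s+c_p\l^{\alpha}=0,\qquad \frac{ds}{dt}=\frac{1}{\l^2},
\ee
with an explicit $c_p>0$. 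The self-similar solution of \eqref{formallaw} reads $\l_\infty(t)\sim C(p)|t|^\sigma$, $b_\infty(t)\sim C'|t|^{2\sigma-1}$ with $\sigma=4/(4+d(p-1))\in(\tfrac12,1)$, which selects both the exponent and the constant $C(p)$ appearing in \eqref{blowupspeed}.

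For each $n$ I would set $(\l,b)(t_n)=(\l_\infty,b_\infty)(t_n)$ and tune a two-parameter modification of the initial data by an implicit function argument to enforce the constraints $\|u_n(t_n)\|_2=\|Q\|_2$ and $E(u_n(t_n))=E_0$ for the given $E_0\in\R$. Solving \eqref{eq:nls} backward, $u_n(t)$ is decomposed in the modulated form above with an error $\eps_n$ satisfying orthogonality conditions that fix $(\l_n,b_n,\gamma_n,x_n)(t)$. The crux of the argument is a bootstrap on $[T_0,t_n]$, uniform in $n$, of the schematic form
\[
\norm{\eps_n(t)}_{H^1}^2\le \l_n(t)^{2\kappa},\qquad |\l_n(t)-\l_\infty(t)|+|b_n(t)-b_\infty(t)|\le \l_n(t)^{1+\kappa}
\]
for some small $\kappa>0$, closed via a mixed energy-virial Lyapunov functional built from the linearized Hamiltonian at $Q$ and a localized virial. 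Coercivity of this functional rests on the standard spectral structure of $L_\pm$ under the chosen orthogonality, while the subcritical nonlinearity enters the monotonicity only as a perturbative forcing of size $\l^\alpha$, integrable against the $s$-flow since $\alpha>0$. The resulting uniform $H^1$ bound on $\{u_n(T_0)\}$ together with radial symmetry yields precompactness in $H^1(\R^d)$; any subsequential limit $u_\infty(T_0)$ then generates a solution of \eqref{eq:nls} that satisfies the decomposition on $[T_0,0)$, hence the conclusions of the theorem.

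The main obstacle is expected to be the closure of the backward bootstrap. In contrast to \cite{RaSz11}, where the critical-mass perturbation arises from an inhomogeneous coefficient, here the correction is a nonlinearity, so that the error $\eps_n$ and the forcing are coupled nonlinearly; one must verify that $\alpha>0$ remains large enough over the full range $1<p<1+\tfrac{4}{d}$ and dimensions $d=1,2,3$ to absorb every forcing term in the Lyapunov estimate. A secondary delicate point is to work out the precise algebra of the corrector $R$, the coefficient $c_p$, and the uniform invertibility in $n$ and $E_0$ of the two-parameter modulation used to realize the prescribed mass and energy at $t=t_n$.
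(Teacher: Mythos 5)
Your overall architecture (profile expansion in $b$ and $\lambda^\alpha$, modulation, mixed energy--virial Lyapunov functional, backward integration from $t_n\uparrow 0$, compactness) is the same as the paper's, but two of your key steps have genuine gaps. First, your formal modulation law $b_s+c_p\lambda^\alpha=0$ omits the pseudoconformal term $b^2$: the correct reduction is $b_s+b^2-\beta\lambda^\alpha=0$ with $\beta>0$ given by \eqref{def-beta}, and on the relevant trajectory $b^2\sim\lambda^\alpha\sim s^{-2}$, so the two terms are exactly comparable and neither may be dropped. Moreover the mechanism producing the $\lambda^\alpha$ forcing is absent from your profile construction: you propose to solve $L_+R=-Q^p$ alone, but the obstruction appears at the next step, when $L_-$ (whose radial kernel is $Q$) must be inverted on a source containing $R$; solvability requires $\psld{R}{Q}=0$, which is achieved only by adding a multiple $\beta |y|^2Q$ to the $L_+$ equation, i.e.\ by inserting the extra term $\theta\frac{|y|^2}{4}P$ into the profile equation. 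It is precisely this $\theta$ which turns $b_s+b^2=0$ into $b_s+b^2=\beta\lambda^\alpha$ and fixes the law; with your ansatz the projection of the equation would not yield your ODE, so \eqref{blowupspeed} is asserted rather than derived (the exponent $\sigma$ happens to coincide, but the argument does not produce it). Incidentally, the standard pseudoconformal corrector is the imaginary direction $-i\frac{|y|^2}{4}Q$, governed by $L_-$ through $L_-(|y|^2Q)=-4\Lambda Q$, not by an $L_+$ equation for $T_1$.

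Second, a first-order profile ($T_1$ and $R$ only) cannot close the bootstrap uniformly in $p$. The parameter bounds are not closed by the Lyapunov functional (which controls $\eps$ only); in the paper they are closed by integrating the almost-conserved quantity $\mathcal E(b,\lambda)=\frac{b^2}{\lambda^2}-\frac{2\beta}{2-\alpha}\lambda^{\alpha-2}+\dots$ from \eqref{eener} together with $\mathcal F(\lambda(s))=s+O(s^{-1})$, and the drift of $E(P_{b,\lambda,\gamma})$ carries the factor $\lambda^{-2}\sim s^{4/\alpha}$, see \eqref{dener}. With only first correctors the profile error is $O(s^{-4})$, hence the energy drift is $O(s^{-4+\frac4\alpha})$, which is useless when $\alpha=2-\frac{d(p-1)}{2}$ is small, i.e.\ $p$ close to $1+\frac4d$. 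This is exactly why the paper expands to arbitrary order $K>20/\alpha$ in $b^{2j}\lambda^{(k+1)\alpha}$, producing a new coefficient $\beta_{j,k}$ (hence the full series $\theta$) at each order; your closing paragraph senses this difficulty but offers no mechanism to resolve it. Your plan to impose $\|u_n(t_n)\|_2=\|Q\|_2$ and $E=E_0$ exactly at $t_n$ by an implicit function argument is a workable variant of the paper's choice (data exactly equal to the profile, $(b_1,\lambda_1)$ tuned so that $\mathcal E(b_1,\lambda_1)=8E_0/\int|y|^2Q^2$, criticality of mass and energy recovered only in the limit), but note it makes $\eps(t_n)\neq0$, so the initialization of the bootstrap and of the modulation estimates must then be checked.
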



{\it Comments on the result.}

\medskip


{\it 1. On the existence of minimal elements.}
 Since the pioneering work \cite{Me93}, it has long been believed  that the  existence of a minimal blow up bubble was related to  the exceptional pseudo conformal symmetry \eqref{pseduocno}, or at least to the existence of a sufficiently sharp approximation of it, see \cite{BCD,MMmin}. However, a new methodology to construct minimal mass elements  for a  inhomogeneous (NLS) problem, {\it non perturbative} of critical (NLS), was developed in \cite{RaSz11},  and later successfully applied to problems without any sort of pseudo conformal symmetry, \cite{Boulenger,KLR,MMRII}. More generally, the heart of the matter is to be able to compute 
the trajectory of the solution on the soliton manifold, see  \cite{KMR,MMkdvan} for related problems for two solitary waves motion. The present paper adapts this approach which relies on the {\it direct} computation of the blow up speed and the control of non dispersive bubbles as in \cite{Martelkdv}. 

Observe that  the blow up speed \eqref{blowupspeed}   is quite surprising since it approaches the self simiar blow up speed ${|t|^{-\frac 12}}$ as $p\to \left(1+\frac 4d\right)^-$. \medskip

{\it 2. Uniqueness}. A delicate question investigated in \cite{Boulenger,MMRII,RaSz11} is the uniqueness of the minimal blow up element. Such a uniqueness statement should involve Galilean drifts since the Galilean symmetry applied to \eqref{eq:nls} is an $L^2$ isometry and automatically induces minimal elements with non trivial momentum.  Uniqueness issues lie within the general question of classifying the compact elements of the flow in the Kenig-Merle road map \cite{KM}. A more limited question is to determine the global behavior of the minimal element for negative time, which  is poorly understood in general. Here, at least in the case $E_0\geq 0$, one can see from Virial type estimates that the solution is global in negative time. \medskip

{\it 3. Detailed structure of the singular bubble}. The analysis provides the following detailed structure of the blow up bubble
\be\label{eq:form}
u(t,x)=\frac{1}{\lambda^{\frac{d}{2}}(t)}
Q\left(\frac{x}{\lambda (t)}\right)e^{-i\sigma\frac{|x|^2}{4t}}e^{i\gamma(t)}
+v\left(t,{x}\right)
\ee
where $Q$ is the mass critical ground state, and
$$\lim_{t\rightarrow 0}\norm{v(t)}_{2}=0,\quad
\lambda(t)\sim C_p { |t|^\sigma}\quad \hbox{as $t \to 0^-$},
$$
for some constant $C_p>0$.
Note also that the dimension restriction $d\in \{1,2,3\}$ is for the sake of simplicity but not essential.\medskip

The construction of the minimal blow up element for \eqref{thm:1} can be viewed as  part of a larger program of understanding what kind of blow up speeds are possible for (NLS) type models. Let us repeat that log-log type solutions with super critical mass can be constructed for \eqref{eq:nlseps}, but then the question becomes: do these examples illustrate all  possible blow up types, at least near the ground state profile? The recent series of works \cite{MMRII,MMRIII,MMR1} for the mass critical gKdV equation indicate that this is a delicate problem, and that the role played by the topology used to measure the perturbation is essential. More generally, symmetry breaking perturbations are very common in nonlinear analysis, and while they are expected to be lower order for generic stable blow up dynamics, our analysis shows that they can dramatically influence the structure of unstable threshold dynamics such as in our case  minimal blow up bubbles.\medskip

\noindent{\bf Aknowldedgments.} 
S. Le Coz is partly supported by the  ANR project ESONSE.
Y. Martel and P. Rapha\"el are partly supported by the ERC advanced grant 291214 BLOWDISOL.

\subsection{Notation} Let us collect the main notation used throughout the paper. For the sake of simplicity, we   work in the radial setting only. The $L^2$ scalar product and $L^q$ norm ($q\geq 1$) are  denoted by 
\[
\psld{u}{v}=\Re\left(\int_{\R^d}u(x)\bar v(x) dx\right),\quad \|u\|_{q} = \left(\int_{\R^d} |u|^q\right)^{\frac 1q}.
\]
We fix the notation:
\[
f(z)=|z|^{\frac{4}{d}}z;\quad 
g(z)=|z|^{p-1}z;\quad
F(z)=\frac{1}{\frac{4}{d}+2}|z|^{\frac{4}{d}+2};\quad
G(z)=\frac{1}{p+1}|z|^{p+1}.
\]
Identifying $\mathbb C$ with $\R^2$, we denote  the differential of these functions by $df$, $dg$, $dF$ and $dG$.  Let  $\Lambda$ be the generator of $L^2$-scaling i.e. 
\[
\Lambda=\frac{d}{2}+y\cdot\nabla.
\]
The linearized operator close to $Q$ comes as a matrix\[
L_+:=-\Delta+1-\left(1+\frac{4}{d}\right)Q^{\frac{4}{d}},\qquad
L_-:=-\Delta+1-Q^{\frac{4}{d}}.
\]
and the generalized kernel of $$\begin{pmatrix}0 & L_-\\-L_+&0\end{pmatrix}$$ is non-degenerate and spanned by 
 the symmetries of the problem (see \cite{Kw89,We85} for the original results and \cite{ChGuNaTs07} for a   short proof). It is completely described in $H^1_{\rm rad}(\R^d)$ by  the relations (we define $\rho$ as the unique radial solution to $L_+\rho =|y|^2Q$)
\begin{equation}\label{algebra}
L_-Q=0,\quad L_+\Lambda Q=-2Q,\quad L_-|y|^2Q=-4\Lambda Q,\quad L_+\rho =|y|^2Q.
\end{equation}

Denote by $\mathcal Y$ the set of radially symmetric functions $f\in \mathcal{C}^\infty(\R^d)$ such that
$$
\forall \alpha\in\N^d,\quad \exists C_\alpha,\ \kappa_\alpha>0, \ \forall x\in\R^d,\quad 
|\partial^\alpha f(x)|\leq C_\alpha (1+|x|)^{\kappa_\alpha} Q(x).
$$
It follows from the kernel properties of $L_+$ and $L_-$, and from well-known   properties of the Helmholtz kernel (see \cite{AbSt64} for the properties of Helmholtz kernel (i.e. Bessel and Hankel functions) and  \cite[Appendix A]{CoLe11} or proof of Lemma 3.2 in \cite{MeRaSz14} for related arguments) that
\begin{align}
&\forall g\in \mathcal{Y}, \ \exists f_+\in \mathcal{Y}, \ L_+ f_+ = g,\label{eq:Lp}\\
&\forall g\in \mathcal{Y}, \ \psld{g}{Q}=0, \ \exists f_-\in \mathcal{Y}, \ L_- f_- = g.\label{eq:Lm}
\end{align}
It is also well known (see e.g. \cite{MeRa04,MeRa05,RaSz11,We86}) that $L_+$ and $L_-$ verify the following coercivity property: {\it there exists $\mu>0$ such that  for all $\eps=\eps_1+i\eps_2\in H^1_{\rm rad}(\R^d)$,}  \begin{equation}\label{eq:coercivity}
\dual{L_+\eps_1}{\eps_1}+\dual{L_-\eps_2}{\eps_2}\geq  \mu \norm{\eps}_{H^1}^2-\frac 1\mu\left( \psld{\eps_1}{Q}^2+\psld{\eps_1}{|y|^2Q}^2+\psld{\eps_2}{\rho}^2\right).
\end{equation}
Throughout the paper,  $C$   denotes various positive constants whose exact values may vary from line to line but are of no importance in the analysis. When an inequality is true up to   such a  constant, we also use the notation $\lesssim$, $\gtrsim$ or $\approx$.

\section{Construction of the blow-up profile}

In this section, we define the blow-up profile which is relevant to construct the minimal mass solution -- see Proposition  \ref{prop:profile} below. 

\subsection{Blow up profile}
Let us start with some heuristic arguments justifying the construction.
As usual in blow up contexts, we look for a solution of the following form, with rescaled variables $(s,y)$:
\[
u(t,x)=\frac{1}{\lambda^{\frac{d}{2}}(s)}w(s,y)e^{i\gamma(s)-i\frac{b(s)|y|^2}{4}},\qquad
\frac{ds}{dt}=\frac{1}{\lambda^2},\qquad 
y=\frac{x}{\lambda(s)},
\]
where the function $w$, and the time dependent parameters $\lambda>0$, $b$ and $\gamma$ are to be determined
satisfying the following equation
\begin{multline}\label{eq:nlsscal}
iw_s+\Delta w-w+f(w)+\lambda^{\alpha}g(w)\\-i\left(b+\frac{\lambda_s}{\lambda}\right)\Lambda w
+(1-\gamma_s)w
+(b_s+b^2)\frac{|y|^2}{4}w-b\left(b+\frac{\lambda_s}{\lambda}\right)\frac{|y|^2}{2}w
=0,
\end{multline}
where  
\[
\alpha=2-\frac{d(p-1)}{2}\in (0,2).
\]
Since we look for blow up solutions, the parameter $\lambda(s)$ should converge to zero as $s\to \infty$.
Therefore,
\begin{equation}\label{ww}
w(s,y)=Q(y),\qquad b+\frac{\lambda_s}{\lambda}=b_s+b^2=1-\gamma_s=0
\end{equation}
is a solution of \eqref{eq:nlsscal} at the first order, i.e. when neglecting $\lambda^\alpha |w|^{p-1}w$. 
However, the first order error term $\lambda^\alpha Q^p$ cannot be neglected in the minimal mass blow up analysis (while it could be neglected easily in the log-log regime where $\l\sim e^{-e^{\frac{c}{b}}}$). Therefore, starting from $Q$, we need to look for a refined blow up ansatz. 
Actually, to close the analysis for any $\alpha\in (0,2)$, we need to remove error terms  at any order of $\lambda^\alpha$ and $b$ in the equation of $w$. It is important to note that in the process of constructing the approximate solution,  we cannot exactly solve \eqref{eq:nlsscal} since we need to introduce  new terms in the equation (due to  degrees of freedom necessary to construct the ansatz) that will modify the modulation equations in \eqref{ww}. These terms (gathered in the time dependent function $\theta(s)$ below) are responsible for the specific blow up law obtained in Theorem~\ref{thm:1}. 

\bigskip

Fix $K\in \N$, $K\gg 1$ ($K>20/\alpha$ is sufficient in the proof of Theorem \ref{thm:1}), and
$$
 \Sigma_K= \{(j,k)\in \N^2 \ | \ j+k\leq K \}.
$$

\begin{proposition}\label{prop:profile}
Let $\lambda(s)>0$ and $b(s)\in \R$ be $\mathcal C^1$ functions of $s$ such that $\lambda(s)+|b(s)|\ll 1$.
\\
{\rm (i) Existence of a blow up profile.}
For any $(j,k)\in \Sigma_K$, there exist real-valued functions $P_{j,k}^+\in \mathcal Y$, $P_{j,k}^-\in \mathcal Y$ and $\beta_{j,k}\in\R$ such that
$P(s,y)=\tilde P_K(y;b(s),\lambda(s))$, where $\tilde P_K$ is defined by
\begin{equation}\label{eq:def-P}
   \tilde P_K(y;b,\lambda):=Q(y)+\sum_{(j,k)\in\Sigma_K} b^{2j}\lambda^{(k+1)\alpha} P_{j,k}^+(y)
                +i\sum_{(j,k)\in\Sigma_K}  b^{2j+1}\lambda^{(k+1)\alpha} P_{j,k}^-(y)
\end{equation}
satisfies
\begin{equation*}
i\partial_s P +\Delta P-P+f(P)+\lambda^\alpha g(P)
+\theta \frac{|y|^2}{4}P =\Psi_K
\end{equation*}
where $\theta(s)=\tilde \theta(b(s),\lambda(s))$,
\[
\tilde \theta(b,\lambda) = \sum_{(j,k)\in\Sigma_K} b^{2j}\lambda^{(k+1)\alpha} \beta_{j,k}
\]
and
\begin{align}
\sup_{y\in \R^d} \left(e^{\frac{|y|}{2}}\left( |\Psi_K(y)| + |\nabla\Psi_K(y)|\right)\right) 
 \lesssim \lambda^\alpha\left(\Big|b+\frac{\lambda_s}{\lambda}\Big|+\left|b_s+b^2-\theta\right|\right) 
 +(|b|^2+\lambda^{\alpha})^{K+2} \label{eq:error-term-estimate}.
\end{align}
{\rm (ii) Rescaled blow up profile.} Let
\begin{equation}\label{eq:def-P_b}
P_b(s,y):=P(s,y)e^{-i\frac{b(s)|y|^2}{4}}.
\end{equation}
Then
\begin{multline}
i\partial_s P_b+\Delta P_b-P_b+f(P_b)+\lambda^\alpha g(P_b)
-i\frac{\lambda_s}{\lambda}\Lambda P_b\\=-i\left(\frac{\lambda_s}{\lambda}+b\right)\Lambda P_b+(b_s+b^2-\theta)\frac{|y|^2}{4}P_b+\Psi_K e^{-i\frac{b|y|^2}{4}}.
\label{eq:P_b}\end{multline}
{\rm (iii) Mass and energy properties of the  blow up profile.}
Let
$$
P_{b,\lambda,\gamma}(s,y) = \frac 1{\lambda^{\frac d2}} P_b\left(s,\frac x\lambda\right)e^{i\gamma}.
$$
Then,
\begin{equation}\label{dmass}
  \left|\frac d{ds}\int |P_{b,\lambda,\gamma}|^2\right| \lesssim
  \lambda^\alpha\left(\Big|b+\frac{\lambda_s}{\lambda}\Big|+\left|b_s+b^2-\theta\right|\right) 
  +(|b|^2+\lambda^{\alpha})^{K+2},
\end{equation}
\begin{equation}\label{dener}
  \left|\frac d{ds} {E(P_{b,\lambda,\gamma})}\right|\lesssim
  \frac 1{\lambda^2} \left(\Big|b+\frac{\lambda_s}{\lambda}\Big|+\left|b_s+b^2-\theta\right|
  +(|b|^2+\lambda^{\alpha})^{K+2}\right).
\end{equation}
Moreover, for any $(j,k)\in\Sigma_K,$ there exist  $\eta_{j,k} \in \R$ such that
\begin{equation}\label{eener}
  \left|E(P_{b,\lambda,\gamma}) - \frac{\int |y|^2Q^2}{8} \mathcal{E}(b,\lambda)\right|
  \lesssim \frac{(b^2+\lambda^\alpha)^{K+2}}{\lambda^2},
\end{equation}
where
\be
\label{defee}
  \mathcal{E}(b,\lambda) =  \frac{b^2}{\lambda^2}
  -\frac{2\beta}{2-\alpha} \lambda^{\alpha-2}
  +\lambda^{\alpha-2}\sum_{(j,k)\in \Sigma_K, j+k\geq 1} b^{2j} \lambda^{k\alpha} \eta_{j,k}.
\ee
\end{proposition}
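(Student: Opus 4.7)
The plan is to build $(P_{j,k}^+, P_{j,k}^-, \beta_{j,k})$ by induction on $n = j+k$, with a secondary order inside each level so that every source term arises only from previously constructed objects. Plugging \eqref{eq:def-P} into the equation in (i), expanding $f(P)$ and $\lambda^\alpha g(P)$ by Taylor series around $Q$, and substituting $\lambda_s/\lambda = -b$ and $b_s = \theta - b^2$ for every time derivative, one organizes the result as a formal series in the graded ring generated by $b^2$ and $\lambda^\alpha$. At order $b^{2j}\lambda^{(k+1)\alpha}$ the real part produces $L_+ P_{j,k}^+ = F_{j,k}^+$, while at order $b^{2j+1}\lambda^{(k+1)\alpha}$ the imaginary part produces $L_- P_{j,k}^- = F_{j,k}^-$; each $F_{j,k}^\pm \in \mathcal Y$ is determined by the profiles and parameters of lower levels together with a $\beta_{j,k}\frac{|y|^2}{4}Q$ contribution coming through $\theta|y|^2 P/4$. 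The base case $(0,0)$ reads $L_+ P_{0,0}^+ = -Q^p$.

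Solvability uses \eqref{eq:Lp} and \eqref{eq:Lm}. Since $L_+$ has trivial kernel on radial functions, the $L_+$ equations yield $P_{j,k}^+\in\mathcal Y$ without constraint. The $L_-$ equations require the Fredholm condition $\psld{F_{j,k}^-}{Q}=0$; at each such step the appropriate $\beta_{j,k}$ enters linearly, multiplied by the nonzero quantity $\frac 14 \psld{|y|^2 Q}{Q} = \frac 14\int|y|^2 Q^2$, so a unique choice of $\beta_{j,k}$ annihilates the $Q$-projection of the remainder and \eqref{eq:Lm} then provides $P_{j,k}^- \in \mathcal Y$. The error $\Psi_K$ has two families of contributions: terms linear in $(b_s+b^2-\theta)$ or $(\lambda_s/\lambda+b)$, produced every time $\partial_s$ acts on a coefficient and one reinstates the exact modulation equations, which give the first half of \eqref{eq:error-term-estimate}; and the truncated Taylor remainders of $f(P)$, $\lambda^\alpha g(P)$ and of $\theta|y|^2 P/4$ beyond $\Sigma_K$, which by construction are polynomial in $b^2,\lambda^\alpha$ of total degree $\geq K+2$ with exponentially localized coefficients inherited from $\mathcal Y$. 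The bound on $\nabla\Psi_K$ is obtained from the same expansion after differentiation.

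For part (ii) we just compute: writing $P_b = P e^{-ib|y|^2/4}$, one has $\Delta P_b = \bigl(\Delta P - ib\Lambda P - \frac{ibd}{2}P - \frac{b^2|y|^2}{4}P\bigr)e^{-ib|y|^2/4}$ and $i\partial_s P_b = \bigl(i\partial_s P + \frac{b_s|y|^2}{4}P\bigr)e^{-ib|y|^2/4}$, so combining with the equation of (i) and inserting $b_s = \theta - b^2 + (b_s+b^2-\theta)$ produces \eqref{eq:P_b} directly. For (iii), \eqref{dmass} follows by pairing \eqref{eq:P_b} with $\bar P_b$ in $L^2$ and taking imaginary parts: the self-adjoint Schrödinger structure together with the rescaling and phase terms kill the main contributions, and only $\Im\int (\text{right hand side})\bar P_b$ survives, bounded through (i). The estimate \eqref{dener} is obtained analogously by pairing with $\partial_s\bar P_b$ and taking real parts; the extra $\lambda^{-2}$ factor comes from the rescaling relating $P_{b,\lambda,\gamma}$ to $P_b$.

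Finally, the expansion \eqref{eener} is obtained by substituting the explicit form of $P_b$ into $E(P_{b,\lambda,\gamma})$ and sorting the result as a polynomial in $b^2$ and $\lambda^\alpha$: the cross term between $\nabla P$ and $\nabla(e^{-ib|y|^2/4})$ in $|\nabla P_b|^2$ produces $\frac{b^2}{8\lambda^2}\int|y|^2 Q^2$ at leading order, while $\lambda^\alpha \int G(P)/\lambda^{2}$ generates the $\lambda^{\alpha-2}$ piece. The precise coefficient $-\frac{2\beta}{2-\alpha}$ is pinned down by the fact that $\mathcal{E}(b,\lambda)$ must be an approximate invariant of the reduced ODE $\lambda_s/\lambda=-b$, $b_s=\theta-b^2\sim \beta\lambda^\alpha-b^2$: indeed $\frac{d}{ds}(b^2/\lambda^2) \sim 2\beta b\lambda^{\alpha-2}$ and $\frac{d}{ds}\lambda^{\alpha-2} = (2-\alpha)b\lambda^{\alpha-2}$, forcing the stated combination to be conserved at leading order, and its proportionality to $E$ fixes the prefactor $\frac 18\int|y|^2Q^2$. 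The remaining terms are packaged as the $\eta_{j,k}$, and the tail is controlled by $(b^2+\lambda^\alpha)^{K+2}/\lambda^2$. The main obstacle throughout is the combinatorial bookkeeping of the cascade: verifying that the induction closes at every order in $\Sigma_K$, that all source terms remain in $\mathcal Y$, and that the Fredholm obstruction at each $L_-$ step is always absorbed by the freshly introduced $\beta_{j,k}$.
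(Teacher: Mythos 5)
Your overall architecture (induction on $(j,k)$, $L_+$ invertibility, Fredholm conditions for $L_-$, Taylor remainders beyond order $K$) matches the paper, but the single point on which the whole construction hinges is misidentified, and as stated your induction fails at the very first step. You claim that $\beta_{j,k}$ enters the $L_-$ solvability condition \emph{directly}, through $\theta\frac{|y|^2}{4}P$, multiplied by $\tfrac14\psld{|y|^2Q}{Q}$. This cannot happen: $\theta$ contains only even powers of $b$ and $\Im P$ only odd powers, so in the imaginary-part equation at order $b^{2j+1}\lambda^{(k+1)\alpha}$ the function $\theta\frac{|y|^2}{4}\Im P$ contributes only \emph{strictly lower-order} $\beta_{j',k'}$'s; the new $\beta_{j,k}$ appears at its own order only in the \emph{real} equation, as the source term $\beta_{j,k}\frac{|y|^2}{4}Q$ for $L_+P_{j,k}^+$. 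Its influence on the $L_-$ equation is indirect: writing $P_{j,k}^+=L_+^{-1}F_{j,k}^+ +\tfrac14\beta_{j,k}\rho$ with $L_+\rho=|y|^2Q$, the time-derivative coupling puts $((k+1)\alpha+2j)P_{j,k}^+$ into the imaginary-part source, and the Fredholm condition $\psld{-F_{j,k}^-+((k+1)\alpha+2j)P_{j,k}^+}{Q}=0$ is then an affine equation in $\beta_{j,k}$ with slope $\tfrac14((k+1)\alpha+2j)\psld{\rho}{Q}$, nonzero because $\psld{\rho}{Q}=-\tfrac12\psld{|y|^2Q}{\Lambda Q}=\tfrac12\int|y|^2Q^2$ by \eqref{algebra}. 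Your base case makes the error concrete: you solve $L_+P_{0,0}^+=-Q^p$ with no $\beta_{0,0}$ term, but then $\psld{P_{0,0}^+}{Q}=\pm\tfrac12\psld{Q^p}{\Lambda Q}=\pm\tfrac{d(p-1)}{2(p+1)}\int Q^{p+1}\neq0$, so $L_-P_{0,0}^-=-\alpha P_{0,0}^+$ is \emph{not} solvable and no profile exists; the correct equation is $L_+P_{0,0}^+=Q^p+\beta_{0,0}|y|^2Q$ with $\beta_{0,0}$ chosen precisely to kill this projection, which is also what produces the explicit constant \eqref{def-beta} driving the blow-up law. Since the entire point of introducing the $\beta_{j,k}$ (and hence $\theta$) is this solvability mechanism, this is a genuine gap, not a bookkeeping slip.

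A secondary weakness is in (iii). For \eqref{dener}, "pairing with $\partial_s\bar P_b$" is not enough as stated: because of the explicit $\lambda$-dependence of the rescaled energy (the factor $\lambda^{-2}$ and the $\lambda^\alpha\int G$ term), the naive computation produces terms of size $|\lambda_s/\lambda|/\lambda^2\approx |b|/\lambda^2$, which are \emph{not} controlled by the right-hand side of \eqref{dener} (which only contains $|b+\lambda_s/\lambda|$). One must exhibit the cancellation of all $\lambda_s/\lambda$ terms via the dilation/Pohozaev identity $\dual{\tilde E'(\lambda,P_b)}{\Lambda P_b}=2\tilde E(\lambda,P_b)+\alpha\lambda^\alpha\int G(P_b)$, which your sketch omits. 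Likewise, for \eqref{eener} the coefficient $-\frac{2\beta}{2-\alpha}$ should be obtained by the direct identity $\int G(Q)=\frac{\beta}{4(2-\alpha)}\int|y|^2Q^2$ (a rearrangement of \eqref{def-beta}); your appeal to approximate invariance of the reduced ODE is a heuristic consistency check rather than a derivation, and by itself it does not prove the stated error bound.
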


See a similar construction of a blow up profile  at any order of $b$ in \cite{MeRaSz14}.
One sees in \eqref{eq:P_b} the impact of the subcritical nonlinearity $g(u)$ on the blow up law
$b_s+b^2-\theta=0$, which differs from the unperturbed equation $b_s+b^2=0$, and leads to leading order to $\l^\alpha\approx b^2$, see \eqref{eq:dbu}.

\begin{proof} [Proof of Proposition \ref{prop:profile}]
\noindent{\it Proof of {\rm (i)}.} For time dependent functions $\lambda(s)>0$, $b(s)$, we set 
\[
P=Q+\lambda^\alpha Z\quad  \text{where}\quad 
Z=\sum_{(j,k)\in\Sigma_K} b^{2j}\lambda^{k\alpha} P_{j,k}^+
+i\sum_{(j,k)\in\Sigma_K}  b^{2j+1}\lambda^{k\alpha} P_{j,k}^-,
\]
$$\theta(s)=  \sum_{(j,k)\in\Sigma_K} b^{2j}(s)\lambda^{(k+1)\alpha}(s) \beta_{j,k} ,$$
where $P_{j,k}^+\in \mathcal Y$, $P_{j,k}^-\in \mathcal Y$ and $\beta_{j,k}$ are to be determined. 
Set
$$
\Psi_K=i\partial_s P +\Delta P-P+|P|^{\frac4d}P+\lambda^\alpha |P|^{p-1}P
+\theta  \frac{|y|^2}{4}P.
$$
The objective is to choose the unknown functions and parameters so that the error term $\Psi_K$ is controlled as in \eqref{eq:error-term-estimate}.
First, 
\begin{align*}
i P_s& = i \frac{\lambda_s}{\lambda} \sum_{(j,k)\in\Sigma_K} (k+1)\alpha b^{2j} \lambda^{(k+1)\alpha}P_{j,k}^+
+ i b_s \sum_{(j,k)\in\Sigma_K} 2j b^{2j-1}\lambda^{(k+1)\alpha}P_{j,k}^+\nonumber\\
& -  \frac{\lambda_s}{\lambda} \sum_{(j,k)\in\Sigma_K} (k+1)\alpha b^{2j+1}\lambda^{(k+1)\alpha}P_{j,k}^-
- b_s \sum_{(j,k)\in\Sigma_K} (2j+1) b^{2j}\lambda^{(k+1)\alpha}P_{j,k}^-\nonumber\\
&=-i \sum_{(j,k)\in\Sigma_K} (k+1)\alpha b^{2j+1}\lambda^{(k+1)\alpha}P_{j,k}^+\nonumber\\
&- i \left( b^2-\sum_{(j',k')\in\Sigma_K} b^{2j'}\lambda^{(k'+1)\alpha} \beta_{j',k'}\right)   \sum_{(j,k)\in\Sigma_K} 2j b^{2j-1}\lambda^{(k+1)\alpha}P_{j,k}^+\nonumber\\
& +   \sum_{(j,k)\in\Sigma_K} (k+1)\alpha b^{2(j+1)}\lambda^{(k+1)\alpha}P_{j,k}^-\nonumber\\
&+\left( b^2-\sum_{(j',k')\in\Sigma_K} b^{2j'}\lambda^{(k'+1)\alpha} \beta_{j',k'}\right)  \sum_{(j,k)\in\Sigma_K} (2j+1) b^{2j}\lambda^{(k+1)\alpha}P_{j,k}^-+\Psi^{P_s}
\end{align*}
where
\begin{align}
\Psi^{P_s}
&=\left( \frac{\lambda_s}{\lambda}+b\right) \sum_{(j,k)\in\Sigma_K} (k+1)\alpha b^{2j}\lambda^{(k+1)\alpha}\left(iP_{j,k}^+ - b P_{j,k}^-\right)\nonumber \\
&+ \left(b_s+b^2-\theta \right) \sum_{(j,k)\in\Sigma_K}  b^{2j-1}\lambda^{(k+1)\alpha} \left(2j i P_{j,k}^+-(2j+1)b P_{j,k}^-\right).
\label{Ps}
\end{align}
We rewrite
\begin{align*}
i P_s& =-i\sum_{(j,k)\in\Sigma_K} \left((k+1)\alpha+2j\right) b^{2j+1}\lambda^{(k+1)\alpha}P_{j,k}^+
\\& + i \sum_{j,k\geq 0} b^{2j+1}\lambda^{(k+1)\alpha}F_{j,k}^{P_s,-}
+ \sum_{j,k\geq 0} b^{2j}\lambda^{(k+1)\alpha}F_{j,k}^{P_s,+}
+\Psi^{P_s},
\end{align*}
where for $j,k\geq 0$,
$F_{j,k}^{P_s,\pm}$ depends on various functions $P_{j',k'}^\pm$ and parameters $\beta_{j',k'}$ for 
$(j',k')\in \Sigma_K$ such that either $k'\leq k-1$ and $j'\leq j+1$ or  $k'\leq k$ and $j'\leq j-1$.
Only a finite number of these functions are nonzero.

\bigskip

Next, using $\Delta Q - Q + Q^{\frac 4d+1}=0$, we get
\begin{multline*}
\Delta P - P + |P|^{\frac 4d} P  =
- \sum_{(j,k)\in\Sigma_K} b^{2j}  \lambda^{(k+1)\alpha} L_+ P_{j,k}^+
-i\sum_{(j,k)\in\Sigma_K} b^{2j+1}\lambda^{(k+1)\alpha} L_- P_{j,k}^-\\
+f(Q+\lambda^\alpha Z)-f(Q)-\lambda^\alpha df(Q)Z.
\end{multline*}
Let
\begin{multline}
	\Psi^f= f(Q+\lambda^\alpha Z)-\sum_{k=0}^K d^kf(Q)(\lambda^\alpha Z,\dots,\lambda^\alpha Z)\\
	=
	|Q+\lambda^\alpha Z|^{\frac 4d}(Q+\lambda^\alpha Z) 
	- Q^{\frac 4d+1} \left(1+\sum_{n=1}^{K+1} \frac{\left(\frac 2d+1\right)\left(\frac 2d\right)\ldots\left(\frac 2d-n+2\right)}{n!} \left(\frac{\lambda^\alpha Z}{Q}\right)^n \right) 
\\   \times \left(1+\sum_{n'=1}^{K+1} \frac{\left(\frac 2d\right)\left(\frac 2d-1\right)\ldots\left(\frac 2d-n'+1\right)}{n'!} \left(\frac{\lambda^\alpha \overline Z}{Q}\right)^{n'}\right).
\label{psif}
\end{multline}
Then 
\begin{align*}
\Delta P - P + |P|^{\frac 4d} P & =
- \sum_{(j,k)\in\Sigma_K} b^{2j}  \lambda^{(k+1)\alpha} L_+ P_{j,k}^+
-i\sum_{(j,k)\in\Sigma_K} b^{2j+1}\lambda^{(k+1)\alpha} L_- P_{j,k}^-\\
& + i \sum_{j\geq 0,k\geq 1} b^{2j+1}\lambda^{(k+1)\alpha}F_{j,k}^{f,-}
+ \sum_{j,k\geq 0} b^{2j}\lambda^{(k+1)\alpha}F_{j,k}^{f,+}
+\Psi^f.
\end{align*}
where  for $j,k\geq 0$, 
$F_{j,k}^{f,\pm}$ depends on $Q$ and on various functions $P_{j',k'}^\pm$ for $(j',k')\in \Sigma_K$ such that $k'\leq k-1$ and $j'\leq j$.

\bigskip

Using a similar argument for $\lambda^{\alpha}|P|^{p-1} P$, we obtain
\begin{align*}
\lambda^{\alpha}|P|^{p-1} P & =
 i \sum_{j\geq 0,k\geq 1} b^{2j+1}\lambda^{(k+1)\alpha}F_{j,k}^{g,-}
+ \sum_{j\geq 0,k\geq 1} b^{2j}\lambda^{(k+1)\alpha}F_{j,k}^{g,+}
+\Psi^g,
\end{align*}
where 
\begin{multline*}
\Psi^g = \lambda^\alpha\left( |Q+\lambda^\alpha Z|^{p-1}(Q+\lambda^\alpha Z) 
	- Q^{\frac p2+1} \left(1+\sum_{n=1}^{K+1} \frac{\left(\frac p2\right)\left(\frac p2-1\right)\ldots\left(\frac p2-n+1\right)}{n!} \left(\frac{\lambda^\alpha Z}{Q}\right)^n \right) 
\right.\\ \left.  \times \left(1+\sum_{n=1}^{K+1} \frac{\left(\frac p2-1\right)\left(\frac p2-2\right)\ldots\left(\frac p2-n\right)}{n!} \left(\frac{\lambda^\alpha \overline Z}{Q}\right)^n\right)
\right),
\end{multline*}
and 
where  for $j,k\geq 0$,
$F_{j,k}^{g,\pm}$ depends on $Q$ and on various functions $P_{j',k'}^\pm$ for $(j',k')\in \Sigma_K$ such that $k'\leq k-1$ and $j'\leq j$.

\bigskip

Finally,
\begin{align*}
\theta \frac {|y|^2}4 P & = \left( \sum_{(j,k)\in\Sigma_K} b^{2j}  \lambda^{(k+1)\alpha} \beta_{j,k} \right) \frac {|y|^2}4 Q \\&+ i \sum_{j,k\geq 0} b^{2j+1}\lambda^{(k+1)\alpha}F_{j,k}^{\theta,-} + \sum_{j,k\geq 0} b^{2j}\lambda^{(k+1)\alpha}F_{j,k}^{\theta,+},
\end{align*}
where 
$F_{j,k}^{\theta,\pm}$ depends on $Q$ and on various functions $P_{j',k'}^\pm$ or parameters $\beta_{j',k'}$ for $(j',k')\in \Sigma_K$ such that $k'\leq k-1$ and $j'\leq j$.\\
Combining these computations, we obtain
\begin{align*}
\Psi_K & = 
- \sum_{(j,k)\in\Sigma_K} b^{2j}  \lambda^{(k+1)\alpha} \left( L_+ P_{j,k}^+ - F_{j,k}^{+} -\beta_{j,k} |y|^2 Q \right) \\
& - i\sum_{(j,k)\in\Sigma_K} b^{2j+1}\lambda^{(k+1)\alpha} \left( L_- P_{j,k}^- - F_{j,k}^{-} +((k+1)\alpha +2j) P_{j,k}^+ \right) \\
& +\Psi^{>K} +\Psi^{P_s}+\Psi^f+\Psi^g,
\end{align*}
where
$$
F_{j,k}^{\pm}=F_{j,k}^{P_s,\pm}+F_{j,k}^{f,\pm}+F_{j,k}^{g,\pm}+F_{j,k}^{\theta,\pm},
$$
and
$$
\Psi^{>K}= \sum_{j,k>0, \ (j,k)\not\in\Sigma_K} b^{2j}  \lambda^{(k+1)\alpha}  F_{j,k}^{+} 
+ i\sum_{j,k>0, \ (j,k)\not\in\Sigma_K} b^{2j+1}\lambda^{(k+1)\alpha}  F_{j,k}^{-}.
$$
(Note that the series in the expression of $\Psi^{>K}$ contains only a finite number of terms.)
Now, for any $(j,k)\in \Sigma_K$, we want to choose recursively $P_{j,k}^\pm \in \mathcal Y$  and $\beta_{j,k}$ to solve the system
\[ (S_{j,k})\qquad \left\{
\begin{array}{l}
 L_+ P_{j,k}^+ - F_{j,k}^{+} -\beta_{j,k} |y|^2 Q =0\\
  L_- P_{j,k}^- - F_{j,k}^{-} +((k+1)\alpha +2j) P_{j,k}^+ =0,
\end{array}\right.
\]
where $F_{j,k}^\pm$ are source terms depending of previously determined $P_{j',k'}^\pm$
and $\beta_{j',k'}$.
We argue by a suitable induction argument on the two parameters $j$ and $k$.
For $(j,k)=(0,0)$, we see that the system writes
\begin{align*}
& L_+ P_{0,0}^+ - Q^p -\beta_{0,0} |y|^2 Q =0\\
&  L_- P_{0,0}^-   + \alpha  P_{0,0}^+=0 ,
\end{align*}
(the term $Q^p$ in the first line is coming from $\Psi^g$).
By \eqref{eq:Lm}, for any $\beta_{0,0}\in\R$, there exists a unique $P_{0,0}^+\in \mathcal Y$ so that $L_+ P_{0,0}^+ - Q^p -\beta_{0,0} |y|^2 Q =0$.
We choose $\beta_{0,0}\in\R$ so that  
\[
\psld{P_{0,0}^+}{Q}=-\frac 12\psld{L_+ P_{0,0}^+}{\Lambda Q} =-\frac12 \psld{Q^p+\beta_{0,0}\frac{|y|^2}{4}Q}{ \Lambda Q}=0
\]
(recall from \eqref{algebra} that $L _+\Lambda Q = -2 Q$), which gives
\begin{equation}\label{def-beta}
\beta:=\beta_{0,0}=-\frac{4\psld{Q^p}{\Lambda Q}}{\psld{|y|^2Q}{\Lambda Q}}=\frac{2d(p-1)}{p+1}\frac{\norm{Q}_{p+1}^{p+1}}{\norm{yQ}_2^2}>0.
\end{equation}
By \eqref{eq:Lm}, there exists $P_{0,0}^-\in \mathcal Y$ (unique up to the addition of $cQ$) such that $L_- P_{0,0}^-   + \alpha  P_{0,0}^+=0$.
Now, we assume that for some $(j_0,k_0)\in\Sigma_K$, the following assertion is true:

\bigskip

\noindent $H(j_0,k_0)$ : {\sl for all $(j,k)\in\Sigma_K$ such that either
$k<k_0$, or $k=k_0$ and $j<j_0$, the system $(S_{j,k})$ has a solution $(P_{j,k}^+,P_{j,k}^-,\beta_{j,k})$,  $P_{j,k}^\pm \in \mathcal Y$.} 

\bigskip

In view of the definition of $F_{j_0,k_0}^\pm$, $H(j_0,k_0)$ implies in particular that $F_{j_0,k_0}^\pm\in\mathcal Y$. We now solve the system $(S_{j_0,k_0})$ as before. By \eqref{eq:Lm}, for any $\beta_{j_0,k_0}\in\R$, there exists a unique $P_{j_0,k_0}^+\in \mathcal Y$ so that $L_+ P_{j_0,k_0}^+ - F_{j_0,k_0}^{+} -\beta_{j_0,k_0} |y|^2 Q =0$.
We uniquely choose $\beta_{j_0,k_0}\in\R$ so that  
\[
\psld{ - F_{j_0,k_0}^{-} +((k_0+1)\alpha +2j_0) P_{j_0,k_0}^+}{Q}=0.
\]
By \eqref{eq:Lm}, there exists $P_{j_0,k_0}^-\in \mathcal Y$ (unique up to the addition of $cQ$) such that $L_- P_{j_0,k_0}^-   - F_{j_0,k_0}^{-} +((k_0+1)\alpha +2j_0) P_{j_0,k_0}^+=0$.
In particular, we have proved that
if $j_0<K$, then $H(j_0,k_0)$ implies $H(j_0+1,k_0)$, and $H(K,k_0)$ implies $H(1,k_0+1)$.
This is enough to complete an induction argument on the two parameters $(j,k)$.
Therefore, system $(S_{j,k})$ is solved for all $(j,k)\in \Sigma_K$.\\
It remains to estimate $\Psi_K$ and $\nabla \Psi_K$.
It is straightforward to check that
\begin{align*}
& \sup_{y\in \R^d} \left(e^{\frac{|y|}{2}}\left( |\Psi^{P_s}(y)|+  |\nabla\Psi^{P_s}(y)|\right)\right) \lesssim  \lambda^\alpha \left(\left| \frac{\lambda_s}{\lambda}+b\right|+
\left|b_s+b^2-\theta\right|\right).
\end{align*}
Next, we claim
\begin{equation}\label{eq:taylor}
  |\Psi^f|\lesssim \left(\lambda^{(K+2)\alpha}+\lambda^\alpha b^{2K+2}\right) Q.
\end{equation}
Indeed, first, if $y$ is such that $\left|\lambda^\alpha\frac{Z(y)}{Q(y)}\right|<\frac12$ then the result follows from \eqref{psif} and a  order Taylor expansion of order $K+1$ of 
$(1+\frac{\lambda^\alpha Z}{Q})^{\frac 2d+1}$ and  $(1+\frac{\lambda^\alpha Z}{Q})^{\frac 2d}$.
Second, if on the contrary,  $\left|\lambda^\alpha\frac{Z(y)}{Q(y)}\right|\geq \frac12$, then, since
$Z\in \mathcal Y$, we have, for such $y$,
\[
 Q(y)\leq 2\lambda^\alpha| Z(y)| \lesssim \lambda^\alpha (1+|y|^\kappa) Q(y)
\quad \hbox{and so}\quad
Q(y)+|Z(y)| \lesssim e^{-\frac 12 \lambda^{\alpha/\kappa}},
\]
which completes the proof of \eqref{eq:taylor}.
The proofs of estimates  for $\nabla \Psi_f$, $\Psi_g$ and $\nabla \Psi_g$ are similar.
Finally the following estimates for $\Psi^{>K}$ and $\nabla\Psi^{>K}$ are clear:
$$|\Psi^{>K}|+|\nabla \Psi^{>K}| \lesssim \left(\lambda^{(K+2)\alpha} + \lambda^\alpha |b|^{2K+2}\right)Q^{\frac 12}.
$$
The result follows from  $K\geq \frac {20}\alpha$.\medskip

\noindent{\it Proof of {\rm (ii)}.} This is a straightforward computation which is left to the reader.\\
\noindent{\it Proof of {\rm (iii)}.} To prove \eqref{dmass}, we hit \eqref{eq:P_b} with $iP_b$ and compute using the critical relation $(P,\Lambda P)_2=0$:
$$\frac12 \frac{d}{ds}\|P_b\|_{2}^2=(i\pa_sP_b,iP_b)_2=(\Psi_Ke^{-i\frac{b|y|^2}{4}},iP_b)$$ and \eqref{dmass} follows from \eqref{eq:error-term-estimate}. For  \eqref{dener}, we have from scaling:
\[
 E(P_{b,\lambda,\gamma})=\frac{1}{\lambda^2}\left(\frac 12\int|\nabla P_b|^2-\int F(P_b)-\lambda^\alpha\int G(P_b)\right)=:\frac{1}{\lambda^2}\tilde E(\l, P_b)
\]
Therefore, 
\begin{equation}\label{eq:calc-ener1}
\frac{d}{ds} E(P_{b,\lambda,\gamma})=\frac{1}{\lambda^2}\left(-2\frac{\lambda_s}{\lambda}\tilde E(\l,P_b)+\dual{\tilde E'(\l,P_b)}{\partial_sP_b}-\alpha \l^\alpha\lsl\int G(P_b) \right).
\end{equation}
Using the equation \eqref{eq:P_b} of $P_b$, we compute:
\begin{multline}\label{eq:calc-ener2}
\dual{\tilde E'(\l,P_b)}{\partial_sP_b}
=\frac{\lambda_s}{\lambda}\dual{\tilde E'(\l,P_b)}{\Lambda P_b}-\left(\frac{\lambda_s}{\lambda}+b\right)\dual{\tilde E'(\l,P_b)}{\Lambda P_b}
\\+(b_s+b^2-\theta)\dual{i\tilde E'(\l,P_b)}{\frac{|y|^2}{4}P_b}
+\dual{i\tilde E'(\l,P_b)}{\Psi_Ke^{-i\frac{b|y|^2}{4}}}.
\end{multline}
We now integrate by parts to estimate
\begin{equation}\label{eq:calc-ener3}
\dual{\tilde E'(\l,P_b)}{\Lambda P_b}=\int |\nabla P_b|^2-2\int F(P_b)-\frac{d(p-1)}{2}\int G(P_b)=2\tilde E(\lambda,P_b)+\alpha\lambda^\alpha\int G(P_b),
\end{equation}
where we have used $\alpha=2-\frac{d(p-1)}{2}$, from which:
\bee
\frac{d}{ds} E(P_{b,\lambda,\gamma})&= & \frac{1}{\l^2}\left[-2\frac{\lambda_s}{\lambda}\tilde E(\lambda,P_b)-\alpha\lambda^\alpha\lsl\int G(P_b)+\lsl\left[2\tilde E(\lambda,P_b)+\alpha\lambda^\alpha\int G(P_b)\right]\right]\\
& + & \frac{1}{\l^2}O\left(\left|\lsl+b\right|+|b_s+b^2-\theta|+(b^2+\lambda^\alpha)^{K+2}\right).
\eee
The estimate \eqref{dener} on the time-derivative of the energy then follows from \eqref{eq:calc-ener1}, \eqref{eq:calc-ener2}, \eqref{eq:calc-ener3}, and \eqref{eq:error-term-estimate}. \\
Next,
\begin{align*}
\lambda^2 E(P_{b,\lambda,\gamma})&=\frac 12\int|\nabla P_b|^2-\int F(P_b)-\lambda^\alpha\int G(P_b)\\
&=\frac 12\int |\nabla P|^2+\frac{b^2}{8}\int|y|^2|P|^2-\int F(P)-\lambda^\alpha\int G(P).
\end{align*}
Thus, replacing $P=Q+\lambda^\alpha Z$,
\begin{align*}
\lambda^2 E(P_{b,\lambda,\gamma})
&=\frac 12\int |\nabla Q|^2-\int F(Q)+\frac{b^2}{8}\int|y|^2Q^2-\lambda^\alpha\int G(Q)\\
& +\lambda^\alpha \int (-\Delta Q-f(Q)){\rm Re}Z-\lambda^{2\alpha}\int g(Q){\rm Re}Z+\frac{b^2}{4}\lambda^\alpha\int|y|^2Q{\rm Re}Z\\
&+\frac{\lambda^{2\alpha}}2\int|\nabla Z|^2+\frac{b^2\lambda^{2\alpha}}{8}\int|y|^2|Z|^2
-\int\left\{F(Q+\lambda^\alpha Z)-F(Q)-\lambda^\alpha f(Q){\rm Re}Z\right\}\\
&-\lambda^\alpha\int\left\{G(Q+\lambda^\alpha Z)-G(Q)-\lambda^\alpha g(Q){\rm Re}Z\right\}.
\end{align*}
On the one hand, we recall that from Pohozaev identity,
\[
\frac 12\int |\nabla Q|^2-\int F(Q)=0,
\]
and from the definition \eqref{def-beta} of $\beta_{0,0}$, 
\[
\int G(Q)=\frac{\beta}{2d(p-1)}\int |y|^2Q^2=\frac {\beta}{4(2-\alpha)}\int|y|^2Q^2
\]
 and moreover
\[
\Delta Q+f(Q)=Q.
\]
On the other hand, we observe, since $\int P_{0,0}^+ Q=0$,
$$\lambda^{\alpha} \int ZQ = \lambda^\alpha \sum_{(j,k)\in \Sigma_K,j+k\geq 1} b^{2j}\lambda^{k\alpha}\eta_{j,k}^{\rm I},$$
for some $\eta_{j,k}^{\rm I}\in \R$;
$$\lambda^{2\alpha}\int Zg(Q)=\lambda^{\alpha}\sum_{(j,k)\in \Sigma_K,k\geq 1}b^{2j}\lambda^{k\alpha}\eta_{j,k}^{\rm II},$$
for some $\eta_{j,k}^{\rm II}\in \R$;
$$\lambda^{\alpha}b^2\int |y|^2Q {\rm Re}Z=\lambda^{\alpha}\sum_{(j,k)\in \Sigma_K, j\geq 1}b^{2j}\lambda^{k\alpha}\eta_{j,k}^{\rm III},$$
for some $\eta_{j,k}^{\rm III}\in \R$;
$$\lambda^{2\alpha}\int |\nabla Z|^2+\frac{b^2\lambda^{2\alpha}}{8}\int |y|^2Z^2
=\lambda^{\alpha}\sum_{(j,k)\in \Sigma_K,j\geq 1,k\geq 0}b^{2j}\lambda^{k\alpha}\eta_{j,k}^{\rm IV},$$
for some $\eta_{j,k}^{\rm IV}\in \R$.
Moreover, by Taylor expansion as before, for some $\eta_{j,k}^{\rm V},\eta_{j,k}^{\rm IV}\in \R$
\begin{align*}
 \left|\int\left\{F(Q+\lambda^\alpha Z)-F(Q)-\lambda^\alpha f(Q){\rm Re}Z 
-\lambda^{\alpha}\sum_{(j,k)\in \Sigma_K, k\geq 1}b^{2j}\lambda^{k\alpha}\eta_{j,k}^{\rm V} \right\}\right| 
 \lesssim \lambda^{(K+2)\alpha},\end{align*}
\begin{align*}
  \left|\lambda^\alpha\int\left\{G(Q+\lambda^\alpha Z)-G(Q)-\lambda^\alpha g(Q){\rm Re}Z 
-\lambda^{\alpha}\sum_{(j,k)\in \Sigma_K,k\geq 2}b^{2j}\lambda^{k\alpha}\eta_{j,k}^{\rm VI} \right\}\right| 
  \lesssim \lambda^{(K+2)\alpha}.\end{align*}
Gathering these computations, we obtain \eqref{eener}.
\end{proof}

\subsection{Approximate blow up law}
For simplicity of notation, we set 
$$\beta=\beta_{0,0}= \frac{2d(p-1)}{p+1}\frac{\norm{Q}_{p+1}^{p+1}}{\norm{yQ}_2^2}.$$
First, we  find a relevant solution to   the following approximate  system
\begin{equation}\label{eq:sys}
b_s+b^2-\beta\lambda^{\alpha}=0,\quad
b+\frac{\lambda_s}{\lambda}=0.
\end{equation}
Indeed, for $|b|+\lambda \ll 1$,  $\beta\lambda^\alpha$ is the main term in $\theta$, and the only term in $\theta$ that will modify at the main   order the blow up rate.

\begin{lemma}
Let
\begin{equation}\label{eq:dbu}
\lambda_{\rm app}(s)=\left(\frac\alpha2\sqrt{\frac{2\beta}{2-\alpha}}\right)^{-\frac2\alpha} s^{-\frac2\alpha},
\quad b_{\rm app}(s)={\frac2{\alpha s}}.
\end{equation}
Then $(\lambda_{\rm app}(s),b_{\rm app}(s))$ solves \eqref{eq:sys} for $s>0$.
\end{lemma}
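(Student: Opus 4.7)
The statement is a direct algebraic verification, so the plan is simply to plug the ansatz into the two equations of \eqref{eq:sys} and check that each vanishes identically; there is no real obstacle, only bookkeeping.

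First I would handle the second equation. Since $\lambda_{\rm app}(s) = C_0 s^{-2/\alpha}$ with $C_0 = \bigl(\tfrac{\alpha}{2}\sqrt{\tfrac{2\beta}{2-\alpha}}\bigr)^{-2/\alpha}$, differentiating in $s$ gives $\lambda_s/\lambda = -\tfrac{2}{\alpha s}$, which is exactly $-b_{\rm app}(s)$. Hence $b_{\rm app} + \lambda_{{\rm app},s}/\lambda_{\rm app} = 0$.

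Next I would verify the first equation. Differentiating $b_{\rm app}(s) = 2/(\alpha s)$ gives $b_{{\rm app},s} = -2/(\alpha s^2)$, and $b_{\rm app}^2 = 4/(\alpha^2 s^2)$, so
\[
b_{{\rm app},s} + b_{\rm app}^2 = \frac{-2\alpha + 4}{\alpha^2 s^2} = \frac{2(2-\alpha)}{\alpha^2 s^2}.
\]
On the other hand, by the choice of the constant $C_0$,
\[
\lambda_{\rm app}^\alpha(s) = \Bigl(\tfrac{\alpha}{2}\sqrt{\tfrac{2\beta}{2-\alpha}}\Bigr)^{-2} s^{-2} = \frac{4}{\alpha^2}\,\frac{2-\alpha}{2\beta}\, s^{-2},
\]
so that $\beta \lambda_{\rm app}^\alpha = \frac{2(2-\alpha)}{\alpha^2 s^2}$, which matches $b_{{\rm app},s}+b_{\rm app}^2$ exactly. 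This is precisely the reason for the exponent $-2/\alpha$ in $\lambda_{\rm app}$ and for the particular multiplicative constant: the $s^{-2}$ scaling of all three terms $b_s$, $b^2$ and $\beta\lambda^\alpha$ is forced by the first equation once the second one is imposed, and the constant $C_0$ is then fixed to balance the coefficients. Combining these two computations yields the claim.
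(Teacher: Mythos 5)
Your verification is correct: both equations of \eqref{eq:sys} are checked by direct substitution, and the algebra (the cancellation $b_{{\rm app},s}+b_{\rm app}^2=\tfrac{2(2-\alpha)}{\alpha^2 s^2}=\beta\lambda_{\rm app}^\alpha$) is exactly right. The route is, however, different from the paper's: rather than verifying a given ansatz, the paper \emph{derives} it by first computing $\bigl(\tfrac{b^2}{\lambda^2}\bigr)_s$ along solutions of \eqref{eq:sys}, obtaining the first integral $\tfrac{b^2}{\lambda^2}-\tfrac{2\beta}{2-\alpha}\lambda^{\alpha-2}=c_0$, then setting $c_0=0$ and integrating $b=-\lambda_s/\lambda$ to produce the explicit power law. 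For the lemma as stated, your substitution argument is shorter and entirely sufficient. What the paper's derivation buys is context that is used later: the same conserved quantity with $c_0=C_0\neq 0$ is precisely the functional $\mathcal{E}(b,\lambda)$ of \eqref{defee} whose value is prescribed in Lemma \ref{le:bu}, and its integrated form underlies the function $\mathcal{F}$ in \eqref{defmF} used to close the bootstrap for the perturbed system in Lemma \ref{lem:bootstrap}; your closing remark that the exponent and constant are ``forced'' captures this heuristically, and the paper's computation is the systematic version of that observation.
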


\begin{proof}
We compute:
$$
\left(\frac{b^2}{\lambda^2}\right)_s=2\frac{b}{\lambda}\frac{b_s+b^2}{\lambda}
=-2\beta\frac{\lambda_s}{\lambda}\lambda^{\alpha-2}
,$$ 
and so
\begin{equation}\label{choice}
\frac{b^2}{\lambda^2}-\frac{2\beta}{2-\alpha}\lambda^{\alpha-2}=c_0.
\end{equation}
Taking the constant $c_0=0$, and using $b=-\frac{\lambda_s}{\lambda}>0$, we find
$$
\frac{\lambda_s}{\lambda^{1+\frac{\alpha}{2}}}=\sqrt{\frac{2\beta}{2-\alpha}}.
$$
Therefore,
\[
 \lambda(s)=\left(\frac\alpha2\sqrt{\frac{2\beta}{2-\alpha}}\right)^{-\frac2\alpha} s^{-\frac2\alpha},
\quad b(s)=-\frac{\lambda_s}{\lambda}(s)={\frac2\alpha}\frac 1s\]
is solution of \eqref{eq:sys}.
\end{proof}

\begin{remark}\label{rk:9}
We now express this solution in the  time variable $t_{\rm app}$ related to $\lambda_{\rm app}$. Let
\[
 {dt_{\rm app}}= {\lambda_{\rm app}^2} ds=\left(\frac\alpha2\sqrt{\frac{2\beta}{2-\alpha}}\right)^{-\frac4\alpha} s^{-\frac4\alpha} ds. 
\]
Therefore (with the convention that $t_{\rm app}\to 0^-$ as $s\to +\infty$)
\be
\label{deftapp}
t_{\rm app} =- C_s s^{-\frac {4-\alpha}{\alpha}} \quad \hbox{where} \quad C_s =  \frac \alpha{4-\alpha}\left(\frac\alpha2\sqrt{\frac{2\beta}{2-\alpha}}\right)^{-\frac4\alpha}.
\ee
As a consequence, we obtain for $t_{\rm app}<0$, 
\begin{equation}\label{Cbumille}
\lambda_{\rm app}(t_{\rm app})=C_\lambda |t_{\rm app}|^{\frac{2}{4-\alpha}} \quad \hbox{where} \quad 
C_\lambda  =\left(\frac {4-\alpha}{\alpha} C_s^{-\frac {\alpha}{4-\alpha}}\right)^{\frac 12},
\end{equation} 
\begin{equation}\label{Cbu2}
b_{\rm app}(t_{\rm app})=C_b|t_{\rm app}|^{\frac{\alpha}{4-\alpha}}, \quad
\hbox{where} \quad C_b = \frac 2{\alpha} C_s^{-\frac \alpha{4-\alpha}}.
\end{equation}
\end{remark}

Now, we choose suitable initial conditions $b_1$ and $\lambda_1$ for $b(s)$ and $\lambda(s)$ at some large time $s_1$, first to adjust the value of the energy of $P_{b,\lambda,\gamma}$ (up to  the small error term in \eqref{eener}) and second to be able to close the perturbed dynamical system of $(\lambda,b)$ at the end of the proof (see proof of Lemma \ref{lem:bootstrap} below).
Let $E_0\in \R$ and
\begin{equation*}
C_0=\frac{8E_0}{\int |y|^2Q^2}.
\end{equation*}
Fix $0<\lambda_0\ll 1$  such that $\frac {2\beta}{2-\alpha} + C_0 \lambda_0^{2-\alpha} >0$.
For $\lambda\in (0,\lambda_0]$, let
\begin{equation}\label{defmF}
\mathcal F(\lambda)=\int_\lambda^{\lambda_0}\frac{d\mu}{\mu^{\frac{\alpha}{2}+1}\sqrt{\frac{2\beta}{2-\alpha}+C_0\mu^{2-\alpha}}}.
\end{equation}
Note that the function $\mathcal F$ is related to the resolution of the system \eqref{choice}
for $c_0=C_0$, see proof of Lemma \ref{lem:bootstrap}.

\begin{lemma}\label{le:bu}
Let $s_1\gg1$. There exist $b_1$ and $\lambda_1$ such that
\begin{align}
\label{app_ini}
& \left|\frac {\lambda_1^{\frac \alpha 2}}{\lambda_{\rm app}^{\frac \alpha 2}(s_1)} - 1 \right|
+ \left|\frac{b_1}{b_{\rm app}(s_1)}-1 \right|\lesssim s_1^{-\frac 12}+ s_1^{2-\frac 4\alpha},
\\& 
\mathcal{F}(\lambda_1)=s_1,\quad 
\mathcal{E}(b_1,\lambda_1)=C_0.\label{app_ini2}
\end{align}
\end{lemma}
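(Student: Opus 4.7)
The plan is to define $\lambda_1$ and $b_1$ by two successive implicit relations: first $\mathcal{F}(\lambda_1) = s_1$ determines $\lambda_1 \in (0, \lambda_0)$, then $\mathcal{E}(b_1, \lambda_1) = C_0$ determines $b_1$. Once this is done, the asymptotic bound \eqref{app_ini} will follow by comparing each defining equation with the unperturbed analogue that is solved explicitly by $(\lambda_{\rm app}(s_1), b_{\rm app}(s_1))$.

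I first observe that, with $\lambda_0$ chosen as in the statement so that $2\beta/(2-\alpha) + C_0 \lambda_0^{2-\alpha} > 0$, the integrand in \eqref{defmF} is smooth and positive on $(0, \lambda_0]$ and behaves like $\sqrt{(2-\alpha)/(2\beta)}\,\mu^{-1-\alpha/2}$ as $\mu \to 0^+$, which fails to be integrable at $0$. Therefore $\mathcal{F} : (0, \lambda_0] \to [0, +\infty)$ is a strictly decreasing $C^1$-bijection, and for all $s_1$ sufficiently large the equation $\mathcal{F}(\lambda_1) = s_1$ admits a unique solution $\lambda_1 \in (0, \lambda_0)$. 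To extract the asymptotics, I would expand
\[
\frac{1}{\sqrt{\tfrac{2\beta}{2-\alpha}+C_0\mu^{2-\alpha}}}
= \sqrt{\tfrac{2-\alpha}{2\beta}}\bigl(1 + O(\mu^{2-\alpha})\bigr),
\]
integrate, and compare with $\mathcal{F}_0(\lambda) := \sqrt{(2-\alpha)/(2\beta)}\,(2/\alpha)(\lambda^{-\alpha/2}-\lambda_0^{-\alpha/2})$, whose inverse reproduces \eqref{eq:dbu} modulo the base-point term $\lambda_0^{-\alpha/2}$. The difference $\mathcal{F}(\lambda)-\mathcal{F}_0(\lambda) = O(1) + O(\lambda^{2-3\alpha/2})$ then translates, via $\lambda_{\rm app}^{-\alpha/2}(s_1) = (\alpha/2)\sqrt{2\beta/(2-\alpha)}\,s_1$, into the bound
\[
\frac{\lambda_1^{\alpha/2}}{\lambda_{\rm app}^{\alpha/2}(s_1)} - 1 = O(s_1^{-1}) + O\bigl(\lambda_{\rm app}^{2-\alpha}(s_1)\bigr) = O(s_1^{-1}) + O(s_1^{2-4/\alpha}),
\]
which is the first half of \eqref{app_ini}.

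Next, with $\lambda_1$ fixed, I would solve for $b_1$ by rewriting $\mathcal{E}(b_1,\lambda_1)=C_0$, after multiplication by $\lambda_1^2$, as
\[
b_1^2 \bigl(1 + \eta_{1,0}\lambda_1^\alpha + R_2(b_1^2,\lambda_1^\alpha)\bigr)
= \tfrac{2\beta}{2-\alpha}\lambda_1^\alpha + C_0\lambda_1^2 + \lambda_1^{2\alpha} R_3(b_1^2,\lambda_1^\alpha),
\]
where $R_2, R_3$ are polynomials in $(b_1^2, \lambda_1^\alpha)$ vanishing at the origin, coming from the sum in \eqref{defee}. For $\lambda_1$ small, the right-hand side is strictly positive and the left-hand side is a smooth, strictly increasing function of $b_1^2 \geq 0$ in a neighborhood of $(2\beta/(2-\alpha))\lambda_1^\alpha$, so a standard contraction/implicit function argument produces a unique positive root $b_1$ satisfying
\[
b_1^2 = \tfrac{2\beta}{2-\alpha}\lambda_1^\alpha \bigl(1 + O(\lambda_1^{\min(\alpha,2-\alpha)})\bigr).
\]
Since $b_{\rm app}^2(s_1) = (2\beta/(2-\alpha))\lambda_{\rm app}^\alpha(s_1)$ (this is \eqref{choice} with $c_0 = 0$), combining with the estimate on $\lambda_1/\lambda_{\rm app}(s_1)$ from the previous paragraph, then taking square roots, yields the second half of \eqref{app_ini}.

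The main obstacle is purely bookkeeping: one must trace carefully which power of $\lambda_{\rm app}(s_1)$ (equivalently of $1/s_1$) is produced by (a) the Taylor remainder of the square root in $\mathcal{F}$, (b) the base-point contribution $\lambda_0^{-\alpha/2}$, and (c) the higher-order terms $\eta_{j,k}b^{2j}\lambda^{k\alpha}$ in $\mathcal{E}$, then check that all these contributions are absorbed by the single bound $s_1^{-1/2} + s_1^{2-4/\alpha}$ announced in the lemma. No analysis beyond the implicit function theorem and elementary Taylor expansion is required, and the two equations decouple because $\lambda_1$ is fixed before $b_1$ is determined.
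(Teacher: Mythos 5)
Your proposal is correct and follows essentially the same route as the paper: solve $\mathcal F(\lambda_1)=s_1$ by monotonicity of $\mathcal F$, compare $\mathcal F(\lambda)$ with the pure power $\frac{2}{\alpha}\sqrt{\tfrac{2-\alpha}{2\beta}}\,\lambda^{-\alpha/2}$ to get the estimate on $\lambda_1/\lambda_{\rm app}(s_1)$, and then solve $\mathcal E(b_1,\lambda_1)=C_0$ by a perturbative/implicit-function argument around $b^2\approx\tfrac{2\beta}{2-\alpha}\lambda_1^{\alpha}=b_{\rm app}^2$ (the paper phrases this via $h(b)=\lambda_1^2\mathcal E(b,\lambda_1)$ and $b_{\rm app}(s_1)$, which is the same computation). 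The only minor imprecision is that your intermediate bound $\mathcal F(\lambda)-\mathcal F_0(\lambda)=O(1)+O(\lambda^{2-3\alpha/2})$ misses a logarithmic factor at the borderline case $\alpha=\tfrac43$; this is harmless for \eqref{app_ini} since the resulting $O(s_1^{-1}\log s_1)$ is absorbed by $s_1^{-1/2}$, and the paper avoids the issue by using the cruder bound $\lambda^{-\alpha/4}+\lambda^{2-3\alpha/2}$ in \eqref{using}.
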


\begin{proof}
First, we choose $\lambda_1$. Note that $\mathcal F$ is a decreasing function of $\lambda$ satisfying
$\mathcal F(\lambda_0)=0$ and $\lim_{\lambda \downarrow 0} \mathcal F(\lambda)=+\infty$.
Thus, there exists a unique $\lambda_1\in (0,\lambda_0)$ such that
$\mathcal F(\lambda_1)=s_1$.

For $\lambda\in (0,\lambda_0]$,
\begin{align*}
\left|\mathcal F(\lambda)-\frac{2}{\alpha\sqrt{\frac{2\beta}{2-\alpha}} \lambda^{\frac{\alpha}{2}}} \right|&\lesssim 1+\left|\int_\lambda^{\lambda_0}\frac{d\mu}{\mu^{\frac{\alpha}{2}+1}}\left[\frac{1}{\sqrt{\frac{2\beta}{2-\alpha}+C_0\mu^{2-\alpha}}}-\frac{1}{\sqrt{\frac{2\beta}{2-\alpha}}}\right]\right|\nonumber\\
&\lesssim 1+\int_\lambda^{\lambda_0}\frac{d\mu}{\mu^{1+\frac{\alpha}{2}-(2-\alpha)}}.
\end{align*}
Thus,
\begin{equation*}
\left|\mathcal F(\lambda)-\frac{2}{\alpha\sqrt{\frac{2\beta}{2-\alpha}} \lambda^{\frac{\alpha}{2}}} \right| 
\lesssim \left\{\begin{array}{ll}
 1  & \hbox{for $\alpha\in (0,\frac 43)$,}\\
 |\log \lambda| &\hbox{for $\alpha=\frac 43$,}\\
 \lambda^{2-\frac {3\alpha}2}&\hbox{for $\alpha\in (\frac 43,2)$.}
 \end{array}\right.\end{equation*}
To simplify, we will use the non sharp but sufficient estimate
\begin{equation}\label{using}
	\left|\mathcal F(\lambda)-\frac{2}{\alpha\sqrt{\frac{2\beta}{2-\alpha}} \lambda^{\frac{\alpha}{2}}} \right| \lesssim \lambda^{-\frac \alpha 4}+ \lambda^{2-\frac {3\alpha}2}.
\end{equation}
Applied to $\lambda_1$, it gives
$$
\left|s_1 - \frac{2}{\alpha\sqrt{\frac{2\beta}{2-\alpha}} \lambda_1^{\frac{\alpha}{2}}} \right|
\lesssim \lambda_1^{-\frac \alpha 4}+ \lambda_1^{2-\frac {3\alpha}2}\quad \hbox{and thus}\quad
\left|\frac {\lambda_1^{\frac \alpha 2}}{\lambda_{\rm app}^{\frac \alpha 2}(s_1)} - 1 \right|
\lesssim s_1^{-\frac12}+ s_1^{2-\frac 4\alpha}.
$$

Second, we choose $b_1$.
From the definition of $\mathcal{E}$, we have
\begin{align*}
h(b):=\lambda_1^2 \mathcal{E}(b,\lambda_1)
& =b^2-\left(\frac{2}{\alpha s_1}\right)^2 - \frac {2\beta}{2-\alpha} \left(\lambda_1^\alpha
-\lambda_{\rm app}^\alpha(s_1)\right)
+\lambda_1^\alpha \sum_{(j,k)\in\Sigma_K,\  j+k\geq 1} b^{2j}\lambda_1^{-k\alpha}  \eta_{j,k}\\
& =b^2-\left(\frac{2}{\alpha s_1}\right)^2  + O(s_1^{-\frac 52}) +O(s_1^{-\frac 4\alpha}).
\end{align*}
Observe that
$$
|h(b_{\rm app}(s_1))|\lesssim  s_1^{-\frac 4\alpha},\quad |h'(b_{\rm app}(s_1))| \geq 2b_{\rm app}(s_1)+O(s_1^{-3})
\geq {s_1^{-1}}.
$$
Since $\lambda_1^2\approx s_1^{-\frac 4\alpha}$, it follows that
 there exists a unique $b_1$ such that
$$
|b_1-b_{\rm app}(s_1)|\lesssim    s_1^{-\frac 32}+ s_1^{1-\frac 4\alpha},
\quad h(b_1)=C_0 \lambda_1^2,
$$
and so
$
\mathcal{E}(b_1,\lambda_1)= C_0 .
$
\end{proof}

\section{Existence proof assuming uniform  estimates}

This section is devoted to the proof of Theorem \ref{thm:1} by a compactness argument, assuming uniform  estimates on specific solutions of \eqref{eq:nls}. These estimates are given  in   Proposition \ref{prop:uniform-estimates-rescaled-time}.  

\subsection{Uniform estimates in rescaled time variable}

The rescaled time depending on a suitable modulation of the solution $u(t)$, we first recall without proof the following standard result (see e.g. \cite{MeRa05}).

\begin{lemma}[Modulation]\label{lem:modulation} Let $u(t)\in \mathcal C(I,H^1(\R^d))$ for some interval $I$, be  such that
\begin{equation}\label{close}
\sup_{t\in I}\inf_{\lambda_0>0,\gamma_0}\left\|\lambda_0^{\frac d2} u(t,\lambda_0 y)e^{i\gamma_0}- Q(y) \right\|_{H^1} \leq \delta,
\end{equation}
for $\delta>0$ small enough.
Then, there exist $\mathcal C^1$ functions $\lambda\in(0,+\infty)$, $b\in\R$, $\gamma\in\R$ on $I$ such that $u$ admits 
 a unique decomposition of the form 
\begin{equation}\label{eq:modulation}
u(t,x)=\frac{1}{\lambda^{\frac{d}{2}}(t)}\left(P_{b(t)}+\eps(t,y)\right)e^{i\gamma(t)},\qquad 
y=\frac{x}{\lambda(t)}
\end{equation}
where $\eps$ satisfies  the following orthogonality conditions  on $I$
($\rho_b(t,y)=\rho(y)e^{-i\frac{b(t)|y|^2}{4}}$)
\begin{equation}\label{eq:ortho}
\psld{\eps}{i\Lambda P_b}=\psld{\eps}{|y|^2P_b}=\psld{\eps}{i\rho_b }=0.
\end{equation} 
\end{lemma}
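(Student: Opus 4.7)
Since this is a static modulation statement, the natural tool is the implicit function theorem applied pointwise in $t$. First I would fix $t\in I$ and use the closeness hypothesis \eqref{close} to pick a reference triple $(\lambda_0,0,\gamma_0)$ with $\|\lambda_0^{d/2}u(t,\lambda_0\cdot)e^{i\gamma_0}-Q\|_{H^1}\leq \delta$. Set
\[
\eps_{(\lambda,b,\gamma)}(y):=\lambda^{d/2} u(t,\lambda y)e^{-i\gamma}-P_b(y),
\]
and define
\[
\Phi_t(\lambda,b,\gamma):=\bigl( (\eps,i\Lambda P_b)_2,\ (\eps,|y|^2 P_b)_2,\ (\eps,i\rho_b)_2\bigr)\in\R^3.
\]
At the reference point one has $\eps=O_{H^1}(\delta)$ and $P_b|_{b=0}=Q+O(\lambda^\alpha)$, so $\Phi_t(\lambda_0,0,\gamma_0)=O(\delta)$. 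The plan is to show that the Jacobian of $\Phi_t$ at $(\lambda_0,0,\gamma_0)$ is invertible up to a uniform lower bound, then invoke IFT.

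The key computation is the Jacobian. Using
\[
\lambda\partial_\lambda \eps=\Lambda(P_b+\eps),\quad \partial_\gamma\eps=-i(P_b+\eps),\quad \partial_b\eps=-\partial_b P_b,
\]
and evaluating at $(\lambda_0,0,\gamma_0)$ (so $P_b\simeq Q$ and $\partial_b P_b|_{b=0}\simeq -\tfrac{i|y|^2}{4}Q$ at leading order), the real $L^2$ pairings reduce the Jacobian to
\[
J\simeq
\begin{pmatrix}
0 & -\tfrac14\|yQ\|_2^2 & 0\\[2pt]
-\|yQ\|_2^2 & 0 & 0\\[2pt]
0 & \tfrac14\psld{|y|^2Q}{\rho} & -\psld{Q}{\rho}
\end{pmatrix},
\]
where I use the standard parity/scaling identities $\psld{\Lambda Q}{Q}=0$, $\psld{\Lambda Q}{|y|^2Q}=-\|yQ\|_2^2$, and the vanishing of real/imaginary pairings for real $Q$. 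Thus $\det J\simeq \tfrac18\|yQ\|_2^4\cdot \psld{Q}{\rho}$. The nondegeneracy then follows from the relation $\psld{Q}{\rho}=-\tfrac12\psld{L_+\Lambda Q}{\rho}=-\tfrac12\psld{\Lambda Q}{L_+\rho}=-\tfrac12\psld{\Lambda Q}{|y|^2Q}=\tfrac12\|yQ\|_2^2\neq 0$, using \eqref{algebra}. This is the step I expect to be the main (though entirely algebraic) obstacle, since all the information of the three chosen orthogonality directions is packed into making $J$ triangularizable with nonzero diagonal minors.

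With $|\det J|\gtrsim 1$ and $\Phi_t(\lambda_0,0,\gamma_0)=O(\delta)$, the quantitative IFT yields, for $\delta$ small, a unique zero $(\lambda(t),b(t),\gamma(t))$ of $\Phi_t$ in a neighborhood of $(\lambda_0,0,\gamma_0)$ whose size is independent of $t$; this gives \eqref{eq:modulation}--\eqref{eq:ortho}. Local uniqueness in the IFT combined with a continuity-in-$t$ argument (patching neighborhoods along $I$) upgrades this to uniqueness of the decomposition for the chosen $\delta$-tube around the orbit of $Q$. Finally, the $\mathcal C^1$ regularity of $(\lambda,b,\gamma)$ with respect to $t$ is read off from the IFT: since the map $(\lambda,b,\gamma,u)\mapsto \Phi(\lambda,b,\gamma,u)$ is smooth in the first three variables and $\mathcal C^1$ in $u\in H^1$ (quantities like $(u,\lambda^{-d/2}iX(\cdot/\lambda)e^{i\gamma})_2$ are linear in $u$ with $H^{-1}$ test functions), the implicit relation $\Phi(\lambda,b,\gamma,u(t))=0$ transfers the regularity of $t\mapsto u(t)$ into the modulation parameters, which is all that is needed.
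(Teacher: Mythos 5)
Your implicit-function-theorem argument is precisely the standard modulation proof that the paper invokes without proof (citing \cite{MeRa05}): same choice of functionals, same leading-order Jacobian in the directions $\lambda\partial_\lambda,\partial_b,\partial_\gamma$, and the key nondegeneracy $\psld{Q}{\rho}=\tfrac12\norm{yQ}_2^2\neq 0$ obtained from \eqref{algebra} and the self-adjointness of $L_+$ is the standard computation. Apart from two harmless slips — the exact identity is $\lambda\partial_\lambda\eps=\Lambda(P_b+\eps)-\lambda\partial_\lambda P_b$, the extra term being $O(\lambda^\alpha)$ and thus negligible in the regime $|b|+\lambda\ll1$ where the lemma is used, and your matrix actually gives $\det J\simeq\tfrac14\norm{yQ}_2^4\,\psld{Q}{\rho}$ rather than $\tfrac18$, which only needs to be nonzero anyway — the proposal is correct.
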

See \eqref{eq:def-P_b} for the definition of $P_b$.

\bigskip

Let $E_0\in \R$. Given $t_1<0$ close to $0$, following Remark \ref{rk:9}, we define the initial rescaled time $s_1$ as 
\begin{equation*}
s_1:=\left|C_s^{-1}t_1\right|^{-\frac{\alpha}{4-\alpha}}.
\end{equation*} 
Let $\l_1$ and $ b_1$ be given by Lemma \ref{le:bu} for this value of $s_1$. Let $u(t)$ be the solution of $\eqref{eq:nls}$ for $t\leq t_1$,  with data 
\begin{equation}\label{eq:final-data}
u(t_1,x)=\frac{1}{\lambda_1^{\frac{d}{2}}}P_{b_1}\left(\frac{x}{\lambda_1}\right).
\end{equation}
As long as the solution $u(t)$ satisfies  \eqref{close}, we consider its decomposition $(\lambda,b,\gamma,\varepsilon)$ from Lemma \ref{lem:modulation} and we define the rescaled time $s$ by 
\be
\label{defst}
s= s_1-\int_{t}^{t_1} \frac{1}{\lambda^2(\tau)}d\tau.
\ee
The heart of the proof of Theorem \ref{thm:1} is the following result, giving uniform backwards estimates on the decomposition of $u(s)$   on $[s_0,s_1]$ for some $s_0$ independent of $s_1$.
 
\begin{proposition}[Uniform estimates in rescaled time]\label{prop:uniform-estimates-rescaled-time}
There exists $s_0>0$ independent of $s_1$ such that
the solution $u$ of $\eqref{eq:nls}$ defined by \eqref{eq:final-data}
exists and  satisfies \eqref{close} on $[s_0,s_1]$. Moreover, its    decomposition
\begin{equation*}
u(s,x)=\frac{1}{\lambda^{\frac{d}{2}}(s)}
\left(P_b+\eps\right)\left(s,y\right) e^{i\gamma(s)},
\qquad 
y=\frac{x}{\lambda(s)}, 
\end{equation*}
satisfies the following uniform estimates on $ [s_0,s_1]$,
\begin{equation}\label{eq:smallness-1-s}
\norm{\eps(s)}_{H^1}\lesssim s^{-(K+1)},\quad
\left|\frac{\lambda^{\frac\alpha 2}(s)}{\lambda_{\rm app}^{\frac\alpha 2}(s)}-1\right|+
\left|\frac{b(s)}{b_{\rm app}(s)}-1\right|  
\lesssim s^{-\frac 12}+s^{2-\frac 4\alpha}.
\end{equation}
In addition, 
\begin{equation*}
|E(P_{b,\lambda,\gamma}(s))-E_0|\leq O(s^{-6}).
\end{equation*}
\end{proposition}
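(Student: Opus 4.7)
The plan is a bootstrap argument running backwards in $s$ from $s_1$. By \eqref{eq:final-data} we have $\eps(s_1)=0$, while Lemma \ref{le:bu} ensures $\mathcal{E}(b_1,\lambda_1)=C_0$ together with the smallness \eqref{app_ini} of $(b_1,\lambda_1)$ with respect to $(b_{\rm app}(s_1),\lambda_{\rm app}(s_1))$. Fix a large constant $K^\star>0$ and a time $s_0\gg 1$ to be chosen independently of $s_1$, and let $s_\star\in[s_0,s_1]$ be the infimum of times on which the three target bounds of Proposition \ref{prop:uniform-estimates-rescaled-time} hold with constant $K^\star$ on $[s_\star,s_1]$. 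Continuity of the $H^1$ flow combined with Lemma \ref{lem:modulation} makes $[s_\star,s_1]$ nonempty near $s_1$. The goal is to strictly improve each of the three bounds on $[s_\star,s_1]$, which by a standard continuity argument forces $s_\star=s_0$.

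The first step is to derive the evolution equation for $\eps$ by inserting \eqref{eq:modulation} into \eqref{eq:nls} and subtracting \eqref{eq:P_b}. Differentiating the orthogonalities \eqref{eq:ortho} in $s$ and inverting the Gram matrix built from $\Lambda P_b$, $|y|^2 P_b$, $\rho_b$ (whose non-degeneracy at $b=0$ follows from \eqref{algebra}), one extracts modulation bounds of the form
\begin{equation*}
\Big|\lsl+b\Big|+\left|b_s+b^2-\theta\right|+|\gamma_s-1|\lesssim \|\eps\|_{H^1_{\mathrm{loc}}}+\|\Psi_K\|_{H^1_{\mathrm{loc}}},
\end{equation*}
which by \eqref{eq:error-term-estimate} and the bootstrap reduce to $O(s^{-(K+1)})$.

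The second step is to set up a Lyapunov functional tailored to the linearization of $E$ around $P_{b,\lambda,\gamma}$:
\begin{equation*}
\mathcal{H}(s)=\frac12\int|\nabla\eps|^2+\frac12\int|\eps|^2 -\int\bigl[F(P_b+\eps)-F(P_b)-\Re(\overline{f(P_b)}\eps)\bigr] -\lambda^\alpha\int\bigl[G(P_b+\eps)-G(P_b)-\Re(\overline{g(P_b)}\eps)\bigr].
\end{equation*}
Via \eqref{eq:coercivity}, together with the three orthogonality conditions \eqref{eq:ortho}, $\mathcal{H}$ controls $\|\eps\|_{H^1}^2$ up to the three scalar projections $\psld{\eps_1}{Q}$, $\psld{\eps_1}{|y|^2Q}$ and $\psld{\eps_2}{\rho}$. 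These are recovered respectively from: mass conservation $\|u(s)\|_2=\|Q\|_2$ combined with \eqref{dmass}; the orthogonality on $|y|^2 P_b$ after unfolding the $e^{-ib|y|^2/4}$ twist and using the smallness of $b$; and the orthogonality on $i\rho_b$. Computing $d\mathcal{H}/ds$, the bulk cancels by the $\eps$-equation and the remainder is bounded by the modulation estimate above times $\|\eps\|_{H^1}$ plus $\|\Psi_K\|_{H^1}\|\eps\|_{H^1}$. Integrating backwards from $\mathcal{H}(s_1)=0$ yields the improved bound $\|\eps(s)\|_{H^1}\leq \tfrac12 K^\star s^{-(K+1)}$.

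The final step is to close the bootstrap on $(b,\lambda)$ and derive the energy estimate. Energy conservation $E(u(s))\equiv E_0$, the expansion \eqref{eener}, and the $H^1$-bound on $\eps$ give $|E(P_{b,\lambda,\gamma}(s))-E_0|\lesssim s^{-6}$ together with
\begin{equation*}
|\mathcal{E}(b(s),\lambda(s))-C_0|\lesssim s^{-6}.
\end{equation*}
Combined with the modulation equations $\lsl+b=O(s^{-(K+1)})$ and $b_s+b^2-\theta=O(s^{-(K+1)})$, an ODE comparison with the explicit solution $(b_{\rm app},\lambda_{\rm app})$ of \eqref{eq:sys}, on which $\mathcal{E}\equiv 0$ by \eqref{choice}, together with the initial data from Lemma \ref{le:bu}, delivers the improved bound on $|\lambda^{\alpha/2}/\lambda_{\rm app}^{\alpha/2}-1|+|b/b_{\rm app}-1|$. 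The main obstacle is the design of $\mathcal{H}$: it must be chosen so that its $s$-derivative produces no unsigned bulk term and so that the three scalar projections left uncontrolled by \eqref{eq:coercivity} match exactly the quantities recoverable from mass and energy conservation and from the orthogonalities \eqref{eq:ortho}. This is precisely why the ansatz $P$, the $e^{-ib|y|^2/4}$ rotation, and the particular orthogonality directions chosen in Lemma \ref{lem:modulation} must all be coordinated.
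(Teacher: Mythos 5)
Your overall scheme (backwards bootstrap from $s_1$, modulation estimates, coercivity of a linearized energy together with the mass–conservation control of $\psld{\eps}{Q}$, and closing $(b,\lambda)$ through the almost conserved quantity $\mathcal E(b,\lambda)$ and the function $\mathcal F$) is in line with the paper, but the central analytic step --- the energy estimate on $\eps$ --- has a genuine gap. Your functional $\mathcal H$ is exactly the paper's linearized energy $H(s,\eps)$, and the claim that its $s$-derivative consists only of terms bounded by $|\Mod(s)|\,\norm{\eps}_{H^1}+\norm{\Psi_K}\,\norm{\eps}_{H^1}$ plus harmless remainders is false. The $\eps$-equation \eqref{eq:eps} contains the transport term $-i\lsl\Lambda\eps$, which does not cancel against $D_\eps H$: it produces the unsigned bulk term $\lsl\big(\norm{\nabla\eps}_2^2-\dual{f(P_b+\eps)-f(P_b)}{\Lambda\eps}\big)$ (this is precisely Lemma \ref{lem:derivative-H}). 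This term has size $|b|\,\norm{\eps}_{H^1}^2\approx s^{-1}\norm{\eps}_{H^1}^2$, i.e.\ it is of \emph{critical} size: under your bootstrap $\norm{\eps}_{H^1}\le K^\star s^{-(K+1)}$ it is $\approx (K^\star)^2 s^{-2K-3}$, and integrating backwards from $s_1$ only returns $\norm{\eps(s)}_{H^1}\le C\,K^\star s^{-(K+1)}$ with a universal constant $C\ge 1$, so the bootstrap constant is never improved, no matter how large $K$ or $K^\star$. It is controlled neither by $|\Mod|$ nor by $\Psi_K$. This is exactly why the paper does not work with $H$ alone but with the mixed energy/Morawetz functional $S=\lambda^{-4}\left(H+bJ\right)$, $J(\eps)=\frac12\Im\int\nabla\phi_A\cdot\nabla\eps\,\bar\eps$: the virial correction cancels the dangerous nonlinear part (proof of \eqref{eq:nnew}), and together with the weight $\lambda^{-4}$ (which contributes $-4\lsl H\approx 4bH$) the critical terms combine into a quantity with a favorable sign via Lemmas \ref{lem:coercivity-of-H} and \ref{le:9bis}, yielding the monotonicity of Proposition \ref{prop:Lyapunov}. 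Even then, $\norm{\eps}_{H^1}\lesssim s^{-(K+1)}$ does not follow from a direct Gronwall argument but from the two-time comparison of Lemma \ref{lem:bootstrap} (the times $s_\dagger,s_\ddagger$), which uses coercivity, monotonicity of $S$, and the growth $\lambda^{-4}\approx s^{8/\alpha}$ with $K>4/\alpha$. You acknowledge at the end that $\mathcal H$ must be designed so that no unsigned bulk term appears, but the $\mathcal H$ you propose does not have this property; supplying the virial correction, the weight, and the ensuing monotonicity/comparison argument is the missing core of the proof.

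Two secondary points. First, the paper obtains $|E(P_{b,\lambda,\gamma}(s))-E_0|\lesssim s^{-6}$ by integrating \eqref{dener} from the prepared data of Lemma \ref{le:bu}, not by expanding the conserved $E(u)=E_0$; your route can be repaired, but the linear term is weighted by $\lambda^{-2}\approx s^{4/\alpha}$ and must be beaten using the mass-conservation estimate $|\psld{\eps}{P_b}|\lesssim s^{-(K+3)}$ of \eqref{eq:new-ortho-1}, which you do not address. Second, the paper's modulation estimate is $|\Mod(s)|\lesssim s^{-(K+2)}$, one power better than your generic Gram-matrix bound $|\Mod|\lesssim\norm{\eps}_{H^1_{\rm loc}}+\norm{\Psi_K}$, because the orthogonality conditions \eqref{eq:ortho} are chosen to kill the linear contributions; this extra gain is what places several remainder terms at the required $O(s^{-2(K+1)})$ level in the Lyapunov computation.
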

Let us insist again that the key point in Proposition \ref{prop:uniform-estimates-rescaled-time} is that $s_0$ and the constants in the estimates are independent of $s_1\to +\infty$.

\subsection{Proof of Theorem \ref{thm:1} assuming Proposition \ref{prop:uniform-estimates-rescaled-time}}

 First, we convert the estimates of Proposition \ref{prop:uniform-estimates-rescaled-time} in the  original time variable $t$. We claim:
 \begin{lemma}[Estimates in the $t$ variable]\label{le:est-t}
There exists $t_0<0$ such that under the assumptions of  Proposition \ref{prop:uniform-estimates-rescaled-time}, for all $t\in [t_0,t_1]$, 
\bea
&&b(t)= C_b |t|^{\frac\alpha{4-\alpha}}(1+o_{t\uparrow 0}(1))
\label{eq:smallness-1}, \ \ \lambda (t)=C_\lambda|t|^{\frac{2}{4-\alpha}}(1+o_{t\uparrow 0}(1))\\
&&\norm{\eps(t)}_{H^1}\lesssim |t|^{\frac{(K+1)\alpha}{4-\alpha}}\label{eq:smallness-1-e}\\
&& \label{newener}
|E(P_{b,\lambda,\gamma}(t))-E_0|=o_{t\uparrow 0}(1)
\eea
\end{lemma}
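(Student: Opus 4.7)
The plan is to invert the relation $s = s_1 - \int_{t}^{t_1} \lambda^{-2}(\tau)\,d\tau$ using the estimate $\lambda \approx \lambda_{\rm app}$ from Proposition~\ref{prop:uniform-estimates-rescaled-time}, thereby converting each $s$-estimate into the corresponding $t$-estimate. First I will rewrite \eqref{eq:smallness-1-s} as $\lambda(s) = \lambda_{\rm app}(s)(1+r(s))$ with $|r(s)|\lesssim s^{-1/2}+s^{2-4/\alpha}$, then integrate
\begin{equation*}
t_1 - t = \int_s^{s_1} \lambda^2(\sigma)\,d\sigma = \int_s^{s_1} \lambda_{\rm app}^2(\sigma)\,d\sigma + \int_s^{s_1} \lambda_{\rm app}^2(\sigma)(2r+r^2)(\sigma)\,d\sigma.
\end{equation*}
The first integral equals $|t_{\rm app}(s)|-|t_{\rm app}(s_1)|$ by the computation in Remark~\ref{rk:9}, while the error integral is dominated by $|t_{\rm app}(s)|(s^{-1/2}+s^{2-4/\alpha})$ since both $\sigma^{-4/\alpha}\sigma^{-1/2}$ and $\sigma^{-4/\alpha}\sigma^{2-4/\alpha}$ are integrable at $+\infty$ (using $4/\alpha>1$). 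By the choice $t_1 = t_{\rm app}(s_1)$ built into the definition of $s_1$, this yields $t = t_{\rm app}(s)(1+o_{s\to\infty}(1))$.

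Inverting this relation gives $s(t) = C_s^{\alpha/(4-\alpha)}|t|^{-\alpha/(4-\alpha)}(1+o_{t\uparrow 0}(1))$. Plugging this into the identities $\lambda_{\rm app}(s) = C_\lambda|t_{\rm app}(s)|^{2/(4-\alpha)}$ and $b_{\rm app}(s) = C_b|t_{\rm app}(s)|^{\alpha/(4-\alpha)}$ from Remark~\ref{rk:9}, combined with \eqref{eq:smallness-1-s}, directly produces \eqref{eq:smallness-1}. The $H^1$ estimate \eqref{eq:smallness-1-e} follows from $\norm{\eps(t)}_{H^1}=\norm{\eps(s)}_{H^1}\lesssim s^{-(K+1)}\lesssim |t|^{(K+1)\alpha/(4-\alpha)}$, and the energy estimate \eqref{newener} from $|E(P_{b,\lambda,\gamma}(t))-E_0|\lesssim s^{-6}=o_{t\uparrow 0}(1)$.

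It remains to select a universal $t_0<0$ independent of $t_1$. Writing $t(s_0)$ for the $t$-value such that $s(t(s_0))=s_0$, the inversion above shows $t(s_0)\to -C_s s_0^{-(4-\alpha)/\alpha}$ as $t_1\uparrow 0^-$. Hence taking $t_0 := -\tfrac 12 C_s s_0^{-(4-\alpha)/\alpha}$ ensures $[t_0, t_1]\subset[t(s_0), t_1]$ for every $t_1$ sufficiently close to $0$, and on this interval all the rescaled estimates of Proposition~\ref{prop:uniform-estimates-rescaled-time} are available.

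The main obstacle is the careful propagation of the errors $s^{-1/2}+s^{2-4/\alpha}$ through the integration and the inversion, verifying that none of them degrade the claimed rates after pulling back to the $t$-variable. Once one exploits $\alpha\in(0,2)$ to secure the integrability at $+\infty$, the argument reduces to an explicit perturbation of the exact formulas in Remark~\ref{rk:9}, and the lemma follows.
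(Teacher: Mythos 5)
Your proposal is correct and follows essentially the same route as the paper: integrate $dt=\lambda^2\,ds$, compare with $t_{\rm app}(s)=-\int_s^{+\infty}\lambda^2_{\rm app}$ using the normalization $t_{\rm app}(s_1)=t_1$, and pull back the estimates of Proposition \ref{prop:uniform-estimates-rescaled-time} via $t(s)=t_{\rm app}(s)(1+o(1))$; your tail estimates for the error integral and your choice of $t_0$ are the details the paper leaves implicit. The only slight imprecision is the phrase ``$t(s_0)\to -C_s s_0^{-\frac{4-\alpha}{\alpha}}$ as $t_1\uparrow 0^-$''; what you actually have (and all you need) is the bound $t(s_0)=-C_s s_0^{-\frac{4-\alpha}{\alpha}}\bigl(1+O(s_0^{-\frac 12}+s_0^{2-\frac 4\alpha})\bigr)$ uniformly in $t_1$, which for $s_0$ large justifies the fixed choice of $t_0$.
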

\begin{proof}[Proof of Lemma \ref{le:est-t}] Using \eqref{eq:smallness-1-s}, \eqref{defst}, for all large $s<s_1$,
$$
t_1-t(s)=\int_s^{s_1}\l^2(\sigma)d\sigma=\int_s^{s_1}\l^2_{\rm app}(\sigma)\left[1+O(\sigma^{-\frac 12})+O(\sigma^{2-\frac 4\alpha})\right]d\sigma.$$
Recall that $t_{\rm app}$ given by \eqref{deftapp} corresponds to the normalization 
$$t_{\rm app}(s)=-\int_{s}^{+\infty}\l^2_{\rm app}(\sigma),\quad t_{\rm app}(s_1)=t_1,$$
from which we obtain
$$t(s)=t_{\rm app}(s)(1+o(1))=-C_s s^{-\frac{4-\alpha}{\alpha}}\left[1+o(1)\right].$$ The estimates of Lemma \ref{le:est-t} now follow directly follow from \eqref{eq:dbu} and Proposition \ref{prop:uniform-estimates-rescaled-time}  (see the definition of $C_\lambda$ and $C_b$ in \eqref{Cbumille} and \eqref{Cbu2}).
\end{proof}

Now, we finish the proof of Theorem \ref{thm:1} assuming Proposition \ref{prop:uniform-estimates-rescaled-time}.

\begin{proof}[Proof of Theorem \ref{thm:1}]
Let $(t_n)\subset(t_0,0)$ be an increasing sequence  such that $\lim_{n\to \infty}t_n=0$. For each $n$, let $u_n$  be the solution of \eqref{eq:nls} on $[t_0,t_n]$ with final data at $t_n$ 
\begin{equation}\label{eq:final-data-n}
u_n(t_n,x)=\frac{1}{\lambda^{\frac{d}{2}}(t_n)} P_{b(t_n)}\left(\frac{x}{\l(t_n)}\right),
\end{equation}
where $\lambda(t_n)=\lambda_1$ and  $b(t_n)=b_1$ are given by  Lemma \ref{le:bu}
for $s_1=|C_s^{-1}t_n|^{-\frac{\alpha}{4-\alpha}}$, 
so that $u_n(t)$ satisfies the conclusions of Proposition \ref{prop:uniform-estimates-rescaled-time} and of Lemma \ref{le:est-t} on the interval $[t_0,t_n]$.
The minimal mass blow up solution for \eqref{eq:nls} is now obtained as the limit of a subsequence of $(u_n)$. In a first step, we prove that a subsequence  of $(u_n(t_0))$ converges to a suitable initial data. Indeed, from Lemma \ref{le:est-t}, we infer that $(u_n(t_0))$ is bounded in $H^1(\R^d)$.  Hence there exists a subsequence of $(u_n(t_0))$ (still denoted by $(u_n(t_0))$ and 
 $u_\infty(t_0)\in H^1(\R^d)$ such that 
\[
u_n(t_0)\rightharpoonup  u_\infty(t_0)\quad\text{weakly in }H^1(\R^d)\text{ as }n\to+\infty.
\]
Now, we obtain   strong  convergence in $H^s$ (for some $0<s<1$) by direct arguments.  Let $\chi:[0,+\infty)\to[0,1]$ be a smooth cut-off function such that $\chi\equiv 0$ on  $[0,1]$ and $\chi\equiv 1$ on $[2,+\infty)$. 
For $R>0$, define $\chi_R:\R^d\to[0,1]$ by $\chi_R(x)=\chi(|x|/R)$. 
Take any $\delta>0$. By the expression of $u_n(t_n)$ in \eqref{eq:final-data-n}, we can choose $R$ large enough (independent of $n$) so that  
\begin{equation}\label{n-delta}
\int_{\R^d} |u_n(t_n)|^2\chi_R dx\leq \delta.
\end{equation}
It follows from elementary computations that  
\[
\frac{d}{d t}\int_{\R^d} |u_n|^2\chi_R dx=2\, \Im\int_{\R^d}\nabla\chi_R\cdot \nabla u_n \, \bar u_n dx.
 \]
Hence from the geometrical decomposition 
\begin{equation*}
u_n(t,x)=\frac{1}{\lambda_n^{\frac{d}{2}}(t)}\left(P_{b_n(t)}+\eps_n)(t,y)\right)e^{i\gamma_n(t)},\qquad 
y=\frac{x}{\lambda_n(t)},
\end{equation*}
 and the smallness \eqref{eq:smallness-1}-\eqref{eq:smallness-1-e} of $\eps_n$  and $\lambda_n$ we infer
\[
\left|\frac{d}{d t}\int_{\R^d} |u_n(t)|^2\chi_R dx\right|\leq  \frac{C}{\lambda_n(t) R}\left(e^{-\frac{R}{2\lambda_n(t)}}+\norm{\eps_n(t)}^2_{H^1}\right)\leq \frac{C}{R}|t|^{\left(-\frac{2}{\alpha}+K+1\right)\frac{\alpha}{4-\alpha}}.
 \]
Integrating between $t_0$ and $t_n$, we obtain 
\[
\int_{\R^d} |u_n(t_0)|^2\chi_R dx\leq \frac{C}{R}|t_0|^{\left(-\frac{2}{\alpha}+K+1\right)\frac{\alpha}{4-\alpha}+1}+\int_{\R^d}|u_n(t_n)|^2\chi_R dx.
\]
Combined with \eqref{n-delta}, for a possibly larger $R$, this implies
\[
\int_{\R^d} |u_n(t_0)|^2\chi_R dx\leq 2\delta.
\]
We conclude from the local compactness of Sobolev embeddings that for $0\leq s<1$:
\[
u_n(t_0)\to  u_\infty(t_0)\quad\text{strongly in }H^s(\R^d),\ \text{ as }n\to+\infty.
\]
Let $u_\infty(t)$ be the solution of \eqref{eq:nls} with $u_\infty(t_0)$ as initial data at $t=t_0$. From \cite{Ca03,CaWe90} there exists $0<s_0<1$ such that the Cauchy problem for \eqref{eq:nls} is locally well-posed  in $H^{s_0}(\R^d)$. This implies that $u_\infty$ exists on $[t_0,0)$ and for any $t\in[t_0,0)$,  
\[
u_n(t)\to  u_\infty(t)\quad\text{strongly in }H^{s_0}(\R^d),\,\text{ weakly in }H^1(\R^d),\,\text{ as }n\to+\infty.
\]
Moreover,
since $\lim_{n\to \infty} \int u_n^2(t_n) = \int Q^2$, we have $\int u_\infty^2=\int Q^2$. By weak convergence in $H^1(\R^d)$ and the estimates from Lemma \ref{le:est-t} applied to $u_n$, $u_\infty(t)$ satisfies \eqref{close}, and denoting $(\eps_\infty,\lambda_\infty,b_\infty,\gamma_\infty)$ its decomposition, we have by standard arguments (see e.g. \cite{MeRa05}), for any $t\in[t_0,0)$,
$$
\lambda_n(t)\to \lambda_\infty(t), \quad
b_n(t)\to b_\infty(t), \quad
\gamma_n(t)\to \gamma_\infty(t),
\quad \eps_n(t)\rightharpoonup \eps_\infty(t) \quad \hbox{$H^1(\R^d)$ weak, as $n\to \infty$.}
$$ 
The uniform estimates on $u_n$ from Lemma \ref{le:est-t} give, on $[t_0,0)$,
\be
\label{eq:smallness-inf-e}
b_\infty(t)=C_b|t|^{\frac \alpha{4-\alpha}} \left(1+o_{t\uparrow 0}(1)\right) ,\quad \lambda_\infty (t)=C_\lambda |t|^{\frac {2 }{4-\alpha}} \left(1+o_{t\uparrow 0}(1)\right) , \ \ \|\varepsilon_{\infty}(t)\|_{H^1}\lesssim |t|^{\frac{(K+1)\alpha}{4-\alpha}},
\ee
\begin{equation}\label{eq:blambda2}
\frac{b_\infty(t)}{\lambda_\infty^2(t)}=\frac{C_b}{C_\lambda^2}|t|^{\frac \alpha{4-\alpha} - \frac{4}{4-\alpha}}
\left(1+o_{t\uparrow 0}(1)\right)
=\frac{2}{4-\alpha}\frac{1}{|t|}\left(1+o_{t\uparrow 0}(1)\right)=\frac{\sigma}{|t|}\left(1+o_{t\uparrow 0}(1)\right),
\end{equation}
which justifies the form \eqref{eq:form} and the blow up rate \eqref{blowupspeed}.
Finally, we prove that $E(u_\infty)=E_0$.
Let $t_0<t<0$. We have by \eqref{newener} and \eqref{eener},
$$
\mathcal{E}(b_n(t),\lambda_n(t))-\frac{8E_0}{\int |y|^2 Q^2}=o_{t\uparrow 0}(1)
$$
where the $o_{t\uparrow 0}(1)$ is independent of $n$,
and thus
$$
\mathcal{E}(b_\infty(t),\lambda_\infty(t))-\frac{8E_0}{\int |y|^2 Q^2}=o_{t\uparrow 0}(1)
$$
Using \eqref{eener}, we deduce
$$
E(P_{b_\infty,\lambda_\infty,\gamma_\infty}(t))-E_0=o_{t\uparrow 0}(1)$$
and thus, by \eqref{eq:smallness-inf-e},
$$
E(u_\infty(t))-E_0=o_{t\uparrow 0}(1).
$$
Thus, by conservation of energy and passing to the limit $t\uparrow0$, we obtain $E(u_\infty(t))=E_0$.
\end{proof}

\subsection{Bootstrap estimates}\label{sec:4:3}
The rest of the paper is devoted to the proof of Proposition \ref{prop:uniform-estimates-rescaled-time}.
We use  a bootstrap argument involving  the following estimates:
\begin{equation}
\label{eq:smallness-4-s}\norm{\eps(s)}_{H^1} < s^{-K},\quad
\left|\frac{\lambda^{\frac\alpha2}(s)}{\lambda_{\rm app}^{\frac\alpha2}(s)}-1\right|
+\left|\frac{b(s)}{b_{\rm app}(s)}-1\right|  <s^{-\delta(\alpha)}
\end{equation}
for some small enough universal constant $\delta(\alpha)>0$.
The following value is suitable in this paper
\begin{equation}\label{deltaalpha}
	\delta(\alpha)= \min\left(\frac 14,\frac 2\alpha-1\right)>0. 
\end{equation}
For $s_0>0$  to be chosen large enough (independently of $s_1$),   we 
define
\begin{equation}\label{def:st}
s_*=\inf\{ \tau\in[s_0,s_1];\eqref{eq:smallness-4-s} \text{ holds on }  [\tau,s_1]\}.
\end{equation}
Observe from \eqref{app_ini} that 
$$\left|\frac{\lambda_1^{\frac\alpha2}}{\lambda_{\rm app}^{\frac\alpha2}(s_1)}-1\right|+\left|\frac {b_1}{b_{\rm app}(s_1)}-1\right|\lesssim s_1^{-\frac 12}+s_1^{2-\frac{4}{\alpha}}\ll s_1^{-\delta(\alpha)},$$  for $s_1$ large, and hence by the definition \eqref{eq:final-data} of  $u(s_1)$, $s_*$ is well-defined and $s_*<s_1$. 
In \S5, \S6 and \S7,  we prove that \eqref{eq:smallness-1-s} holds on $[s_*,s_1]$.
By a standard continuity argument, provided that $s_0$ is large enough, we obtain $s_*=s_0$ which implies Proposition \ref{prop:uniform-estimates-rescaled-time}. The main lines  of the proof are as follows: first,  we derive modulation equations  from the construction of $P_b$,
second we control the remaining error using a mixed Energy/Morawetz functional first derived in \cite{RaSz11}.

\section{Modulation equations}

In this section, we work with the solution $u(t)$ of Proposition \ref{prop:uniform-estimates-rescaled-time} on the time interval $[s_*,s_1]$ (see \eqref{eq:smallness-4-s}-\eqref{def:st}). We justify that the  dynamical system satisfied by the modulation parameters $\lambda,b$ is at the main order given by \eqref{eq:sys}. 
Define \[
\Mod(s) =\begin{pmatrix}
b+\frac{\lambda_s}{\lambda}\\
b_s+b^2-\theta\\
1-\gamma_s
\end{pmatrix}.
\]

\begin{lemma}[Modulation equations and additional orthogonality]\label{lem:estimate-modulation-equations}
For all $s\in [s_*,s_1]$,
\begin{equation}\label{eq:estimate-modulation}
|\Mod(s)|\lesssim \frac 1{s^{K+2}},
\end{equation} 
\begin{equation}\label{eq:additional-ortho}
|\psld{\eps(s)}{Q}|\lesssim  \frac 1{s^{K+1}}.
\end{equation}
\end{lemma}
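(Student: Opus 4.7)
The strategy is the standard modulation theory near the ground state. First I would derive a PDE for the correction $\eps$ by plugging the decomposition \eqref{eq:modulation} into \eqref{eq:nls}, switching to $(s,y)$ variables, and subtracting the $P_b$ equation \eqref{eq:P_b}. The result is a Schr\"odinger-type equation whose right-hand side has the schematic form
$$
i\,\Mod_1\,\Lambda P_b - \Mod_2\,\tfrac{|y|^2}{4}P_b + i\,\Mod_3(P_b+\eps) - \Psi_K e^{-ib|y|^2/4}
$$
modulo the semilinear terms $[f(P_b+\eps)-f(P_b)] + \lambda^\alpha[g(P_b+\eps)-g(P_b)]$, where $\Mod_1,\Mod_2,\Mod_3$ denote the three entries of $\Mod(s)$. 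Pairing this equation with each of the three directions $\Phi \in \{i\Lambda P_b,\,|y|^2P_b,\,i\rho_b\}$ and using $\tfrac{d}{ds}\psld{\eps}{\Phi}=0$ produces a $3\times 3$ linear system $A(b,\lambda)\cdot\Mod(s)=B$.

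The choice of orthogonalities is precisely engineered so that at $(b,\lambda)=(0,0)$ the matrix $A$ is diagonal-dominant with nonzero diagonal entries (proportional to $\|\Lambda Q\|_2^2$, $\tfrac14\|yQ\|_2^2$, and $\psld{\rho}{Q}$), while the identities \eqref{algebra} together with the parity in $b$ of the real/imaginary parts of $P_b$ force all linear-in-$\eps$ contributions to $B$ to cancel. Hence $A$ is invertible with $O(1)$ inverse for $b,\lambda$ small, and $B$ reduces to quadratic-in-$\eps$ terms plus projections of $\Psi_K$, which by \eqref{eq:error-term-estimate} are bounded by $\lambda^\alpha |\Mod(s)| + (b^2+\lambda^\alpha)^{K+2}$. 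Absorbing the $\lambda^\alpha |\Mod|$ term into the left (using $\lambda^\alpha\ll 1$) and applying the bootstrap \eqref{eq:smallness-4-s}, which gives $\|\eps\|_{H^1}^2\lesssim s^{-2K}$ and $(b^2+\lambda^\alpha)^{K+2}\lesssim s^{-2(K+2)}$, one concludes
$$
|\Mod(s)| \lesssim \|\eps\|_{H^1}^2 + (b^2+\lambda^\alpha)^{K+2} \lesssim s^{-(K+2)}
$$
since $K\gg 1$, which is \eqref{eq:estimate-modulation}.

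The bound \eqref{eq:additional-ortho} then follows from $L^2$ conservation. Since $u(s_1)=\lambda_1^{-d/2}P_{b_1}(\cdot/\lambda_1)$, the mass is constantly equal to $\|P_{b_1}\|_2^2$, and expanding the decomposition yields
$$
2\,\psld{\eps}{P_b} = \|P_{b_1}\|_2^2 - \|P_{b,\lambda,\gamma}(s)\|_2^2 - \|\eps\|_2^2.
$$
Integrating \eqref{dmass} over $[s,s_1]$ with the modulation estimate just established gives $|\|P_{b,\lambda,\gamma}(s)\|_2^2-\|P_{b_1}\|_2^2| \lesssim s^{-(K+3)}$, while $\|\eps\|_2^2\lesssim s^{-2K}$. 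Since $P_b = Q + O_{L^2}(\lambda^\alpha)$ by \eqref{eq:def-P}--\eqref{eq:def-P_b}, we obtain $\psld{\eps}{Q} = \psld{\eps}{P_b} + O(\lambda^\alpha\|\eps\|_2) = O(s^{-(K+2)}) \lesssim s^{-(K+1)}$.

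The main obstacle I expect is the linear-algebra step: verifying by careful parity bookkeeping (the $P_{j,k}^+$ contribute to the real part with $b^{2j}$, while the $P_{j,k}^-$ contribute to the imaginary part with $b^{2j+1}$) together with the algebra \eqref{algebra} that $A(b,\lambda)$ is diagonal-dominant with $O(1)$ inverse and that every linear-in-$\eps$ contribution to the source $B$ cancels. Once this is in place the remaining estimates are routine though tedious.
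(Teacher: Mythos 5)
Your overall architecture — differentiating the orthogonality relations \eqref{eq:ortho}, inverting an almost-diagonal $3\times3$ system in $\Mod(s)$, invoking \eqref{eq:error-term-estimate} and the bootstrap \eqref{eq:smallness-4-s}, and then getting \eqref{eq:additional-ortho} from mass conservation together with \eqref{dmass} — is indeed the paper's approach. But there is a genuine gap at your central linear-algebra claim that ``every linear-in-$\eps$ contribution to the source $B$ cancels.'' It does not. Pairing the $\eps$-equation with the direction $\Lambda P_b$ and using the algebra \eqref{algebra} (in particular $L_+\Lambda Q=-2Q$ and $L_-|y|^2Q=-4\Lambda Q$), the linear part reduces to $-2\psld{\eps}{P_b}+2b\psld{\eps}{i\Lambda P_b}+O(s^{-2}\norm{\eps}_2)$. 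The second term vanishes by \eqref{eq:ortho}, but $\psld{\eps}{P_b}$ is \emph{not} one of the imposed orthogonality conditions — there is no orthogonality of $\eps$ to $Q$ or $P_b$, which is precisely why \eqref{eq:additional-ortho} has to be proved at all. A priori this surviving term is only $O(\norm{\eps}_2)=O(s^{-K})$, so your system yields only $|\Mod(s)|\lesssim s^{-K}$, two powers short of \eqref{eq:estimate-modulation}; those two powers are genuinely needed later (in \eqref{dmass}, in Lemma \ref{lem:derivative-H}, and in Lemma \ref{lem:bootstrap}).

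Nor can the fix be the sequential one you propose (first \eqref{eq:estimate-modulation}, then $\psld{\eps}{P_b}$ by mass conservation): the mass argument rests on integrating \eqref{dmass}, whose right-hand side involves $|\Mod(s)|$, while the bound on $|\Mod(s)|$ requires control of $\psld{\eps}{P_b}$ — the two estimates are circularly linked. The paper resolves this by proving them simultaneously: since $\eps(s_1)=0$, one introduces the auxiliary time $s_{**}=\inf\{s\in[s_*,s_1]:\ |\psld{\eps(\tau)}{P_b}|<\tau^{-(K+2)}\ \mbox{on}\ [s,s_1]\}$, derives \eqref{onmod} on $[s_{**},s_1]$ using this assumption to absorb the $-2\psld{\eps}{P_b}$ term, and then feeds \eqref{onmod} into \eqref{dmass} and the conservation of $\norm{u}_2$ to obtain the strictly stronger bound $|\psld{\eps}{P_b}|\lesssim s^{-(K+3)}$, which forces $s_{**}=s_*$ by continuity; \eqref{eq:additional-ortho} then follows from $|P_b-Q|\lesssim Q^{1/2}s^{-1}$. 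Your write-up is missing both the surviving linear term and this continuity argument; once these are added, the remaining steps of your proposal do match the paper's proof.
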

\begin{proof}[Proof of Lemma \ref{lem:estimate-modulation-equations}]
The proofs of the two estimates are combined.
Since $\eps(s_1)\equiv0$, we may define
$$
s_{**}=\inf\{s\in [s_*,s_1];\   |\psld{\eps(\tau)}{P_b}|<\tau^{-(K+2)}\text{ holds on }  [s,s_1]\}.
$$ 
We work on the interval $[s_{**},s_1]$.

Since $P_b$ verifies equation \eqref{eq:P_b}, we obtain the following equation for $\eps$:
\begin{multline}\label{eq:eps}
i\eps_s+\Delta \eps-\eps+ib\Lambda\eps+(f(P_b+\eps)-f(P_b))+\lambda^\alpha (g(P_b+\eps)-g(P_b))\\
-i\left(b+\frac{\lambda_s}{\lambda}\right)\Lambda (P_b+\eps)
+(1-\gamma_s)(P_b+\eps)
+(b_s+b^2-\theta)\frac{|y|^2}{4}P_b\\
=-\Psi e^{-i\frac{b|y|^2}{4}}.
\end{multline}
where $\Psi:=\Psi_K$. Recall  that equation \eqref{eq:eps} combined with the orthogonality conditions chosen on $\eps$ -- see \eqref{eq:ortho} -- contains the equations of the modulation parameters. Technically, one differentiates in time the orthogonality conditions for $\eps$, then uses the equation \eqref{eq:eps} on $\eps$ and the estimate \eqref{eq:error-term-estimate} on the error term $\Psi$.
Here, as in \cite{RaSz11}, the orthogonality conditions are chosen to obtain quadratic control in $\eps$. Since it is a   standard argument  (see e.g. \cite{MeRa06,PlRa07,RaSz11}), we only sketch   relevant computations. 

Consider for example the orthogonality condition $\psld{\eps}{i\Lambda P_b}=0$. Differentiating in $s$, we obtain 
$
\dual{\eps_s}{i\Lambda P_b}+ \dual{\eps}{i \partial_s(\Lambda P_b)}=0.
$
Since
$$
\frac d{d s} ( \Lambda P_b) 
  = \left( (\Lambda P)_s   - i \frac {b_s}{4} |y|^2 \Lambda P \right) e^{-i \frac b4 |y|^2},
$$
and
\begin{multline*}
(\Lambda P)_s  = \lambda^\alpha 
\Bigg( 
\alpha \frac{\lambda_s}{\lambda} \bigg(Z+\sum_{(j,k)\in\Sigma_K}kb^{2j}\lambda^{k\alpha-1}(P_{j,k}^++bP_{j,k}^-)\bigg)\\
+b_s\bigg(\sum_{(j,k)\in\Sigma_K}2jb^{2j-1}\lambda^{k\alpha}P_{j,k}^+  +\sum_{(j,k)\in\Sigma_K}(2j+1)b^{2j}\lambda^{k\alpha}P_{j,k}^-\bigg)
 \Bigg),
\end{multline*}
proceeding as in the proof of Proposition \ref{prop:profile}, and using the properties of the functions $P_{j,k}^\pm$, we  note that 
\begin{equation*}
\sup_{y\in \R}\left(  e^{\frac y2} \left| \frac d{d s} ( \Lambda P_b) (y)\right| \right)\lesssim |\Mod(s)|+b^2(s) + \lambda^\alpha(s).
\end{equation*}
Thus,  by \eqref{eq:smallness-4-s},
\[
  |\psld{\eps}{i \partial_s(\Lambda P_b)}| \lesssim \|\eps(s)\|_{2}\left(|\Mod(s)|+b^2(s) + \lambda^\alpha(s)\right)
  \nonumber \\ 
  \lesssim s^{-2}  |\Mod(s)|+ s^{-(K+2)}.
\]
Next, we write $\dual{\eps_s}{i\Lambda P_b}=-\dual{i\eps_s}{\Lambda P_b}$ and we use the equation of $\eps$.
We start by the contribution of the first line of \eqref{eq:eps}. 
Remark that by \eqref{eq:smallness-4-s},
\begin{align*}
f(P_b+\eps)-f(P_b)&=e^{-ib\frac{|y|^2}{4}}\left( f\left(P+e^{ib\frac{|y|^2}{4}}\eps\right)-f(P)\right)=e^{-ib\frac{|y|^2}{4}}df(P)\left(e^{ib\frac{|y|^2}{4}}\eps\right)+O(|\eps|^2)\\
&=e^{-ib\frac{|y|^2}{4}}df(P)\left(e^{ib\frac{|y|^2}{4}}\eps\right)+O(s^{-2}|\eps| ),
\end{align*}
\[
\lambda^{\alpha} \left( g (P_b+\eps)-g(P_b)\right) =  O(\lambda^\alpha |\eps|)
= O(s^{-2}|\eps|),
\]
and  
\[
\Delta\eps+ib\Lambda\eps=e^{-ib\frac{|y|^2}{4}}\Delta \left(e^{ib\frac{|y|^2}{4}} \eps\right)+b^2\frac{|y|^2}{4}\eps, \qquad
\Lambda P_b=e^{-ib\frac{|y|^2}{4}}\left(\Lambda P-ib\frac{|y|^2}{2}P\right).
\]
Therefore,   using  \eqref{eq:smallness-4-s} and $P=Q+O_{H^1}(s^{-2})$ (see  the definition of $P$ in \eqref{eq:def-P}), we have
\begin{multline*}
\dual{-\Delta\eps+\eps-ib\Lambda\eps-(f(P_b+\eps)-f(P_b))+\lambda^{\alpha} \left( g (P_b+\eps)-g(P_b)\right)}{\Lambda P_b}\\
=\dual{-\Delta \left(e^{ib\frac{|y|^2}{4}} \eps\right)+e^{ib\frac{|y|^2}{4}}\eps-pQ^{p-1}\left(e^{ib\frac{|y|^2}{4}}\eps\right)}{\Lambda Q-ib\frac{|y|^2}{2}Q}+O(s^{-2}\|\eps\|_{2})\\
=\dual{L_+ \left(e^{ib\frac{|y|^2}{4}} \eps\right)}{\Lambda Q}-\frac b2\dual{L_-\left(e^{ib\frac{|y|^2}{4}} \eps\right)}{i|y|^2Q}+O(s^{-2}\|\eps\|_{2})\\
=\dual{e^{ib\frac{|y|^2}{4}} \eps}{L_+(\Lambda Q)}-\frac b2\dual{e^{ib\frac{|y|^2}{4}} \eps}{iL_-(|y|^2Q)}+O(s^{-2}\|\eps\|_{2})\\
=-2\psld{\eps}{ e^{-ib\frac{|y|^2}{4}}Q}+2b\psld{\eps}{ie^{-ib\frac{|y|^2}{4}}\Lambda Q}+O(s^{-2}\|\eps\|_{2})\\
=-2\psld{\eps}{ P_b}+2b\psld{\eps}{i \Lambda P_b}+O(s^{-2}\|\eps\|_{2})
=O(s^{-(K+2)}).
\end{multline*}
Note that we have used  algebraic relations  from \eqref{algebra}, then \eqref{eq:smallness-4-s},  $\psld{\eps}{i\Lambda P_b}=0$ and the definition of $s_{**}$.

The part  corresponding to the second line of \eqref{eq:eps} gives 
\begin{multline*}
\psld{-i\left(b+\frac{\lambda_s}{\lambda}\right)\Lambda(P_b+\eps)+(1-\gamma_s)(P_b+\eps)+(b_s+b^2-\theta)\frac{|y|^2}{4}P_b}{\Lambda P_b}\\
=-(b_s+b^2-\theta)\norm{yP_b}_2^2+O(|\Mod(s)|\norm{\eps}_2)\\
=-(b_s+b^2-\theta)(\norm{yQ}_2^2+O\left(s^{-2})\right)+O(s^{-2}|\Mod(s)|).
\end{multline*}

Finally, from the estimate \eqref{eq:error-term-estimate} on $\Psi$, we have
$$
\left| \psld{\Psi}{\Lambda P-ib\frac{|y|^2}{2}P}\right|\lesssim s^{-2} |\Mod(s)| + s^{-2(K+2)}.
$$ 
Combining the previous estimates, we find
\begin{equation*}
|b_s+b^2-\theta| \lesssim s^{-2} |\Mod(s)| + s^{-(K+2)}.
\end{equation*}

Using the other orthogonality conditions in \eqref{eq:ortho} in a similar way, together with \eqref{algebra}, we find
\begin{equation*}
 |\Mod(s)|
\lesssim  s^{-2} |\Mod(s)|  + s^{-(K+2)}. 
\end{equation*}
We deduce that for all $s\in [s_{**},s_1]$,
\begin{equation}\label{onmod}
|\Mod(s)|\lesssim s^{-(K+2)}.
\end{equation}

By conservation of the $L^2$ norm and \eqref{eq:final-data}, we have
\[
\norm{u(s)}_2^2=\norm{u(s_1)}_2^2=\norm{P_b(s_1)}_2^2.
\]
Thus, by \eqref{eq:modulation},  
\[
\psld{\eps(s)}{P_b}=\frac12\left(\norm{u(s)}_2^2-\norm{P_b(s)}_2^2-\norm{\eps(s)}_2^2\right)=
-\frac 12 \norm{\eps(s)}_2^2+\frac 12 \left( \norm{P_b(s_1)}_2^2-\norm{P_b(s)}_2^2\right).
\]
Moreover, by  \eqref{dmass}, \eqref{eq:smallness-4-s} and \eqref{onmod},
\[
\frac {d}{ds}\int |P_b|^2 
\lesssim s^{-(K+4)}. 
\]
Integrating and combining the previous estimates with \eqref{eq:smallness-4-s},  we obtain, for all $s\in [s_{**},s_1]$,  
\begin{equation}\label{eq:new-ortho-1}
|\psld{\eps(s)}{P_b}|\lesssim s^{-(K+3)}.
\end{equation}
Therefore, $s_{**}=s_*$ and the estimates \eqref{onmod} and \eqref{eq:new-ortho-1} are proved on 
$[s_*,s_1]$. Since $|P_b - Q|\lesssim  Q^{\frac 12} s^{-1}$, we obtain \eqref{eq:additional-ortho}.
\end{proof}

\section{The mixed energy Morawetz monotonicity formula}

In this section, following \cite{RaSz11}, we introduce a mixed Energy/Morawetz functional to control the remaining part of the solution in $H^1(\R^d)$. First, define the   energy of $\eps$
\begin{multline*}
H(s,\eps):=\frac12\norm{\nabla\eps}_2^2+\frac12\norm{\eps}_2^2-\int_{\R^d} (F(P_b+\eps)-F(P_b)-dF(P_b)\eps)dy\\
-\lambda^\alpha \int_{\R^d} (G(P_b+\eps)-G(P_b)-dG(P_b)\eps)dy.
\end{multline*}
Note that as in \cite{RaSz11}, the time derivative of the linearized energy $H$ for $\eps$ cannot be controlled alone, and one has to add a virial type functional  such as 
$
\frac b2\Im\int_{\R^d} \nabla\left(\frac{|y|^2}{2}\right) \nabla \eps\bar\eps dy.
$
In practice, due to the lack of control on $\norm{y\eps}_2$, we replace $\frac 12 |y|^2$ by a   function whose gradient is bounded, which we  introduce now.\\
Let $\phi:\R\to \R$ be a smooth even and convex function, nondecreasing on $\R^+$,
such that   
\[
\phi(r)=
\left\{
\begin{aligned}
&\frac12r^2& \text{ for }&r<1,\\
&3r+e^{-r}& \text{ for }&r>2,
\end{aligned}
\right.
\]
and set $\phi(x)=\phi(|x|)$.
Let $A\gg1$ to be fixed. Define $\phi_A$ by $\phi_A(y)=A^2\phi\left(\frac{y}{A}\right)$ and 
\[
J(\eps)=\frac12\Im\int_{\R^d}\nabla\phi_A\cdot \nabla \eps\bar\eps dy.
\]
Finally, set
\[ 
S(s,\eps)=\frac{1}{\lambda^{4}(s)}(H(s,\eps)+b(s)J(\eps(s))).
\] 

The relevance of the functional $S$ lies on  the following two   properties. 

\begin{proposition}[Coercivity of $S$]\label{prop:coercivity-of-S}
For any $s\in[s_*,s_1]$,
\[
S(s,\eps(s))\gtrsim \frac{1}{\lambda^{4}(s)}\left(
\norm{\eps(s)}_{H^1}^2+O(s^{-2(K+1)}\right).
\]
\end{proposition}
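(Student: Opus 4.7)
The plan is to Taylor-expand $H(s,\eps)$ around $\eps=0$, isolate its quadratic form, apply the coercivity estimate \eqref{eq:coercivity}, and treat the virial piece $bJ(\eps)$ as a controlled perturbation. By the bootstrap \eqref{eq:smallness-4-s}, $\|\eps\|_{H^1}\lesssim s^{-K}$, so the cubic and higher Taylor remainders of $F(P_b+\eps)-F(P_b)-dF(P_b)\eps$ and of $G(P_b+\eps)-G(P_b)-dG(P_b)\eps$ are of order $\|\eps\|_{H^1}^{2+\delta}$ and negligible compared to the quadratic piece.

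I would then perform the phase conjugation $\tilde\eps:=e^{ib|y|^2/4}\eps$. Since $P_b=Pe^{-ib|y|^2/4}$ and the modulus is phase-invariant, the nonlinear pieces of $H$ satisfy
\[
\int \big(F(P_b+\eps)-F(P_b)-dF(P_b)\eps\big) = \int \big(F(P+\tilde\eps)-F(P)-dF(P)\tilde\eps\big),
\]
and similarly for $G$. Using $P=Q+\lambda^\alpha Z$, replacing $P$ by $Q$ in $d^2F(P)$ and $d^2G(P)$ only costs $O(\lambda^\alpha\|\tilde\eps\|_{H^1}^2)=O(s^{-2}\|\eps\|_{H^1}^2)$. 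The kinetic term rewrites as $\|\nabla\eps\|_2^2=\|\nabla\tilde\eps-i\tfrac{by}{2}\tilde\eps\|_2^2$, producing the magnetic extra terms $\tfrac{b^2}{8}\||y|\tilde\eps\|_2^2-\tfrac{b}{2}\Im\int y\cdot\bar{\tilde\eps}\nabla\tilde\eps$. Expressed in $\tilde\eps$, the virial becomes $bJ=\tfrac{b}{2}\Im\int\nabla\phi_A\cdot\bar{\tilde\eps}\nabla\tilde\eps-\tfrac{b^2}{4}\int(\nabla\phi_A\cdot y)|\tilde\eps|^2$. Since $\nabla\phi_A(y)=y$ for $|y|\leq A$, the two magnetic corrections inside the ball $|y|\leq A$ cancel against the corresponding pieces of $bJ$; outside, the residual terms are bounded by $A|b|\|\tilde\eps\|_{H^1}^2+b^2\|\tilde\eps\|_2^2\lesssim s^{-1}\|\eps\|_{H^1}^2$ once $A$ is fixed and $s_0$ is large.

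At this stage, the quadratic form in $H+bJ$ reduces to $\tfrac12\langle L_+\tilde\eps_1,\tilde\eps_1\rangle+\tfrac12\langle L_-\tilde\eps_2,\tilde\eps_2\rangle$ up to errors $o(1)\|\eps\|_{H^1}^2$. Translating the orthogonalities \eqref{eq:ortho} through the conjugation and the expansion $P=Q+\lambda^\alpha Z$ gives $|\psld{\tilde\eps_1}{|y|^2Q}|+|\psld{\tilde\eps_2}{\rho}|\lesssim s^{-2}\|\eps\|_{H^1}$, while the remaining direction $\psld{\tilde\eps_1}{Q}$, absent from \eqref{eq:ortho}, is precisely controlled by the extra orthogonality \eqref{eq:additional-ortho}: $|\psld{\tilde\eps_1}{Q}|\lesssim s^{-(K+1)}$. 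Plugging into \eqref{eq:coercivity} yields $H+bJ\gtrsim\|\eps\|_{H^1}^2-Cs^{-2(K+1)}$ for $s_0$ large, and dividing by $\lambda^4$ is the claim. The main obstacle is the careful bookkeeping of the magnetic corrections: one must verify that the specific shape of $\phi_A$ (quadratic inside, linear outside), combined with the bootstrap smallness of $b$ and $\lambda^\alpha$, leaves residuals of size $s^{-1}\|\eps\|_{H^1}^2$ in the genuine $H^1$ norm, not in an uncontrolled weighted norm involving $\||y|\tilde\eps\|_2$.
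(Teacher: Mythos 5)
There is a genuine gap, and it sits exactly where you flag ``the main obstacle'' without resolving it. After the conjugation $\tilde\eps=e^{ib|y|^2/4}\eps$, the cross term $-\tfrac b2\Im\int y\cdot\nabla\tilde\eps\,\bar{\tilde\eps}\,dy$ produced by $\tfrac12\norm{\nabla\eps}_2^2$ carries the unbounded weight $y$ on all of $\R^d$, whereas $bJ$ reproduces that weight only on $\{|y|\le A\}$ where $\nabla\phi_A=y$. The leftover is $\tfrac b2\Im\int_{|y|\ge A}(\nabla\phi_A-y)\cdot\nabla\tilde\eps\,\bar{\tilde\eps}\,dy$, whose weight $|\nabla\phi_A-y|$ grows linearly in $|y|$; it is controlled by $|b|\,\norm{\nabla\tilde\eps}_2\norm{(1+|y|)\tilde\eps}_2$, \emph{not} by $A|b|\norm{\tilde\eps}_{H^1}^2+b^2\norm{\tilde\eps}_2^2$ as you assert, and no bound on $\norm{y\eps}_2$ is available in the bootstrap \eqref{eq:smallness-4-s}. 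One can instead complete the square exactly,
\[
\tfrac12\norm{\nabla\eps}_2^2+bJ(\eps)=\tfrac12\int\left|\nabla\tilde\eps-i\tfrac b2\left(y-\nabla\phi_A\right)\tilde\eps\right|^2dy-\tfrac{b^2}8\int|\nabla\phi_A|^2|\tilde\eps|^2dy,
\]
and the last term is harmless, but then you no longer recover $\tfrac12\norm{\nabla\tilde\eps}_2^2$, which is what \eqref{eq:coercivity} needs; re-expanding the square brings the uncontrolled weighted term back. For the same reason $\norm{\tilde\eps}_{H^1}$ and $\norm{\eps}_{H^1}$ are not comparable up to small errors without a bound on $\norm{y\eps}_2$, so even your final conversion back to $\norm{\eps}_{H^1}$ is not justified. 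In short, the conjugation creates precisely the weighted-norm problem that the introduction of $\phi_A$ was designed to avoid.

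The repair is to not conjugate at all, which is the paper's (much shorter) route. Since $\nabla\phi_A$ is bounded, $|bJ(\eps)|\le |b|\norm{\nabla\phi_A}_\infty\norm{\eps}_2\norm{\nabla\eps}_2\lesssim s^{-1}\norm{\eps}_{H^1}^2$ outright: no cancellation between $bJ$ and the kinetic term is needed for the static coercivity (that fine structure only matters for the monotonicity in Proposition \ref{prop:Lyapunov}). It then suffices to prove coercivity of $H$ alone (Lemma \ref{lem:coercivity-of-H}): expand $F$ and $G$ around $P_b$ in the original variable $\eps=\eps_1+i\eps_2$; because the second-order terms come with exponentially localized weights and $P_b-Q=O\bigl((|b|+\lambda^\alpha)(1+|y|^2)Q\bigr)$, one gets $\bigl|H(s,\eps)-\tfrac12\dual{L_+\eps_1}{\eps_1}-\tfrac12\dual{L_-\eps_2}{\eps_2}\bigr|\lesssim s^{-1}\norm{\eps}_{H^1}^2$, with the phase of $P_b$ entering only through these localized error terms. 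The orthogonality conditions \eqref{eq:ortho} transfer to the directions $|y|^2Q$ and $i\rho$ up to $O(s^{-1}\norm{\eps}_{H^1})$, and the $Q$-direction is handled by \eqref{eq:additional-ortho}, which you do invoke correctly; then \eqref{eq:coercivity} concludes. So your overall skeleton (quadratic expansion, \eqref{eq:coercivity}, the extra orthogonality, $bJ$ as a perturbation) is the right one, but the phase-conjugation step must be removed for the argument to close.
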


\begin{proposition}\label{prop:Lyapunov}
For any $s\in[s_*,s_1]$,
\[
\frac{d}{d s}\left[ S(s,\eps(s))\right] \gtrsim 
\frac{b}{\lambda^{4}(s)}\left(\norm{\eps(s)}_{H^1}^2+O(s^{-2(K+1)}\right).
\]
\end{proposition}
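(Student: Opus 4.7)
The plan is to differentiate $S(s,\eps(s))$ explicitly and extract a dominant coercive quadratic form in $\eps$ of size $b\lambda^{-4}\norm{\eps}_{H^1}^2$, with all remaining contributions absorbed via the modulation bound \eqref{eq:estimate-modulation} and the source estimate \eqref{eq:error-term-estimate}. Substituting $\lambda_s/\lambda=-b+O(s^{-(K+2)})$ and $b_s=-b^2+\theta+O(s^{-(K+2)})$ from Lemma \ref{lem:estimate-modulation-equations} into the chain rule applied to $S=\lambda^{-4}(H+bJ)$, together with the crude upper bound $|H|+|bJ|\lesssim \norm{\eps}_{H^1}^2$, I would obtain
\[
\frac{d}{ds}S \;=\; \lambda^{-4}\Bigl(4b(H+bJ)+\partial_s H+b_s J+b\,\partial_s J\Bigr)+\mathrm{error},
\]
where the error is bounded by $\lambda^{-4}s^{-(K+2)}\norm{\eps}_{H^1}^2$, negligible relative to the target.

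Next I would compute $\partial_s H$ and $\partial_s J$ by using equation \eqref{eq:eps}. Decomposing $\partial_s H=dH_\eps\cdot\eps_s+\partial_s^{\mathrm{expl}}H$, the explicit part is a quadratic form in $\eps$ with coefficients of size $|\Mod(s)|+b+\lambda^\alpha\lesssim s^{-1}$, hence tolerable. Setting $L\eps:=-\Delta\eps+\eps-(f(P_b+\eps)-f(P_b))-\lambda^\alpha(g(P_b+\eps)-g(P_b))$, we have $dH_\eps\cdot\eps_s=\dual{L\eps}{\eps_s}$; writing \eqref{eq:eps} as $\eps_s=-iL\eps-b\Lambda\eps+(\mathrm{mod})+(\mathrm{source})$ the self-pairing $\dual{L\eps}{-iL\eps}$ vanishes by symmetry, yielding
\[
\partial_s H\;=\;-b\,\dual{L\eps}{\Lambda\eps}+O(s^{-1}\norm{\eps}_{H^1}^2)+O\!\bigl((|\Mod(s)|+\norm{\Psi}_{H^1})\norm{\eps}_{H^1}\bigr).
\]
A standard localized virial identity (integration by parts against $\nabla\phi_A$ after substituting \eqref{eq:eps}) gives, with analogous errors,
\[
\partial_s J\;\approx\;\int\phi''_A(|y|)\,|\partial_r\eps|^2\,dy-\tfrac14\!\int(\Delta^2\phi_A)|\eps|^2\,dy-\tfrac{2}{d+2}\!\int(\Delta\phi_A)|\eps|^{2+\frac4d}\,dy+(\text{couplings with }P_b),
\]
the $P_b$-couplings being harmless thanks to the exponential decay of $P_b$ on $\{|y|\gtrsim 1\}$ and the choice $A\gg 1$.

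Assembling the pieces, the main term of $\lambda^4\,dS/ds$ is
\[
\mathcal Q(\eps)\;:=\;b\bigl(4H-\dual{L\eps}{\Lambda\eps}+\partial_s J\bigr),
\]
and the crux is the coercivity $\mathcal Q(\eps)\geq \mu b\,\norm{\eps}_{H^1}^2$ modulo degenerate directions. Up to the discrepancy $P_b-Q=O_{H^1}(\lambda^\alpha)$ and the subcritical $\lambda^\alpha g$ perturbation, the $L^2$-antisymmetry of $\Lambda$ together with the algebra \eqref{algebra} rewrite $4H-\dual{L\eps}{\Lambda\eps}$ as a quadratic form equivalent to $\dual{L_+\eps_1}{\eps_1}+\dual{L_-\eps_2}{\eps_2}$. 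The three orthogonality conditions \eqref{eq:ortho} kill the $|y|^2Q$ and $\rho$ components of $\eps$, while the bound \eqref{eq:additional-ortho} stemming from $L^2$-conservation controls $\dual{\eps}{Q}$; we may therefore invoke \eqref{eq:coercivity} to deduce $\mathcal Q(\eps)\geq\mu b(\norm{\eps}_{H^1}^2-O(s^{-2(K+1)}))$. The non-negative Morawetz contribution $\int\phi''_A|\partial_r\eps|^2$ compensates for the absence of control on $\norm{y\eps}_2$ outside $\{|y|\leq A\}$, while the bi-Laplacian term is of size $A^{-2}\norm{\eps}_2^2$ and does not erode the spectral gap once $A$ is fixed large. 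Absorbing the remaining errors of the form $s^{-(K+2)}\norm{\eps}_{H^1}$ via Young's inequality completes the proof.

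The principal obstacle is the coercivity step just described: one must verify that the localized virial corrections in $\partial_s J$ and the subcritical perturbation $\lambda^\alpha g$ together preserve the spectral gap of $L_\pm$, uniformly in $A$ and for all $s\in[s_*,s_1]$. This is an explicit linear-algebraic computation in the spirit of \cite{RaSz11}, simplified here by the fact that $\lambda^\alpha\lesssim s^{-2}$ is genuinely lower order in the regime \eqref{eq:smallness-4-s}.
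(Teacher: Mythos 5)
Your skeleton (differentiate $S=\lambda^{-4}(H+bJ)$, substitute the modulation equations, keep the $b$-sized terms and combine $\tfrac{d}{ds}H$ with $b\tfrac{d}{ds}J$) is the paper's, but the decisive step is not correct as you state it. The form $4H-\dual{L\eps}{\Lambda\eps}$ is \emph{not} equivalent to $\dual{L_+\eps_1}{\eps_1}+\dual{L_-\eps_2}{\eps_2}$: since $\dual{-\Delta\eps}{\Lambda\eps}=\norm{\nabla\eps}_2^2$ and $\dual{\eps}{\Lambda\eps}=0$, this combination loses a full gradient and picks up virial potentials of the type $\int y\cdot\nabla\big(Q^{4/d}\big)|\eps|^2$, so \eqref{eq:coercivity} does not apply to it. The actual mechanism is different: the $b$-sized nonlinear term $\dual{f(P_b+\eps)-f(P_b)}{\Lambda\eps}$ coming from Lemma \ref{lem:derivative-H} must \emph{cancel} against $\dual{f(P_b+\eps)-f(P_b)}{\tfrac12\Delta\phi_A\,\eps+\nabla\phi_A\cdot\nabla\eps}$ coming from $b\,\tfrac{d}{ds}J$ (Lemma \ref{lem:derivative-J}), up to corrections carrying $\Delta\phi_A-d$ or $\nabla\phi_A-y$, which are supported in $|y|\gtrsim A$, plus terms of higher order in $\eps$. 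Your dismissal of the ``$P_b$-couplings'' in $\partial_sJ$ as harmless by exponential decay is exactly where this breaks: the quadratic coupling $\int df(P_b)\eps\,\big(\tfrac12\Delta\phi_A\bar\eps+\nabla\phi_A\cdot\nabla\bar\eps\big)$ is concentrated near $y=0$, where $\Delta\phi_A=d$ and $\nabla\phi_A=y$, is of size $\norm{\eps}_{H^1}^2$, and can only be removed by the cancellation, not by decay. After the cancellation what remains is $4bH-b\norm{\nabla\eps}_2^2+b\int\nabla\eps^T\nabla^2\phi_A\nabla\bar\eps$ (see \eqref{eq:nnew}), which is handled by splitting it as $2bH$ plus $b$ times the bracket of Lemma \ref{le:9bis}: the bracket has only a \emph{localized} gradient, so it needs the separate localized coercivity Lemma \ref{le:9bis} (giving $\gtrsim\norm{\eps}_2^2$), while the $H^1$ coercivity comes from $2bH$ and Lemma \ref{lem:coercivity-of-H}. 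This two-lemma splitting is the missing idea; the route you propose for what you yourself call the principal obstacle would not close.

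There is also a quantitative gap in your error bookkeeping. The explicit part $D_sH$ is not ``of size $|\Mod(s)|+b+\lambda^\alpha\lesssim s^{-1}$, hence tolerable'': an $O(b\norm{\eps}_{H^1}^2)$ error with an unspecified constant is of the same size as the coercive term you are extracting; one must use $|(P_b)_s|\lesssim s^{-2}e^{-|y|/2}$ (from \eqref{Ps} together with \eqref{eq:estimate-modulation}) to get $O(s^{-2}\norm{\eps}_{H^1}^2)$, which \emph{is} absorbable. Likewise, the pairing of $D_\eps H$ with $\ModOp(s)P_b$ cannot be bounded crudely by $|\Mod(s)|\,\norm{\eps}_{H^1}\lesssim s^{-(2K+2)}$: the paper first shows $\dual{D_\eps H}{\Lambda P_b}$, $\dual{D_\eps H}{iP_b}$, $\dual{D_\eps H}{i\tfrac{|y|^2}{4}P_b}=O(s^{-(K+1)})$ using \eqref{algebra}, the orthogonality conditions \eqref{eq:ortho} and the near-orthogonality \eqref{eq:additional-ortho}, and only then multiplies by $|\Mod(s)|\lesssim s^{-(K+2)}$, obtaining $O(s^{-(2K+3)})$. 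The factor $s$ you lose is precisely the difference between the stated $O(s^{-2(K+1)})$ inside the bracket and an $O(s^{-(2K+1)})$ error, and the latter is too large to run the comparison $\norm{\eps}_{H^1}\approx s^{-(K+1)}$ in the bootstrap of Lemma \ref{lem:bootstrap}.
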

 
The rest of this section is organized as follows. We first prove  Proposition \ref{prop:coercivity-of-S} in \S \ref{subsec:coercivity}. 
In \S \ref{subsec:time-derivative-H} we compute the time derivative of $H$ and in \S \ref{subsec:time-derivative-J}, the time derivative of $J$.
We finish the proof of Proposition \ref{prop:Lyapunov} in \S \ref{subsec:lyapunov}.

\subsection{\texorpdfstring{Coercivity of $S$}{Coercivity of S}}\label{subsec:coercivity}
We prove Proposition \ref{prop:coercivity-of-S}. We first claim a  coercivity property for $H$,   consequence of the   properties of $L_+$ and $L_-$ (see \eqref{eq:coercivity}) and of the orthogonality conditions of $\eps$ (see \eqref{eq:ortho}).

\begin{lemma}[Coercivity of $H$]\label{lem:coercivity-of-H}
For all $s\in[s_*,s_1]$, 
\[
H(s,\eps)\gtrsim 
\norm{\eps}_{H^1}^2+O(s^{-2(K+1)}).
\]
\end{lemma}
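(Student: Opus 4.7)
\emph{Plan.} The plan is to Taylor-expand the two nonlinear contributions in $H$ to second order around $P_b$, reduce the resulting quadratic form to its $Q$-counterpart up to errors of size $O(|b|+\lambda^\alpha)\|\eps\|_2^2\lesssim s^{-1}\|\eps\|_{H^1}^2$, and then apply the spectral coercivity \eqref{eq:coercivity}. The three bad directions $(Q,|y|^2Q,\rho)$ showing up there will be controlled by combining the exact orthogonalities \eqref{eq:ortho} (which are against $|y|^2 P_b$ and $i\rho_b$) with the mass-induced almost-orthogonality \eqref{eq:additional-ortho} against $Q$ coming from Lemma~\ref{lem:estimate-modulation-equations}.

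\emph{Step 1: reduction to $L_\pm$.} A second-order Taylor expansion, together with Sobolev embedding $H^1(\R^d)\hookrightarrow L^{2+4/d}\cap L^{p+1}$ in dimensions $d\leq 3$, yields
\[
\int \bigl(F(P_b+\eps)-F(P_b)-dF(P_b)\eps\bigr)dy = \tfrac12 \int d^2F(P_b)(\eps,\eps)\,dy + O\bigl(\|\eps\|_{H^1}^{2+\delta}\bigr)
\]
for some $\delta>0$, and analogously for $G$ with an extra factor $\lambda^\alpha$. A direct computation gives
\[
d^2F(z)(\eps,\eps)=|z|^{\frac 4d}|\eps|^2+\tfrac{4}{d}|z|^{\frac 4d-2}\bigl(\Re(\bar z\,\eps)\bigr)^2.
\]
Writing $P_b=Pe^{-ib|y|^2/4}$ with $|P_b|=|P|=Q+O_{L^\infty}(\lambda^\alpha)$ and $\arg P_b=-b|y|^2/4+O_{L^\infty}(b\lambda^\alpha)$, and exploiting the fact that $|y|^{2j}Q^{4/d}\in L^\infty(\R^d)$ for every $j\in\N$ (Gaussian decay of $Q$), I expect to obtain $\int d^2F(P_b)(\eps,\eps)dy=\int d^2F(Q)(\eps,\eps)dy+O(|b|+\lambda^\alpha)\|\eps\|_2^2$; the $\lambda^\alpha G$-term is directly $O(\lambda^\alpha)\|\eps\|_2^2$ since $|P_b|^{p-1}\in L^\infty$. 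Using $\|\nabla\eps\|_2^2+\|\eps\|_2^2-\int d^2F(Q)(\eps,\eps)=\dual{L_+\eps_1}{\eps_1}+\dual{L_-\eps_2}{\eps_2}$, this gives
\[
2H(\eps)=\dual{L_+\eps_1}{\eps_1}+\dual{L_-\eps_2}{\eps_2}+O(s^{-1})\|\eps\|_{H^1}^2+O(\|\eps\|_{H^1}^{2+\delta}).
\]

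\emph{Step 2: orthogonalities and conclusion.} To invoke \eqref{eq:coercivity} I need to bound $\psld{\eps_1}{Q}$, $\psld{\eps_1}{|y|^2Q}$ and $\psld{\eps_2}{\rho}$. The first is given by \eqref{eq:additional-ortho}: $|\psld{\eps_1}{Q}|=|\psld{\eps}{Q}|\lesssim s^{-(K+1)}$. From $\psld{\eps}{|y|^2 P_b}=0$ and the expansion $|y|^2 P_b=|y|^2Q+O_{L^2}(|b|+\lambda^\alpha)$ (again by boundedness of the weights $|y|^{2j}Q$), I get $|\psld{\eps_1}{|y|^2Q}|\lesssim s^{-1}\|\eps\|_2$. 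Similarly $\psld{\eps}{i\rho_b}=0$ with $\rho\in\mathcal Y$ gives $|\psld{\eps_2}{\rho}|\lesssim s^{-1}\|\eps\|_2$. Feeding these into \eqref{eq:coercivity} and combining with Step 1, the contributions $O(s^{-2})\|\eps\|_{H^1}^2$, $O(s^{-1})\|\eps\|_{H^1}^2$ and $O(\|\eps\|_{H^1}^{2+\delta})$ are all absorbed into $\tfrac{\mu}{2}\|\eps\|_{H^1}^2$ for $s_0$ large enough (using the bootstrap \eqref{eq:smallness-4-s}), while the mass-almost-orthogonality delivers the error $O(s^{-2(K+1)})$; this gives exactly $H(\eps)\gtrsim \|\eps\|_{H^1}^2-Cs^{-2(K+1)}$.

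\emph{Main obstacle.} The delicate point is the $P_b\to Q$ reduction in Step 1. A naive phase-change $\tilde\eps=e^{ib|y|^2/4}\eps$, designed to turn the operators at $P_b$ into operators at $Q$, would produce a spurious term $\tfrac{b^2}{4}\|y\eps\|_2^2$ out of $\|\nabla\eps\|_2^2$, and no weighted $L^2$ control on $\eps$ is available at this stage. The way around is to keep $\eps$ as the unknown and perform the comparison pointwise at the level of the scalar integrands $|P_b|^{\frac 4d}|\eps|^2$ and $|P_b|^{\frac 4d-2}(\Re(\bar P_b\eps))^2$: the $P_b\to Q$ differences always carry a factor $Q^{4/d}$ (or a similar Gaussian weight), whose decay neutralizes the polynomial factors $|y|^{2j}$ produced when expanding the phase $e^{-ib|y|^2/4}$ in powers of $b|y|^2$.
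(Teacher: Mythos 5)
Your proposal is correct and follows essentially the same route as the paper: a pointwise second-order expansion of the $F$- and $G$-terms around $P_b$ keeping $\eps$ as the unknown (with errors $O\bigl((|b|+\lambda^\alpha)|\eps|^2\bigr)$ plus higher-order terms absorbed by the bootstrap smallness), reduction to $\dual{L_+\eps_1}{\eps_1}+\dual{L_-\eps_2}{\eps_2}$, and control of the three degenerate directions via \eqref{eq:ortho} and \eqref{eq:additional-ortho} before invoking \eqref{eq:coercivity}. Only a cosmetic slip: $Q$ decays exponentially rather than like a Gaussian, but the bound $|y|^{2j}Q^{\frac4d}\in L^\infty(\R^d)$ you actually use still holds, so the argument is unaffected.
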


\begin{proof}
From the orthogonality conditions \eqref{eq:ortho}, \eqref{eq:additional-ortho}, and  estimates \eqref{eq:smallness-4-s}, the following holds:
\begin{align*}
\psld{\eps}{|y|^2 Q}&=\psld{\eps}{|y|^2P_b}+O(|b| \norm{\eps}_2)+O(\lambda^\alpha\norm{\eps}_2) =O(s^{-1}\norm{\eps}_{H^1}) ,\\
\psld{\eps}{i\rho}  &  =\psld{\eps}{i\rho_b} +O(|b|\norm{\eps}_2)=O(s^{-1}\norm{\eps}_{H^1}),\\
\psld{\eps}{ Q}&        =O(s^{-(K+1)}).
\end{align*}

From \eqref{eq:smallness-4-s}, we have 
\begin{equation*}
\lambda^\alpha \int_{\R^d} (G(P_b+\eps)-G(P_b)-dG(P_b)\eps)dx= O(s^{-2}\norm{\eps}_{H^1}^2).
\end{equation*}
Next, (denoting $\eps = \eps_1+ i\eps_2$),
$$\left|  F(P_b+\eps)-F(P_b)-dF(P_b)\eps 
- \left(1+\frac{4}d\right) Q^{\frac{4}{d}} \eps_1^2 - \frac 12 Q^{\frac{4}{d}} \eps_2^2\right|
  \lesssim e^{-\frac 12 |y|} |\eps|^3 +|\eps|^{2+\frac 4d}+ |\eps|^2 (|b|+\lambda^\alpha).
$$
Thus, from \eqref{eq:smallness-4-s}, 
$$\left| \int F(P_b+\eps)-F(P_b)-dF(P_b)\eps 
- \left(1+\frac{4}d\right) Q^{\frac{4}{d}} \eps_1^2 - \frac 12Q^{\frac{4}{d}} \eps_2^2\right|
  \lesssim 
 O(s^{-1}\norm{\eps}_{H^1}^2),
$$
and
$$
\left|H(s,\eps) - \frac 12 \dual{L_+\eps_1}{\eps_1}-\frac 12 \dual{L_-\eps_2}{\eps_2}\right|\lesssim 
O(s^{-1}\norm{\eps}_{H^1}^2) .
$$
Combining these estimates with the coercivity properties of   $L_+$, $L_-$ (see \eqref{eq:coercivity}),
we obtain the result.
\end{proof}

Since 
\[
|bJ(\eps)|\leq |b|\norm{\nabla\phi_A}_{\infty}\norm{\eps}^2_{H^1}\lesssim O(s^{-1}\norm{\eps}_{H^1}^2)
\]
(from \eqref{eq:smallness-4-s}), Lemma \ref{lem:coercivity-of-H} implies
Proposition \ref{prop:coercivity-of-S}. 

\bigskip

For future reference, we also claim the following localized coercivity property (see similar statement in \cite{MaMe02} and \cite{RaSz11}).
\begin{lemma}\label{le:9bis}
There exists $A_0>1$ such that for any $A>A_0$,
\[
\frac12\int_{\R^d}\phantom{}\nabla\eps^T   \nabla^2\phi_A\nabla\bar\eps dy+\frac12\norm{\eps}_2^2-\int_{\R^d} (F(P_b+\eps)-F(P_b)-dF(P_b)\eps)dx\gtrsim \norm{\eps}^2_{2}+O(s^{-2(K+1)}).
\]
\end{lemma}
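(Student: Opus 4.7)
The result is a localized version of Lemma \ref{lem:coercivity-of-H} in which the full kinetic energy $\tfrac12\|\nabla\eps\|_2^2$ is replaced by the weighted Hessian quadratic form $\tfrac12\int\nabla\eps^T\nabla^2\phi_A\nabla\bar\eps$. Two structural features of $\phi_A$ drive the proof: first, $\nabla^2\phi_A=\mathrm{Id}$ on the inner ball $\{|y|<A\}$ (since $\phi_A(y)=\tfrac12|y|^2$ there); second, by convexity of $\phi$, $\nabla^2\phi_A\geq 0$ everywhere on $\R^d$. Coupled with the exponential decay of $Q$ (and of every element of $\mathcal Y$), these facts mean that all the coercivity-inducing structure---the potential $Q^{4/d}$ and the scalar-product modes $Q$, $|y|^2Q$, $\rho$ entering \eqref{eq:coercivity}---is concentrated inside the inner region where $\nabla^2\phi_A$ behaves as the identity.

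My plan is to introduce a smooth radial cutoff $\chi$ with $\chi\equiv 1$ on $\{|y|\leq A/2\}$ and $\operatorname{supp}\chi\subset\{|y|<A\}$, and to analyze separately the contributions of $\chi\eps$ and $(1-\chi)\eps$. Expanding the nonlinearity exactly as in the proof of Lemma \ref{lem:coercivity-of-H},
\begin{equation*}
\int \bigl(F(P_b+\eps)-F(P_b)-dF(P_b)\eps\bigr)dy
=\int\Bigl(\bigl(1+\tfrac4d\bigr)Q^{4/d}\eps_1^2+\tfrac12 Q^{4/d}\eps_2^2\Bigr)dy
+O(s^{-1}\|\eps\|_{H^1}^2),
\end{equation*}
and the leading quadratic piece is supported, up to an $e^{-A/2}$-small remainder, in $\{|y|<A/2\}$. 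On this inner region $\nabla^2\phi_A=\mathrm{Id}$, so the quadratic form on $\chi\eps$ reduces to $\tfrac12\langle L_+(\chi\eps)_1,(\chi\eps)_1\rangle+\tfrac12\langle L_-(\chi\eps)_2,(\chi\eps)_2\rangle$. Transferring the orthogonality conditions \eqref{eq:ortho} and \eqref{eq:additional-ortho} from $\eps$ to $\chi\eps$ costs only errors of the form $Ce^{-A/4}\|\eps\|_2+O(s^{-1}\|\eps\|_{H^1})+O(s^{-(K+1)})$ (because the test functions all belong to $\mathcal Y$), and applying \eqref{eq:coercivity} then yields the full $H^1$-coercivity for $\chi\eps$ up to $O(s^{-2(K+1)})$. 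On the outer region, the exponential decay of $Q$ bounds the potential contribution by $Ce^{-A/2}\|\eps\|_2^2$, while the non-negativity of $\nabla^2\phi_A$ and the mass term $\tfrac12\|(1-\chi)\eps\|_2^2$ give at least $\tfrac14\|(1-\chi)\eps\|_2^2$ of coercivity for $A\geq A_0$ large. Summing the two pieces produces the lower bound $\gtrsim \|\eps\|_2^2+O(s^{-2(K+1)})$.

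The principal technical obstacle is the commutator produced by the cutoff, namely the identification of $\tfrac12\int\chi^2\nabla\eps^T\nabla^2\phi_A\nabla\bar\eps$ with $\tfrac12\int\nabla(\chi\eps)^T\nabla^2\phi_A\nabla\overline{(\chi\eps)}$: this generates annular error terms of the form $\int_{A/2<|y|<A}|\nabla\chi|^2|\eps|^2$, which are absorbed by the positive outer $L^2$-contribution once $A\geq A_0$ is chosen large enough. This is the standard localization mechanism of \cite{MaMe02,RaSz11}, and the proof concludes by fixing $A_0$ large so that all exponentially small prefactors beat the positive side and the $s^{-1}$-errors coming from the replacement of $P_b$ by $Q$ in the expansion and in the orthogonality conditions are absorbed into $O(s^{-2(K+1)})$.
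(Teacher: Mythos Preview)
The paper does not actually prove Lemma \ref{le:9bis}; it merely states the result and refers to \cite{MaMe02,RaSz11} for analogous statements. Your sketch is precisely the standard localization argument from those references: split via a cutoff at scale $A$, exploit $\nabla^2\phi_A=\mathrm{Id}$ on the inner ball to recover the $L_\pm$ quadratic form on $\chi\eps$ and invoke \eqref{eq:coercivity}, then use $\nabla^2\phi_A\geq 0$ together with the exponential decay of $Q$ on the outer region, absorbing the commutator $\int|\nabla\chi|^2|\eps|^2\lesssim A^{-2}\|\eps\|_2^2$ by taking $A$ large.

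One small correction to your final sentence: the $O(s^{-1}\|\eps\|_{H^1}^2)$ errors are not absorbed into $O(s^{-2(K+1)})$ as a block. Rather, the part coming from $(|b|+\lambda^\alpha)\int|\eps|^2\lesssim s^{-1}\|\eps\|_2^2$ is absorbed directly into the $L^2$-coercivity for $s$ large, while the genuinely higher-order pieces $\int e^{-|y|/2}|\eps|^3+\int|\eps|^{2+4/d}\lesssim \|\eps\|_{H^1}^3+\|\eps\|_{H^1}^{2+4/d}$ become $O(s^{-2(K+1)})$ only after invoking the bootstrap bound $\|\eps\|_{H^1}\lesssim s^{-K}$ from \eqref{eq:smallness-4-s}. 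The $s^{-2}\|\eps\|_{H^1}^2$ terms arising from the near-orthogonality conditions do reduce to $O(s^{-2(K+1)})$ by the same bootstrap. With this bookkeeping adjusted, your argument is complete and matches the intended proof.
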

For now on, we consider $A>A_0$.

\subsection{\texorpdfstring{Time variation of the energy of $\eps$}{Time variation of the energy of ε} }\label{subsec:time-derivative-H}
\begin{lemma}\label{lem:derivative-H} For all $s\in[s_*,s_1]$, 
\[
\frac{d}{d s}[H(s,\eps(s))]=
\frac{\lambda_s}{\lambda}\left(\norm{\nabla\eps}_2^2-\dual{f(P_b+\eps)-f(P_b)}{\Lambda\eps}\right)
+O(s^{-(2K+3)})+O(s^{-2}\norm{\eps}_{H^1}^2).
\]
\end{lemma}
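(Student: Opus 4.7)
The plan is to differentiate $H(s,\eps(s))$ via the chain rule, separating the variation in $\eps$ from the explicit dependence through $P_b(s)$ and $\lambda^\alpha(s)$, and then to substitute equation \eqref{eq:eps} for $\eps_s$ to extract the $\lsl$ leading term.

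First, I would write $\frac{d}{ds}H(s,\eps(s)) = \psld{V}{\eps_s} + \mathcal R$, where
\[
V := -\Delta\eps + \eps - [f(P_b+\eps)-f(P_b)] - \lambda^\alpha[g(P_b+\eps)-g(P_b)]
\]
is the $L^2$-gradient of $H$ in $\eps$ (with $b,\lambda$ frozen), and $\mathcal R$ collects the $s$-derivatives falling on $P_b$ and on $\lambda^\alpha$. Expanding $\mathcal R$ yields a sum of three pairings whose $\eps$-integrand is quadratic (the $F$- and $G$-remainders vanish to second order in $\eps$) and whose other factor is $\partial_s P_b$ or $\lambda^{\alpha-1}\lambda_s$. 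Using Lemma \ref{lem:estimate-modulation-equations} together with $\lambda^\alpha \lesssim s^{-2}$ and $b_s,\lsl = O(s^{-1})$, one finds $\partial_s P_b = O(s^{-2})$ in the weighted norms that matter, so $\mathcal R = O(s^{-2}\|\eps\|_{H^1}^2)$.

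Next, I would substitute equation \eqref{eq:eps} into $V$. The identity $\psld{i\eps_s}{\eps_s}=0$ eliminates one term, and the algebraic cancellation $b-(b+\lsl)=-\lsl$ merges the two $\Lambda\eps$ contributions into a single piece with coefficient $-i\lsl$. What remains in $\psld{V}{\eps_s}$ is a sum of pairings of $\eps_s$ against functions premultiplied either by a component of $\Mod(s)$ (all $\lesssim s^{-(K+2)}$ by Lemma \ref{lem:estimate-modulation-equations}) or by $\Psi e^{-ib|y|^2/4}$ (with $\Psi$ bounded by $s^{-(K+4)}$ in weighted $H^1$). For the pairings with $\phi \in \{i\Lambda P_b,\,|y|^2 P_b,\,i\rho_b\}$, differentiating the orthogonality conditions \eqref{eq:ortho} in $s$ converts $\psld{\phi}{\eps_s}$ into $-\psld{\partial_s\phi}{\eps}$ with $\|\partial_s\phi\|\lesssim s^{-2}$, yielding $|\psld{\phi}{\eps_s}| \lesssim s^{-2}\|\eps\|_{L^2}$; for $\phi = P_b$ a comparable bound follows from mass conservation combined with Lemma \ref{lem:estimate-modulation-equations}. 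All these contributions fit inside $O(s^{-(2K+3)}) + O(s^{-2}\|\eps\|_{H^1}^2)$.

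Finally, I would process the leading piece $\psld{-i\lsl\,\Lambda\eps}{\eps_s} = \lsl\,\psld{\Lambda\eps}{i\eps_s}$ by substituting $i\eps_s$ from \eqref{eq:eps} once more. The three algebraic facts
\[
\psld{\Lambda\eps}{-\Delta\eps} = \|\nabla\eps\|_2^2, \quad \psld{\Lambda\eps}{\eps} = 0, \quad \psld{\Lambda\eps}{ib\Lambda\eps} = 0
\]
extract the main term $\|\nabla\eps\|_2^2 - \psld{f(P_b+\eps)-f(P_b)}{\Lambda\eps}$. Using the anti-self-adjointness of $\Lambda$, the remaining modulation contributions become $\psld{\eps}{\Lambda(\cdot)}$ pairings bounded by $s^{-(K+2)}\|\eps\|_{L^2}$, while the $\lambda^\alpha g$-piece is $O(s^{-2}\|\eps\|_{H^1}^2)$. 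Multiplying by $\lsl = O(s^{-1})$ produces the claimed identity with error $O(s^{-(2K+3)}) + O(s^{-2}\|\eps\|_{H^1}^2)$. The main obstacle is precisely in obtaining the sharp $O(s^{-(2K+3)})$ rather than the naive $O(s^{-(2K+2)})$: the extra $s^{-1}$ is supplied jointly by the $\lsl$ prefactor produced by the algebraic cancellation of the third paragraph and by the orthogonality trick of that paragraph, both of which rely in an essential way on the orthogonality conditions of Lemma \ref{lem:modulation}.
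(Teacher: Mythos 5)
Your overall scheme is sound and, at the strategic level, matches the paper's: split $\frac{d}{ds}H$ into the explicit $s$-dependence (which is quadratic in $\eps$ and gives $O(s^{-2}\norm{\eps}_{H^1}^2)$, since $|(P_b)_s|\lesssim s^{-2}e^{-|y|/2}$ and $|\lsl|\lambda^\alpha\lesssim s^{-3}$) plus the pairing of $D_\eps H$ with $\eps_s$, then use \eqref{eq:eps} and the cancellation of the two $\Lambda\eps$ terms into $-i\lsl\Lambda\eps$ to isolate the leading term, with the orthogonality structure supplying the extra factor $s^{-1}$ beyond the naive $s^{-(2K+2)}$. The bookkeeping, however, is dual to the paper's. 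The paper writes $\dual{D_\eps H}{\eps_s}=\dual{iD_\eps H}{i\eps_s}$ and substitutes the equation for $i\eps_s$ (using $\dual{iD_\eps H}{D_\eps H}=0$), so every remaining pairing involves $D_\eps H$ — a function of $\eps$ alone — tested against $\Lambda P_b$, $iP_b$, $i|y|^2P_b$, $\Lambda\eps$, $\eps$, $\Psi$; these are estimated by expanding $D_\eps H$ through $L_\pm$, moving $L_\pm$ onto the test functions via \eqref{algebra}, and invoking \eqref{eq:ortho}, \eqref{eq:additional-ortho} and the near-orthogonality $\psld{\eps}{P_b}=O(s^{-(K+3)})$ from Lemma \ref{lem:estimate-modulation-equations}. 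You instead substitute for $V=D_\eps H$, leaving pairings of explicit functions against $\eps_s$, which you control by differentiating the orthogonality relations in $s$ ($\psld{\eps_s}{\phi}=-\psld{\eps}{\partial_s\phi}$ with $\|\partial_s\phi\|\lesssim s^{-2}$). Both routes give the stated error; your orthogonality-differentiation device even yields slightly better powers on the modulation terms.

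There are, however, two terms in your expansion that the devices you name do not cover, because they still contain the uncontrolled factor $\eps_s$. First, the pairing $\psld{\Psi e^{-ib|y|^2/4}}{\eps_s}$: the bound $\Psi=O(s^{-(K+4)})$ in weighted $H^1$ is not by itself enough, since you have no a priori bound on $\eps_s$; you must pass through the equation once more (as you do for the $\lsl\psld{\Lambda\eps}{i\eps_s}$ term), writing $\psld{\Psi e^{-ib|y|^2/4}}{\eps_s}=\psld{i\Psi e^{-ib|y|^2/4}}{i\eps_s}$ and integrating by parts once so that only $\nabla\Psi$ (controlled by \eqref{eq:error-term-estimate}) is needed against $\nabla\eps$. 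Second, the term $(1-\gamma_s)\psld{\eps}{\eps_s}$ is not among your listed directions; it is handled either by the same second substitution (using $\psld{i\eps}{-\Delta\eps+\eps}=0$), or more elegantly by keeping the coefficient $(1-\gamma_s)$ attached to $P_b+\eps$ as in \eqref{eq:eps} and using $\norm{P_b+\eps}_2=\norm{u}_2=\mathrm{const}$ together with \eqref{dmass}, which controls $\psld{P_b+\eps}{\eps_s}$ as a whole (your mass-conservation remark for $\phi=P_b$ alone does not quite close, since $\frac{d}{ds}\psld{\eps}{P_b}$ reintroduces $\psld{\eps}{\eps_s}$). These are routine repairs, but as written they are gaps; note that the paper's direction of substitution avoids them entirely, since no pairing with $\eps_s$ ever has to be estimated.
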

 
\begin{proof}[Proof of Lemma \ref{lem:derivative-H}]
The  time derivative for $H$ separates into two parts:
\[
\frac{d}{d s}[H(s,\eps(s))]= D_s H(s,\eps) + \dual{D_\eps H(s,\eps)}{\eps_s},
\]
where $D_s$ (respectively, $D_\eps$) denotes differentiation of the functional with respect to $s$ (respectively, $\eps$). In particular,
\begin{align*}
D_s H(s,\eps) & = - \int (P_b)_s \left( f(P_b+\eps) - f(P_b) - df(P_b)\eps \right) 
-\lambda^\alpha \int (P_b)_s \left( g(P_b+\eps)-g(P_b) - dg(P_b)\eps\right) \\
&- \alpha \frac {\lambda_s}{\lambda} \lambda^\alpha \int \left( G(P_b+\eps) - G(P_b) - dG(P_b)\eps\right).
\end{align*}
Note that 
$$e^{ i \frac {b |y|^2}{4}} (P_b)_s  =  P_s  - i b_s \frac {|y|^2}{4} P 
=  P_s - i \left(b_s +b^2 - \beta \lambda^\alpha\right) \frac {|y|^2}{4} P +i\left(b^2 - \beta \lambda^\alpha\right) \frac {|y|^2}{4} P.
$$
By \eqref{Ps},  \eqref{eq:smallness-4-s} and  Lemma \ref{lem:estimate-modulation-equations}, we obtain
$$
|(P_b)_s | \lesssim s^{-2} e^{-\frac{|y|}{2}} \quad \hbox{and} \quad \left|\frac {\lambda_s}{\lambda}\right|\lambda^\alpha \lesssim s^{-3}.
$$
Thus,
\begin{equation*}
| D_s H(s,\eps) | \lesssim s^{-2} \|\eps\|_{H^1}^2.
\end{equation*}

Now, we compute $\dual{D_\eps H(s,\eps)}{\eps_s}$.
Note that  \eqref{eq:eps} rewrites
\begin{equation}\label{47bis}
i\eps_s-{D_\eps} H(s,\eps)+\ModOp(s)P_b-i\frac{\lambda_s}{\lambda} \Lambda\eps
+(1-\gamma_s)\eps
+e^{-ib\frac{|y|^2}{4}}\Psi=0,
\end{equation}
where  
\[
\ModOp(s)P_b:=-i\left(b+\frac{\lambda_s}{\lambda}\right)\Lambda P_b
+(1-\gamma_s)P_b
+(b_s+b^2-\theta)\frac{|y|^2}{4}P_b.
\]
Using  \eqref{47bis}, since $\dual{i{D_\eps} H(s,\eps)}{{D_\eps}H(s,\eps)}=0$, we have
 \begin{multline}\label{eq:to-be-estimated}
\dual{{D_\eps} H(s,\eps)}{\eps_s}=\dual{i{D_\eps} H(s,\eps)}{i\eps_s}=
-\dual{i{D_\eps} H(s,\eps)}{\ModOp(s)P_b}+
\frac{\lambda_s}{\lambda} \dual{i{D_\eps} H(s,\eps)}{i\Lambda\eps}\\
-(1-\gamma_s)\dual{i{D_\eps} H(s,\eps)}{\eps}
-\dual{i{D_\eps} H(s,\eps)}{e^{-ib\frac{|y|^2}{4}}\Psi}.
\end{multline}
From  the proof of Lemma \ref{lem:estimate-modulation-equations}   
\begin{multline*}
{D_\eps} H(s,\eps)=
-\Delta \eps+\eps-(f(P_b+\eps)-f(P_b))-\lambda^{\alpha}(g(P_b+\eps)-g(P_b))
\\
=e^{-ib\frac{|y|^2}{4}}\left(L_+\Re\left(e^{ib\frac{|y|^2}{4}}\eps\right)+iL_-\Im\left(e^{ib\frac{|y|^2}{4}}\eps\right)\right)+ib\Lambda\eps+b^2\frac{|y|^2}{4}\eps+ O(s^{-2} |\eps|).
\end{multline*}
Therefore, using the orthogonality conditions \eqref{eq:ortho}, \eqref{eq:additional-ortho} and   estimates \eqref{eq:smallness-4-s}, we have (see also proof of Lemma \ref{lem:estimate-modulation-equations}),
\begin{equation*}
\dual{{D_\eps} H(s,\eps)}{\Lambda P_b}=-2\psld{\eps}{P_b}+b\psld{\eps}{i\Lambda P_b}+ O(s^{-2}\norm{\eps}_2)=O(s^{-(K+1)}).
\end{equation*}
Thus, from Lemma \ref{lem:estimate-modulation-equations},
$$\left|\frac {\lambda_s}{\lambda} + b\right| \left|\dual{{D_\eps} H(s,\eps)}{\Lambda P_b}\right|\lesssim
O(s^{-(2K+3)}).
$$

Using similar arguments we get
$$\dual{{D_\eps} H(s,\eps)}{ iP_b} 
 =-4\psld{\eps}{\Lambda P_b}+O(s^{-1}\norm{\eps}_2)=O(s^{-1}\norm{\eps}_2)=O(s^{-(K+1)})
$$
and  
$$
\dual{{D_\eps} H(s,\eps)}{ i\frac{|y|^2}{4}P_b}=
\psld{\eps}{\rho_b}+O(s^{-1}\norm{\eps}_2)=O(s^{-1}\norm{\eps}_2)=O(s^{-(K+1)}).
$$
Using Lemma \ref{lem:estimate-modulation-equations}, we obtain
  in conclusion for this term
\[
\dual{i{D_\eps} H(s,\eps)}{\ModOp(s)P_b}=O(s^{-(2K+3)}).
\]

Next, we have
\begin{multline*}
\dual{i{D_\eps} H(s,\eps)}{i\Lambda\eps}=\dual{{D_\eps} H(s,\eps)}{\Lambda\eps}=\\
\dual{-\Delta \eps+\eps-(f(P_b+\eps)-f(P_b))-\lambda^\alpha (g(P_b+\eps)-g(P_b))}{\Lambda\eps}.
\end{multline*}
Note that (by direct computations)
\[
\dual{-\Delta \eps}{\Lambda\eps}=\norm{\nabla\eps}_2^2,\quad \dual{ \eps}{\Lambda\eps}=0,
\]
and by \eqref{eq:smallness-4-s},
$$
\left|\dual{\lambda^\alpha (g(P_b+\eps)-g(P_b))}{\Lambda\eps}\right|\lesssim O(s^{-2}\norm{\eps}^2_{H^1}).
$$
Thus,
$$
\frac{\lambda_s}{\lambda} \dual{i{D_\eps} H(s,\eps)}{i\Lambda\eps}=
\frac{\lambda_s}{\lambda}\left(\norm{\nabla\eps}_2^2-\dual{f(P_b+\eps)-f(P_b)}{\Lambda\eps}\right)+O(s^{-3} \|\eps\|_{H^1}^2).
$$
 
For the third term in the right-hand side of \eqref{eq:to-be-estimated}, we claim
\begin{multline*}
|(1-\gamma_s)\dual{i{D_\eps} H(s,\eps)}{\eps}|=\Big|(1-\gamma_s)\dual{(f(P_b+\eps)-f(P_b))+\lambda^\alpha (g(P_b+\eps)-g(P_b))}{\eps}\Big|\\
\lesssim
|\Mod(s)|\Big(
\norm{\eps}_2^2+\norm{\eps}_{H^1}^{2+\frac4d}\Big)=O(s^{-4}\norm{\eps}_{H^1}^2).
\end{multline*}

Finally, the fourth term in the right-hand side of  \eqref{eq:to-be-estimated} 
 is estimated by  \eqref{eq:error-term-estimate} combined with Lemma~\ref{lem:estimate-modulation-equations} and    \eqref{eq:smallness-4-s}
 \[
|\dual{i{D_\eps} H(s,\eps)}{\Psi}|\leq O(s^{-(K+4)}\norm{\eps}_{H^1})
\leq O(s^{-(2K+3)})+O(s^{-5}\norm{\eps}_{H^1}^2).
\]
Gathering these estimates, we have proved the lemma.
 \end{proof}

\subsection{The time derivative of the Morawetz part}\label{subsec:time-derivative-J}

\begin{lemma}\label{lem:derivative-J}
For all $s\in[s_*,s_1]$, 
\begin{multline*}
\frac{d}{d s}[J(\eps(s))]=\int_{\R^d}\phantom{}\nabla\eps^T   \nabla^2\phi_A\nabla\bar\eps dy
-\frac14\int_{\R^d}|\eps|^2\Delta^2\phi_Ady\\
-\dual{f(P_b+\eps)-f(P_b)}{\frac12\Delta\phi_A\eps+\nabla\phi_A\nabla \eps}
+O(s^{-(2K+2)})+O(s^{-2}\norm{\eps}_{H^1}^2).
\end{multline*}
\end{lemma}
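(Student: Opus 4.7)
The strategy is to differentiate $J$ in $s$, substitute the equation for $\eps$, and then identify the main terms via the classical Morawetz/virial integration by parts. Since $J$ is a quadratic form in $(\eps,\bar\eps)$ with $s$-independent, real weight $\nabla\phi_A$, an integration-by-parts computation (using reality of $\phi_A$ and $\Im(\bar z) = -\Im(z)$) yields the clean formula
\[
\frac{d}{ds}J(\eps(s)) = \psld{i\eps_s}{\nabla\phi_A\cdot\nabla\eps + \tfrac12\Delta\phi_A\,\eps}.
\]
I would then insert \eqref{47bis}, namely $i\eps_s = D_\eps H(s,\eps) + i\lsl\Lambda\eps - \ModOp(s) P_b - (1-\gamma_s)\eps - e^{-ib|y|^2/4}\Psi$, and split the pairing into five groups; denote $F := \nabla\phi_A\cdot\nabla\eps + \tfrac12\Delta\phi_A\,\eps$.

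The main contribution comes from $\psld{D_\eps H(s,\eps)}{F}$, where $D_\eps H = -\Delta\eps + \eps - (f(P_b+\eps)-f(P_b)) - \lambda^\alpha(g(P_b+\eps)-g(P_b))$. Two integrations by parts in $\psld{-\Delta\eps}{F}$ (as in the classical Morawetz identity) give the quadratic form $\int\nabla\eps^T\nabla^2\phi_A\nabla\bar\eps - \tfrac14\int|\eps|^2\Delta^2\phi_A$. The linear piece $\psld{\eps}{F}$ vanishes identically, because $\Re\int\eps\,\nabla\phi_A\cdot\nabla\bar\eps = -\tfrac12\int\Delta\phi_A|\eps|^2$ exactly cancels the $\tfrac12\Delta\phi_A$ half. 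The mass-critical nonlinear piece yields $-\dual{f(P_b+\eps)-f(P_b)}{F}$, i.e.\ the displayed nonlinear term. The subcritical piece is estimated crudely by $\lambda^\alpha\|\nabla\phi_A\|_\infty\|\eps\|_{H^1}^2\lesssim s^{-2}\|\eps\|_{H^1}^2$ using \eqref{eq:smallness-4-s}, and absorbed into the error.

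The remaining four pairings are absorbed into $O(s^{-(2K+2)}) + O(s^{-2}\|\eps\|_{H^1}^2)$ using Lemma \ref{lem:estimate-modulation-equations} and \eqref{eq:smallness-4-s}. The modulation pairing $\psld{\ModOp(s)P_b}{F}$ is bounded by $|\Mod(s)|\lesssim s^{-(K+2)}$ times pairings against the exponentially-decaying profiles $P_b$, $\Lambda P_b$ and $|y|^2 P_b$, giving $O(s^{-(K+2)}\|\eps\|_{H^1})$, absorbed by Young's inequality. The term $(1-\gamma_s)\psld{\eps}{F}$ vanishes by the same cancellation as in the linear piece. Combining \eqref{eq:error-term-estimate} with Lemma \ref{lem:estimate-modulation-equations} gives $\|\Psi\|_{H^1}\lesssim s^{-(K+4)}$ for $K$ large, so $\psld{e^{-ib|y|^2/4}\Psi}{F} = O(s^{-(K+4)}\|\eps\|_{H^1})$ is likewise absorbed.

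The main obstacle is the virial pairing $\lsl\psld{i\Lambda\eps}{F}$. Writing $\Lambda\eps = \tfrac{d}{2}\eps + y\cdot\nabla\eps$, the crucial observation is that radial symmetry of $\phi_A$ kills the cross term $\psld{iy\cdot\nabla\eps}{\nabla\phi_A\cdot\nabla\eps}$: this is an integral of the symmetric tensor $y_j\partial_k\phi_A = \phi_A'(|y|)y_j y_k/|y|$ against the antisymmetric object $\Re(i\partial_j\eps\,\partial_k\bar\eps) = \partial_j\eps_1\partial_k\eps_2 - \partial_j\eps_2\partial_k\eps_1$, hence vanishes. What remains reduces to a multiple of $J(\eps)$ plus a remainder controlled by $\int|\Delta\phi_A|\,|y|\,|\nabla\eps|\,|\eps|\,dy$, each bounded by a constant (depending on $A$) times $\|\eps\|_{H^1}^2$; combined with $|\lsl|\lesssim s^{-1}$ and with the $b$-prefactor that multiplies $\frac{d}{ds}J$ when this lemma is applied inside Proposition \ref{prop:Lyapunov}, the claimed absorption into the error terms is achieved.
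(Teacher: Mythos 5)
Your proposal follows the paper's route almost verbatim: the same formula $\frac{d}{ds}J=\psld{i\eps_s}{\nabla\phi_A\cdot\nabla\eps+\tfrac12\Delta\phi_A\eps}$, substitution of the $\eps$-equation, the Morawetz identity for the $-\Delta\eps$ part, the exact vanishing of the linear piece, keeping the critical nonlinear pairing as the displayed term, and absorbing the subcritical, modulation and $\Psi$ contributions exactly as the paper does; all of those steps are correct and match the paper's proof.

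The one place you deviate is the virial pairing $\frac{\lambda_s}{\lambda}\,\psld{i\Lambda\eps}{\nabla\phi_A\cdot\nabla\eps+\tfrac12\Delta\phi_A\eps}$, and there your argument does not establish the lemma \emph{as stated}. The paper disposes of this term by asserting the exact cancellation $\Re\int i\Lambda\eps\left(\tfrac12\Delta\phi_A\bar\eps+\nabla\phi_A\cdot\nabla\bar\eps\right)dy=0$ (the identity $\Re\int iu\,\bar u=0$ with the multiplier equal to $\Lambda\eps$, which is exact when $\phi_A=\tfrac12|y|^2$), so that nothing of size $s^{-1}\norm{\eps}_{H^1}^2$ survives. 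Your treatment instead leaves a remainder bounded only by $C_A\norm{\eps}_{H^1}^2$, hence a contribution $O_A(s^{-1}\norm{\eps}_{H^1}^2)$, which is \emph{not} contained in the allowed errors $O(s^{-(2K+2)})+O(s^{-2}\norm{\eps}_{H^1}^2)$; appealing to the extra factor $b$ with which $\frac{d}{ds}J$ enters Proposition \ref{prop:Lyapunov} amounts to proving a weaker statement and deferring the bookkeeping, not to proving this lemma. To be fair, your instinct that something survives the truncation is right: computing radially, the pairing equals $\tfrac12\int\bigl(\phi_A'(r)-r\,\phi_A''(r)\bigr)\bigl(\eps_1\partial_r\eps_2-\eps_2\partial_r\eps_1\bigr)dy$, which vanishes identically on $\{|y|<A\}$ where $\phi_A=\tfrac12|y|^2$ but not outside, so the paper's ``$=0$'' is itself exact only up to this commutator term supported in $\{|y|\geq A\}$, of precisely the size you found and harmless downstream for the reason you give. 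Still, as a proof of the statement in question there is a gap at this step: you must either justify the cancellation the paper invokes, or restate the lemma with the additional term (e.g. $b\,O_A(\norm{\eps}_{H^1}^2)$ on the right-hand side) and carry it through the proof of Proposition \ref{prop:Lyapunov}.
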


\begin{proof}
From the definition of $J(\eps)$, we have
\[
\frac{d}{d s}[J(\eps(s))]=\Re\int_{\R^d}i\eps_s \left(\frac12\Delta\phi_A\bar\eps+\nabla\phi_A\nabla \bar\eps\right)dy.
\]
We replace $i\eps_s$ using  \eqref{eq:eps}. First, from standard computations 
\begin{gather*}
\Re\int_{\R^d}-\Delta\eps\left(\frac12\Delta\phi_A\bar\eps+\nabla\phi_A\nabla \bar\eps\right)dy=\int_{\R^d}\phantom{}\nabla\eps^T   \nabla^2\phi_A\nabla\bar\eps dy-\frac14\int_{\R^d}|\eps|^2\Delta^2\phi_Ady,
\\
\Re\int_{\R^d}\eps\left(\frac12\Delta\phi_A\bar\eps+\nabla\phi_A\nabla \bar\eps\right)dy=0,
\\
\frac{\lambda_s}{\lambda}\Re\int_{\R^d}i\Lambda\eps\left(\frac12\Delta\phi_A\bar\eps+\nabla\phi_A\nabla \bar\eps\right)dy=0.
\end{gather*}
Next,\[
\lambda^{\alpha}\Re\int_{\R^d}(g(P_b+\eps)-g(P_b)) \left(\frac12\Delta\phi_A\bar\eps+\nabla\phi_A\nabla \bar\eps\right)dy=O(\lambda^{\alpha}\norm{\eps}_{H^1}^2)=O(s^{-2}\norm{\eps}_{H^1}^2).
\]
The term corresponding to the second line of \eqref{eq:eps} is estimated as follows.
\begin{multline*}
\left|\dual{-i(b+\frac{\lambda_s}{\lambda})\Lambda (P_b+\eps)
+(1-\gamma_s)(P_b+\eps)
-(b_s+b^2-\theta)\frac{|y|^2}{4}P_b}
{\frac12\Delta\phi_A \eps+\nabla\phi_A\nabla \eps}\right|\\
\lesssim |\Mod(s)| \norm{\eps}_{H^1}\lesssim O(s^{-(2K+2)}).
\end{multline*}
Finally, by  \eqref{eq:error-term-estimate} and Lemma~\ref{lem:estimate-modulation-equations},
 $$
 \left| \dual{\Psi e^{-i \frac{b |y|^2}{4}}}{\frac12\Delta\phi_A\bar\eps+\nabla\phi_A\nabla \bar\eps}\right| 
 \leq O(s^{-(K+4)}\norm{\eps}_{H^1})
\leq O(s^{-(2K+4)}).$$
The result follows.
\end{proof}

\subsection{The Lyapunov property}  \label{subsec:lyapunov}

\begin{proof}[Proof of Proposition \ref{prop:Lyapunov}]
By   definition of $S$, we have
\[
\frac{d}{d s}[S(s,\eps(s))]=\frac{1}{\lambda^{4}}\left( -4\frac{\lambda_s}{\lambda} \left( H(s,\eps)+bJ(\eps) \right) + 
\frac{d}{d s} [H(s,\eps(s))]
+b\frac{d}{d s}[J(\eps(s))]+ b_sJ(\eps) \right)
\]

First, we claim the following estimate
\begin{equation}\label{eq:nnew}
\frac{d}{d s} [H(s,\eps(s))]
+b\frac{d}{d s}[J(\eps(s))]=
b\int_{\R^d}\phantom{}\nabla\eps^T   \nabla^2\phi_A\nabla\bar\eps dy-b\norm{\nabla\eps}_2^2+
\frac{b}{A}O(\norm{\eps}_{H^1}^2)+  O(s^{-(2K+3)}) .
\end{equation}
Proof of \eqref{eq:nnew}.
It is essential to see from  Lemmas \ref{lem:derivative-H} and \ref{lem:derivative-J}
that the main nonlinear terms are cancelling. Indeed, by integration by parts,
\begin{multline*}
-\Re\int_{\R^d} (f(P_b+\eps)-f(P_b)) \Lambda \bar\eps dy\\
=
-\frac d2\Re\int_{\R^d} (f(P_b+\eps)-f(P_b)) \bar\eps dy
-\Re\int_{\R^d} y\nabla(F(P_b+\eps)-F(P_b)-dF(P_b)\eps)
 dy\\+
\Re\int_{\R^d} (f(P_b+\eps)-f(P_b)-df(P_b)\eps)y\nabla\bar P_bdy
\\
=
-\frac d2\Re\int_{\R^d} (f(P_b+\eps)-f(P_b)) \bar\eps dy
+d\,\Re\int_{\R^d} (F(P_b+\eps)-F(P_b)-dF(P_b)\eps)
 dy\\+
\Re\int_{\R^d} (f(P_b+\eps)-f(P_b)-df(P_b)\eps)y\nabla\bar P_bdy,
\end{multline*}
 \begin{multline*}
-\Re\int_{\R^d} (f(P_b+\eps)-f(P_b)) \left(\frac12\Delta\phi_A\bar\eps+\nabla\phi_A\nabla \bar\eps\right)dy\\=
-\frac12\Re\int_{\R^d} (f(P_b+\eps)-f(P_b)) \Delta\phi_A\bar\eps dy
+\Re\int_{\R^d} \Delta\phi_A(F(P_b+\eps)-F(P_b)-dF(P_b)\eps)
 dy\\+
\Re\int_{\R^d} (f(P_b+\eps)-f(P_b)-df(P_b)\eps)
\nabla\phi_A\nabla \bar P_b dy.
\end{multline*}
Writing these two terms as above, it becomes clear that when $y$ or $\nabla \phi_A$ appear, they are multiplied by $\nabla P_b$, which is exponentially decaying in space (see Proposition \ref{prop:profile}). Therefore, such terms are controlled by expressions involving only $\|\eps\|_{H^1}$.\medskip

Therefore, combining Lemma \ref{lem:derivative-H} and Lemma \ref{lem:derivative-J}, we have 
\begin{multline*}
\frac{d}{d s} [H(s,\eps(s))]
+b\frac{d}{d s}[J(\eps(s))] =b\int_{\R^d}\phantom{}\nabla\eps^T   \nabla^2\phi_A\nabla\bar\eps dy
-
b\norm{\nabla\eps}_2^2
\\
+
\left(b+\frac{\lambda_s}{\lambda}\right) \Bigg(\norm{\nabla\eps}_2^2
-\frac d2\Re\int_{\R^d} (f(P_b+\eps)-f(P_b)) \bar\eps dy
+d\, \Re\int_{\R^d} (F(P_b+\eps)-F(P_b)-dF(P_b)\eps) dy
\\
+\Re\int_{\R^d} (f(P_b+\eps)-f(P_b)-df(P_b)\eps)y\nabla \bar P_b dy\Bigg)
\\
+b\Bigg(-\frac12\Re\int_{\R^d} (f(P_b+\eps)-f(P_b)) (\Delta \phi_A-d)\bar\eps dy+
\Re\int_{\R^d} (F(P_b+\eps)-F(P_b)-dF(P_b)\eps)(\Delta\phi_A-d) dy\\
+\Re\int_{\R^d} (f(P_b+\eps)-f(P_b)-df(P_b)\eps)(\nabla\phi_A-y)\nabla \bar P_b dy\Bigg)
\\
-b\frac14\int_{\R^d}|\eps|^2\Delta^2\phi_Ady +O(s^{-(2K+3)})+O(s^{-2}\norm{\eps}_{H^1}^2).
\end{multline*}
By $\left|b+\frac{\lambda_s}{\lambda}\right|\lesssim O(s^{-4})$, we have
\begin{align*}
\left|\left(b+\frac{\lambda_s}{\lambda}\right) \Bigg(\norm{\nabla\eps}_2^2
-\frac d2\Re\int_{\R^d} (f(P_b+\eps)-f(P_b)) \bar\eps dy
+d\, \Re\int_{\R^d} (F(P_b+\eps)-F(P_b)-dF(P_b)\eps) dy\right.
\\ \left.
+\Re\int_{\R^d} (f(P_b+\eps)-f(P_b)-df(P_b)\eps)y\nabla \bar P_b dy\Bigg)\right|\lesssim
s^{-4} \|\eps\|_{H^1}^{2}.
\end{align*}
Next,
\begin{multline*}
|b|  \left|-\frac12\Re\int_{\R^d} (f(P_b+\eps)-f(P_b)) \Delta(\phi_A-d)\bar\eps dy\right|\\
\lesssim \frac 1s\int_{\R^d}\left|| P|^{\frac{4}{d}}|\eps|^2|\Delta\phi_A-d|+|\eps|^{2+\frac{4}{d}} \right|dy
\lesssim \frac{e^{-\frac A2}}s \norm{\eps}_2^2+O\left(s^{-1}\norm{\eps}_{H^1}^{2+\frac 4d}\right),
\end{multline*}
and similarly for $b\Re\int_{\R^d} (F(P_b+\eps)-F(P_b)-dF(P_b)\eps)(\Delta\phi_A-d) dy $ 
and $b \Re\int_{\R^d} (f(P_b+\eps)-f(P_b)-df(P_b)\eps)(\nabla\phi_A-y)\nabla \bar P_b dy$.
Next,
\[
\left|-b \int_{\R^d}|\eps|^2\Delta^2\phi_Ady\right|\lesssim \frac{b}{A^2}\norm{\eps}_2^2.
\]
In conclusion for this term, we have obtained \eqref{eq:nnew}

Using $-\frac{\lambda_s}{\lambda} = b + O(s^{-2})$
 and  the expression of $H$ we have 
\begin{align*}
&-4\frac{\lambda_s}{\lambda} H(s,\eps) + \frac{d}{d s} [H(s,\eps(s))]
+b\frac{d}{d s}[J(\eps(s))]\\
&\gtrsim
4 b H(s,\eps) +b\int_{\R^d}\phantom{}\nabla\eps^T   \nabla^2\phi_A\nabla\bar\eps dy-b\norm{\nabla\eps}_2^2+O(s^{-2}\|\eps\|_{H^1}^2)+
\frac{b}{A}O(\norm{\eps}_{H^1}^2)+  O(s^{-(2K+3)})
\\&\gtrsim
 b\left( \int_{\R^d}\phantom{}\nabla\eps^T   \nabla^2\phi_A\nabla\bar\eps dy+\norm{\eps}_2^2-2\int_{\R^d} (F(P_b+\eps)-F(P_b)-dF(P_b)\eps)dx\right)
\\
&\qquad +2 bH(s,\eps)+
\frac{b}{A}O(\norm{\eps}_{H^1}^2)+O(s^{-(2K+3)})\end{align*}
Thus, and the coercivity properties Lemma \ref{lem:coercivity-of-H} and Lemma \ref{le:9bis}, we obtain (for $A$ large enough)
\begin{align*}
-4\frac{\lambda_s}{\lambda} H(s,\eps) + \frac{d}{d s} [H(s,\eps(s))]
+b\frac{d}{d s}[J(\eps(s))]
\gtrsim b \|\eps\|_{H^1}^2 +O(s^{-(2K+3)}).
\end{align*}
Since $b=O(s^{-1})$, $b_s=O(s^{-2})$ and $J(\eps)=O(\norm{\eps}_{H^1}^2)$, we have
$$\left( \left|\frac {\lambda_s}{\lambda}\right| b +|b_s|\right) |J(\eps)|\lesssim 
s^{-2} O(\|\eps\|_{H^1}^2)$$
and thus 
\[
\frac{d}{d s}[S(s,\eps(s))]\gtrsim
\frac{b}{\lambda^{4}}\left(\norm{\eps}_{H^1}^2+O(s^{-(2K+2)})\right).
\]
This finishes the proof.
\end{proof}

\section{End of the proof of Proposition \ref{prop:uniform-estimates-rescaled-time}}

In this section, we finish the proof of Proposition \ref{prop:uniform-estimates-rescaled-time}.
Recall from \S \ref{sec:4:3} that our objective is to prove $s_*=s_0$ by improving estimates \eqref{eq:smallness-4-s} into \eqref{eq:smallness-1-s}. Therefore, it is sufficient to prove the following lemma which closes the bounds \eqref{eq:smallness-4-s} provided $\delta(\alpha)>0$ has been chosen small enough (e.g. as in \eqref{deltaalpha}).

 \begin{lemma}[Refined estimates]\label{lem:bootstrap}  
For all $s\in[s_*,s_1]$,  
\begin{align}
& \norm{\eps(s)}_{H^1}\lesssim s^{-(K+1)},\label{eq:4s}\\
&\left|\frac{\lambda^{\frac\alpha 2}(s)}{\lambda_{\rm app}^{\frac\alpha 2}(s)}-1\right|+
\left|\frac{b(s)}{b_{\rm app}(s)}-1\right|  
\lesssim s^{-\frac 12}+s^{2-\frac 4\alpha}.\label{eq:6s}
\end{align}
\end{lemma}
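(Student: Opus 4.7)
To close the bootstrap, I would improve \eqref{eq:4s} and \eqref{eq:6s} sequentially by essentially independent mechanisms: the $\eps$-bound by backward integration of the mixed Energy/Morawetz Lyapunov functional $S$, and the parameter bounds by combining energy conservation with an ODE analysis built around $\mathcal F$.

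\emph{Improving \eqref{eq:4s}.} Since $\eps(s_1)\equiv 0$, we have $H(s_1,0)=J(0)=0$, hence $S(s_1,\eps(s_1))=0$. Integrating the Lyapunov inequality of Proposition~\ref{prop:Lyapunov} backwards from $s_1$ yields
\[
S(s,\eps(s)) \lesssim \int_s^{s_1} \frac{b(\sigma)}{\lambda^4(\sigma)}\sigma^{-2(K+1)}\,d\sigma.
\]
The bootstrap \eqref{eq:smallness-4-s} gives $b/\lambda^4\lesssim\sigma^{8/\alpha-1}$, so the integrand is $\lesssim\sigma^{8/\alpha-2K-3}$; since $K>20/\alpha$ this exponent is well below $-1$ and the integral is bounded by $s^{8/\alpha-2K-2}$. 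The coercivity estimate of Proposition~\ref{prop:coercivity-of-S} then yields
\[
\|\eps(s)\|_{H^1}^2 \lesssim \lambda^4(s)\,S(s,\eps(s)) + s^{-2(K+1)} \lesssim s^{-2(K+1)},
\]
which is \eqref{eq:4s}.

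\emph{Improving \eqref{eq:6s}.} The idea is to pin $(b(s),\lambda(s))$ to the level set $\{\mathcal E=C_0\}$ via the energy, and then integrate the resulting ODE backwards. The choice of $(b_1,\lambda_1)$ in Lemma~\ref{le:bu} combined with \eqref{eener} gives $E(P_{b,\lambda,\gamma}(s_1))=E_0+O(s_1^{4/\alpha-2(K+2)})$. Integrating \eqref{dener} backwards with $|\Mod(s)|\lesssim s^{-(K+2)}$ from Lemma~\ref{lem:estimate-modulation-equations} propagates this to $E(P_{b,\lambda,\gamma}(s))=E_0+O(s^{4/\alpha-K-1})$, and inverting \eqref{eener} produces
\[
\mathcal E(b(s),\lambda(s))=C_0+O(s^{4/\alpha-K-1}).
\]
The higher-order terms of \eqref{defee} being $O(s^{-2})$ relative to $\tfrac{2\beta}{2-\alpha}$, this identity can be solved as $b=\lambda^{\alpha/2}\sqrt{\tfrac{2\beta}{2-\alpha}+C_0\lambda^{2-\alpha}+O(s^{-2})}$. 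Substituting into $\lambda_s/\lambda=-b+O(s^{-(K+2)})$ and recognizing $\mathcal F'(\lambda)$ from \eqref{defmF}, we obtain $\frac{d}{ds}\mathcal F(\lambda(s))=1+O(s^{-2})$. Integrating backwards from the initial condition $\mathcal F(\lambda(s_1))=\mathcal F(\lambda_1)=s_1$ yields $\mathcal F(\lambda(s))=s+O(s^{-1})$. Feeding this into the non-sharp expansion \eqref{using} of $\mathcal F$ and comparing with $\lambda_{\rm app}^{-\alpha/2}(s)=\tfrac{\alpha}{2}\sqrt{\tfrac{2\beta}{2-\alpha}}\,s$ extracts $\bigl|\lambda^{\alpha/2}/\lambda_{\rm app}^{\alpha/2}-1\bigr|\lesssim s^{-1/2}+s^{2-4/\alpha}$; the analogous estimate for $b/b_{\rm app}$ follows from the square-root formula together with $\lambda^{2-\alpha}\sim s^{2-4/\alpha}$.

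\emph{Main obstacle.} The delicate point is this last extraction: the rates $s^{-1/2}$ and $s^{2-4/\alpha}$ appearing in \eqref{eq:6s} are not governed by the modulation error (of order $s^{-(K+2)}$, essentially negligible) but by the intrinsic corrections $\lambda^{-\alpha/4}$ and $\lambda^{2-3\alpha/2}$ in the asymptotic expansion \eqref{using} of $\mathcal F$ near $\lambda=0$. The choice $\delta(\alpha)=\min(1/4,2/\alpha-1)$ in \eqref{deltaalpha} is tailored precisely to these two exponents, so that $s^{-\delta(\alpha)}$ strictly dominates $s^{-1/2}+s^{2-4/\alpha}$ for $s\geq s_0$ large, closing the bootstrap and yielding $s_*=s_0$ by standard continuity.
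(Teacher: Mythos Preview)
Your argument is correct, and for \eqref{eq:6s} it is essentially identical to the paper's: both establish $\mathcal E(b,\lambda)=C_0+O(s^{-6})$ via \eqref{dener} and \eqref{eener}, solve for $b$ as a square root, recognize $\frac{d}{ds}\mathcal F(\lambda(s))=1+O(s^{-2})$, integrate backwards from $\mathcal F(\lambda_1)=s_1$, and feed the result into \eqref{using}.

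For \eqref{eq:4s} your route differs from the paper's. The paper argues by contradiction with two nested stopping times $s_\dagger<s_\ddagger$: on the interval where $\|\eps\|_{H^1}\geq \kappa\tau^{-(K+1)}$, the full Lyapunov inequality makes $S$ nondecreasing, and comparing $S(s_\dagger)\leq S(s_\ddagger)$ through the coercivity sandwich \eqref{eq:bootstrap-1} yields a contradiction. You instead discard the favorable $\|\eps\|_{H^1}^2$ term in Proposition~\ref{prop:Lyapunov} to get a one-sided differential inequality $\frac{d}{ds}S\geq -C\frac{b}{\lambda^4}s^{-2(K+1)}$, integrate it directly using $S(s_1)=0$, and conclude via coercivity. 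Your argument is shorter and more transparent; the paper's stopping-time scheme is slightly more robust (it would tolerate $S(s_1)\neq 0$ small, and does not require the explicit size of $b/\lambda^4$), but in this setting both work equally well.
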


\begin{proof} First, we prove \eqref{eq:4s}.
From  Proposition \ref{prop:coercivity-of-S},  and the expression of $S$, there exists a universal constant $\kappa >1$ such that for any $s\in[s_*,s_1]$,
\begin{equation}\label{eq:bootstrap-1}
\frac{1}{\kappa}\frac 1{\lambda^{4}}\left(\norm{\eps}_{H^1}^2-\kappa^2 s^{-2(K+1)}\right)\leq  S(s,\eps) \leq \frac\kappa{\lambda^{4}} \norm{\eps}_{H^1}^2.
\end{equation}
From Proposition \ref{prop:Lyapunov}, possibly taking a larger $\kappa$,
\begin{equation}\label{eq:bootstrap-2}
\frac{d}{d s}[S(s,\eps(s))]\geq \frac{1}{\kappa} \frac{b}{\lambda^{4}}\left(\norm{\eps}_{H^1}^2-\kappa^2s^{-2(K+1)}\right).
\end{equation}
Define 
\[
s_\dagger:=\inf\{ s\in[s_*,s_1],\quad \norm{\eps(\tau)}_{H^1}\leq   2\kappa^2\tau^{-(K+1)}\quad\text{for all }\tau\in[s,s_1] \}.
\]
Since $\eps(s_1)=0$, by continuity $s_\dagger$ is well-defined and  $s_\dagger<s_1$. For the sake of contradiction, assume  that $s_\dagger>s_*$. In particular, $\norm{\eps(s_\dagger)}_{H^1}=  2 \kappa^2 s_\dagger^{-(K+1)}$. Define 
\[
s_\ddagger:=\sup\{ s\in[s_\dagger,s_1],\quad \norm{\eps(\tau)}_{H^1}\geq  \kappa\tau^{-(K+1)}\quad\text{for all }\tau\in[ s_\dagger,s] \}.
\]
In particular, $s_\dagger<s_\ddagger<s_1$ and $\norm{\eps(s_\ddagger)}_{H^1}=\kappa s_\ddagger^{-(K+1)}$, and from \eqref{eq:bootstrap-2}, $S$ is nondecreasing on $[s_\dagger,s_\ddagger]$.
From equations \eqref{eq:bootstrap-1}-\eqref{eq:bootstrap-2} and the estimates on $\lambda$ (see \eqref{eq:smallness-1-s}),   we obtain 
\begin{multline*}
\norm{\eps(s_\dagger)}_{H^1}^2-\kappa^2 s_\dagger^{-2(K+1)}
 \leq 
\kappa\lambda^{4}(s_\dagger)S(s_\dagger,\eps(s_\dagger))
 \leq
\kappa\lambda^{4}(s_\dagger)S(s_\ddagger,\eps(s_\ddagger))\\
\leq \kappa^2 \frac {\lambda^{4}(s_\dagger)}{\lambda^{4}(s_\ddagger)}\|\eps(s_\ddagger)\|_{H^1}^2
\leq
\kappa^4 \frac {\lambda^{4}(s_\dagger)}{\lambda^{4}(s_\ddagger)} s_\ddagger^{-2(K+1)}
\leq
2\kappa^4 \left(\frac{s_\ddagger}{s_\dagger}\right)^{\frac{8}{\alpha}}s_\ddagger^{-2(K+1)}
\leq
2 \kappa^4  s_\dagger^{-2(K+1)},
\end{multline*} 
since $K>4/\alpha$.
Therefore $\norm{\eps(s_\dagger)}_{H^1}^2\leq 3 \kappa^4 s_\dagger^{-2(K+1)}$,
which is a contradiction. Hence $s_\dagger=s_*$ and \eqref{eq:4s} is proved.

\bigskip

Now, we prove \eqref{eq:6s}. The main idea is to use a conservation law on $(b,\lambda)$
which can be found from the differential system satisfied by $(b,\lambda)$, but that we rather derive from energy properties of the blow up profile. Recall that
$\lambda(s_1)=\lambda_1$ and  $b(s_1)=b_1$ are chosen in Lemma \ref{le:bu} so that
$\mathcal F(\lambda(s_1)) = s_1$ and $\mathcal{E}(b(s_1),\lambda(s_1))=\frac{8E_0}{\int|y|^2Q^2}.$
In particular, we deduce from \eqref{eener} that
$|E(P_{b_1,\lambda_1,\gamma_1})-E_0|\lesssim s_1^{-6}$.
Using \eqref{dener} and \eqref{eq:smallness-4-s}, \eqref{eq:estimate-modulation}, for all $s\in[s_*,s_1]$,
$$
\left|\frac{d}{ds}E(P_{b,\lambda,\gamma})\right|\lesssim s^{-(K+2)+\frac{4}{\alpha}}.
$$ 
In particular, by integration, we find, for all $s\in[s_*,s_1]$,
$|E(P_{b,\lambda,\gamma}(s))-E_0|\lesssim s^{-6}$ (recall $K>20/\alpha$)
and using \eqref{eener} at $s$,
$$
\left|\mathcal{E}(b(s),\lambda(s))-\frac{8E_0}{\int |y|^2Q^2}\right|\lesssim s^{-6}.
$$
We obtain from the expression \eqref{defee} of $\mathcal E$ with $C_0=\frac{8E_0}{\int |y|^2Q^2}$:
$$
\left|b^2-\frac{2\beta}{2-\alpha}\lambda^\alpha-C_0\l^2\right|\lesssim \frac{\l^\alpha}{s^2}
$$
where the error term $O(\frac{\l^\alpha}{s^2})$ comes from $\theta$ and cannot be improved. 
In this estimate, since $\lambda^2 \approx s^{-\frac 4\alpha}$ and
$\frac{\l^\alpha}{s^2}\approx s^{-4}$, whether or not $C_0 \lambda^2$ is controled by the error term depends on the value of $\alpha$. We address both cases at once in what follows.
Since $b\approx \lambda^{\frac \alpha2}$, 
\be
\label{neovneonvoe}
\left|b-\sqrt{\frac{2\beta}{2-\alpha}\lambda^\alpha+C_0\l^2}\right|\lesssim \frac{\l^{\frac\alpha2}}{s^2},
\ee and with $\left|\frac{\lambda_s}{\lambda}+b\right|\lesssim s^{-(K+1)}$  , we obtain
(see \eqref{defmF} for the definition of $\mathcal F$)
\be
\label{eqlfmamd}
\left|\frac{\lambda_s}{\l^{\frac{\alpha}{2}+1}\sqrt{\frac{2\beta}{2-\alpha}+C_0\l^{2-\alpha}}}+1\right|
= \left|\mathcal F'(s) - 1 \right|\lesssim s^{-2}.
\ee
Integrating \eqref{eqlfmamd} on $[s,s_1]$, we obtain
$$
\left|\mathcal F(\lambda(s_1))- \mathcal F(\lambda(s)) - (s_1-s)\right|\lesssim s^{-1}
$$
and thus, by the choice $\mathcal F(\lambda(s_1)) = s_1$, we obtain
$$\mathcal F(\lambda(s)) = s + O( s^{-1}).$$
Therefore, using \eqref{using} and the definition of $\lambda_{\rm app}(s)$ in \eqref{eq:dbu},
$$
\left|\frac{\lambda_{\rm app}^{\frac \alpha 2}(s)}{\lambda^{\frac \alpha2}(s)}-1\right| \lesssim  
s^{-\frac 12}+ s^{2 - \frac 4{\alpha}}.$$
 We reinject this estimate into \eqref{neovneonvoe} and use the definition of $b_{\rm app}$ to conclude: 
$$ b(s)=b_{\rm app}(s)+O(s^{-\frac 32}+s^{-\frac{4-\alpha}{\alpha}}).$$
This finishes the proof.
\end{proof}

\begin{appendix}

\section{Proof of Lemma \ref{lemmathreshodl}}
\label{appendixa}
By contradiction, assume that there exists a blow up solution $u(t)$ of \eqref{eq:nlseps} with $\epsilon=-1$ and $\|u(t)\|_{2}=\|Q\|_{2}$. Let a sequence $t_n\to T^*\in (0,+\infty]$ with $\|\nabla u(t_n)\|_{2}\to +\infty$ and consider the renormalized sequence 
$$
v_n(x)=\l(t_n)^{\frac d2}u(t_n,\l (t_n)x), \ \ \l(t_n)=\frac{\|\nabla Q\|_{2}}{\|\nabla u(t_n)\|_{2}}.
$$
Then, by conservation of mass, $$\|v_n\|_{2}=\|Q\|_{2}$$ and   conservation of energy and $\epsilon<0$,
$$
E_0=E(u_n)\geq E_{\rm crit}(u_n)=\frac{E_{\rm crit}(v_n)}{\l^2(t_n)}.
$$
Therefore, the sequence $v_n$ satisfies: 
$$
\|v_n\|_{2}=\|Q\|_{2}, \ \ \|\nabla v_n\|_{2}=\|\nabla Q\|_{2}, \ \ \limsup_{n\to +\infty} E_{\rm crit}(v_n)\leq 0.
$$ 
From standard concentration compactness argument, see \cite{MeRa05,We83}, there holds, up to a subsequence, for some  $x_n\in \R^d, \gamma_n\in \R$, 
$$
v_n(.-x_n)e^{i\gamma_n}\mathop{\to}_{n\to +\infty} Q\ \ \mbox{in} \ \ H^1(\R^d).
$$ 
In particular, 
$$ 
\|u(t_n)\|_{{p+1}}=\frac{\|v_n\|_{{p+1}}}{\l^{\frac{d(p-1)}{2(p+1)}}(t_n)}\to +\infty\ \ \mbox{as}\ \ n\to \infty,
$$ 
which contradicts the a priori bound from the energy conservation law and \eqref{gagenergy}: 
$$
E_0=E(u)\geq E_{\rm crit}(u)+\frac{1}{
p+1}\int |u|^{p+1}\geq \frac{1}{p+1}\int |u|^{p+1}.
$$

\section{Proof of Lemma \ref{smallsolitary}}
\label{appendixb}
For the sake of simplicity, we give the proof only for $d\geq 2$. The case $d=1$ would require an additionnal (standard) concentration compactness argument (see \cite{We83}). For $M<\|Q\|_{2}$, set $$A_M=\{u\in H^1_{\rm rad}(\R^d)\ \ \mbox{with}\ \ \|u\|_{2}=M\}$$  and consider the minimization problem $$I_M=\inf_{u\in A_M}E(u).$$ 
First, we claim
\be
\label{signim}
-\infty<I_M<0.
\ee
Indeed, from \fref{gagenergy} and 
\begin{equation}\label{GNbis}
\int |u|^{p+1} \leq C_{\rm GN}(p)\norm{\nabla u}_2^{\frac{d(p-1)}{2}}\norm{ u}_2^{p+1-\frac{d(p-1)}{2}},
\end{equation} with $1<p<1+\frac 4d$, we note that
 $I_M>-\infty$  and that any minimizing sequence is bounded in $H^1(\R^d)$.
 Let $u\in A_M$ and $v_{\lambda}(x)=\l^{\frac d2}u(\l x),$ then $v_\l\in A_M$ and $$E(v_\l)=\l^2\left[E_{\rm crit}(u)-\frac 1 {\l^{2-\frac{d(p-1)}{2}}}\frac{1}{p+1}\int|u|^{p+1}\right].$$ 
 In particular, for $0<\l\ll1 $ and $u\not \equiv0$, $E(v_\l)<0$ and \eqref{signim} follows.
 \medskip

Second, let $u_\l=\l^{\frac 2{p-1}}u(\l x)$, so that $$E(u_\l)=\l^{\frac{4}{p-1}+2-d}\left[\frac 12\int|\nabla u|^2-\frac1{p+1}\int |u|^{p+1}\right]-\frac{\l^{\frac{2}{p-1}(2+\frac{4}{d})-d}}{2+\frac 4d}\int |u|^{2+\frac4d}.$$ 
We observe that
\bee
\frac{d}{d\l}E(u_\l)_{|\lambda=1}&= & \left(\frac{4}{p-1}+2-d\right)\left[\frac 12\int|\nabla u|^2-\frac1{p+1}\int |u|^{p+1}\right]-\frac{\frac{2}{p-1}(2+\frac{4}{d})-d}{2+\frac 4d}\int |u|^{2+\frac4d}\\
& = &  \left(\frac{4}{p-1}+2-d\right)E(u)-\frac{\frac 4d}{2+\frac 4d}\left(\frac 2{p-1}-\frac d2\right)\int |u|^{2+\frac4d}.
\eee
Together with $\|u_\l\|_{2}=\l^{\frac{2}{p-1}-\frac{d}{2}}\|u\|_{2}$, which implies
$\frac{d}{d\l}{\|u_\l\|_{2}}_{|\lambda=1}>0$, this proves   that
\be
\label{monotonie}
I(M)\text{ is decreasing in }M.
\ee

To finish, let $(u_n)$ be a minimizing sequence. Up to a subsequence and from the standard radial compactness of Sobolev embeddings (see \cite{BL83}) $$u_n\rightharpoonup u\ \ \mbox{in}\ \ H^1(\R^d), \ \ u_n\to u\ \ \mbox{in}\ \ L^q, \ \ 2<q\leq 2+\frac 4d.$$ Hence $$ E(u)\leq I_M\quad \hbox{and} \quad \|u\|_{2}\leq M.$$ From \eqref{monotonie} and the definition of $I_M$, we deduce $\|u\|_{2}=M$ and $E(u)=I_M$. From a standard Lagrange multiplier argument, $u$ satisfies $$\Delta u+|u|^{1+\frac{4}{d}}u+|u|^{p-1}u=\omega u$$
for a constant $\omega\in \R$. The sign $\omega>0$ now follows from a standard  Pohozaev type argument.
\end{appendix}


\def\cprime{$'$}

\end{document}